\newtheorem{thm}{Theorem}[section]
\newtheorem{prop}{Proposition}[section]
\newtheorem{lem}{Lemma}[section]
\newcommand{\R}{\mathbb{R}}
\numberwithin{equation}{section}
\newcommand{\N}{\mathbb{N}}
\newcommand{\be}{\begin{equation}}
\newcommand{\ee}{\end{equation}}
\newcommand{\e}{\varepsilon}
\newcounter{exercice}
\DeclareMathOperator{\Span}{span}
\DeclareMathOperator{\sign}{sign}
\newcommandx{\huom}[2][1=]{\todo[linecolor=red,backgroundcolor=red!10,bordercolor=red,#1]{#2}}
\begin{document}



\title[4NLS]{Construction of $2$-bubbles for the energy critical bi-harmonic Schr\" odinger equation}
\author[Jean-Baptiste Casteras]{Jean-Baptiste Casteras} \thanks{Corresponding author : Jean-Baptiste Casteras}
\address{CMAFCIO, Faculdade de Ci\^encias da Universidade de Lisboa, Edificio C6, Piso 1, Campo Grande 1749-016 Lisboa, Portugal}
\email{jeanbaptiste.casteras@gmail.com}

\author[Ilkka Holopainen]{Ilkka Holopainen}
\address{Department of Mathematics and Statistics, P.O. Box 68, 00014 University of
Helsinki, Finland}
\email{ilkka.holopainen@helsinki.fi}
\author[L\' eonard Monsaingeon]{L\' eonard Monsaingeon}
\address{GFM, Faculdade de Ci\^encias da Universidade de Lisboa, Edificio C6, Piso 1, Campo Grande 1749-016 Lisboa, Portugal and  IECL Universit\' e de Lorraine, F-54506, Vandoeuvre-l\`es-Nancy Cedex, France.}
\email{leonard.monsaingeon@univ-lorraine.fr}
\thanks{J.-B.C. supported by FCT - Funda\c c\~ao para a Ci\^encia e a Tecnologia, under the project: UIDB/04561/2020; I.H. supported by the Magnus Ehrnrooth foundation;  L.M. was funded by the Portuguese Science Foundation through a personal grant 2020/00162/CEECIND as well as the FCT project PTDC/MAT-STA/28812/2017}
\subjclass[2000]{35Q55, 35J30, 35B40}
\keywords{Nonlinear fourth-order Schr\"odinger equation, energy critical, two-bubble solution}

\begin{abstract}
We construct a blowing-up solution for the energy critical focusing biharmonic nonlinear Schr\"odinger equation in infinite time in dimension $N\geq 13$. Our solution is radially symmetric and converges asymptotically to the sum of two bubbles. The scale of one of the bubble is of order $1$ whereas the other one is of order $|t|^{-\frac{2}{N-12}}$. Moreover, the phase between the two bubbles form a right angle.
\end{abstract}

\maketitle
\section{Introduction}
In this paper, we are interested in the following fourth order Schr\"odinger equation
\begin{equation}
\label{eqintro}
i\partial_t u -\Delta^2 u +\alpha \Delta u + |u|^{p} u=0,\text{ in } \R^N,
\end{equation}
where $N>4$, $\alpha \in \R$ and $0<p \leq 2^\ast -2 = \frac{8}{N-4}$.
The fourth order term in equation \eqref{eqintro} has been introduced by Karpman and
Shagalov \cite{MR1779828}. It allows to regularize and stabilize solutions to the classical
Schr\"odinger equation as observed through numerical simulations by Fibich, Ilan,
and Papanicolaou \cite{MR1898529}. We mention briefly a few selected results for \eqref{eq}.
Well-posedness has been established by Pausader \cite{MR2505703,MR2502523} (using the dispersive estimates of
\cite{MR1745182}) as well as some scattering results (we also refer to \cite{MR3462127} and to \cite{CH} for related results on Cartan-Hadamard manifolds). Recently, Boulenger and Lenzmann obtained (in)finite time blowing-up results in \cite{bou-lenz}. Standing wave, $u(t,x)=e^{i\beta t} v(x)$ for some $\beta \in \R$, and traveling wave, $u(t,x)=e^{i \beta t } v(x+t V)$ with $V\in \R^N$, solutions to \eqref{eqintro} have been studied in \cite{MR3855391,MR3976588,MR3977892,CF}.  Since we will be interested in blowing-up solutions let us only mention that under suitable assumptions radially symmetric ground state solutions $U$ were proven to be unstable by blow-up in the mass critical or super-critical regime $p\geq 8/N$ in \cite{MR4001029}. Namely, for any $\varepsilon>0$, there exists $\tilde{U} \in H^2 (\R^N)$ such that $\|\tilde{U} -U\|_{H^2}<\varepsilon$ and the solution to \eqref{eqintro} with initial data $u(0)=\tilde{U}$ blows-up in the $H^2$-norm. This result is based on the local virial inequality proved by Boulenger and Lenzmann which will be stated below.

In the following, we will focus on the pure biharmonic  Schr\" odinger equation in the energy critical regime 
\begin{equation}
\label{eq}
i\partial_t u -\Delta^2 u + |u|^{2^\ast-2} u=0,\text{ in } \R^N.
\end{equation}
We will also only work with radially symmetric functions. Before proceeding let us recall that the energy 
$$E(u)=\dfrac{1}{2}\int_{\R^N} |\Delta u|^2 dx - \dfrac{1}{2^\ast}\int_{\R^N} |u|^{2^\ast} dx,$$
is a conserved quantity by equation \eqref{eq}. We define
$$v_\lambda (x)= \dfrac{1}{\lambda^{\frac{N-4}{2}}} v \big(\frac{x}{\lambda}\big).$$
Notice that, for any $u_0\in \dot{H}^2$, we have
$$E((u_0)_\lambda )= E(u_0).$$
Equation \eqref{eq} is invariant under the same scaling in the sense that if $u(t)$ is a solution to \eqref{eq} then $U(t)= u(t_0 + \frac{t}{\lambda^2})_\lambda$ is also a solution to \eqref{eq} such that $U(0)=(u_0)_\lambda$. Observe that $E$ is defined on $\dot{H}^2$ functions thanks to the Sobolev inequality, for some constant $C$ depending on $N$, there holds
$$\|u\|_{L^{2^\ast}} \leq C \|\Delta u\|_{L^2}.$$
The extremizer $W$ of this inequality will play a fundamental role in the following. It is known that it is unique (up to scaling and translation) in $\dot{H}^2$ and solves the equation
$$\Delta^2 W = |W|^{2^\ast-2}W\text{ in } \R^N.$$
In fact, it is explicitely given by
$$W(x)= \dfrac{C_N}{(1+|x|^2)^{\frac{N-4}{2}}},$$
with $C_N= (N(N-4)(N^2 -4))^{\frac{N-4}{8}}$. It is also known (see \cite{MR1694339}) that it is non-degenerate in the sense that the kernel of the operator
$$\Delta^2\cdot - \dfrac{N+4}{N-4}W^{\frac{8}{N-4}}\cdot $$
restricted to radial functions belonging to $\dot{H}^2$ is given by $\big((N-4)/2 +x\cdot\nabla\big)W$. In this setting, the result of Boulenger and Lenzmann \cite{bou-lenz} reads as follows: suppose that either $E[u_0]<0$ or $E[u_0]\geq 0$, $E[u_0]<E[W]$, and $\|\Delta u_0\|_{L^2} > \|\Delta W\|_{L^2}$, then the solution $u\in C^0([0,T); \dot{H}^2)$ blows-up in finite time in the sense $T<\infty$ and
$$\int_0^T \int_{\R^N} |u(t,x)|^{\frac{2(N+4)}{N-4}}dx dt =\infty.$$ 
Let us also recall that global well-posedness was established by Pausader \cite{MR2505703} for initial data $u_0$ satisfying $E[u_0]<E[W]$ and $\|\Delta u_0\|_{L^2} < \|\Delta W\|_{L^2}$. 

 Our goal in this paper is to construct a global radial solutions which converges in the energy space to a sum of two bubbles $W_{\lambda_i}$. We will see that one of the bubbles blows-up in the sense that $\lambda_1 (t)\rightarrow 0$, as $t\rightarrow \infty$, whereas the second one will not, i.e. $\lambda_2 (t) \rightarrow c$ for some constant $c>0$. More precisely, our result reads as follows:

\begin{thm}
\label{main}
Let $N\geq 13$. There exists a radially symmetric solution $u:(-\infty , T_0] \rightarrow \dot{H}^2 (\R^N ; \mathbb{C} )$ of \eqref{eq} such that
$$\lim_{t\rightarrow -\infty} \|u(t)- (-i W + W_{\tilde{C} |t|^{-\frac{2}{N-12}}})\|_{\dot{H}^2}=0,$$
where $\tilde{C}$ is an explicit constant; see \eqref{1deftildec}.
\end{thm}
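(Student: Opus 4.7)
My plan is to adapt to the biharmonic setting the matched-asymptotics/modulation approach developed by Martel--Raphael and Jendrej for the second-order focusing NLS and wave problems. I would build an accurate approximate solution of the form
\begin{equation*}
u_{\mathrm{app}}(t,x) \;=\; -iW(x) + W_{\lambda(t)}(x) + \sum_{j=1}^{K}\Phi_j(t,x),
\end{equation*}
where $\lambda(t)>0$ is a modulation parameter to be fixed by a solvability condition and the $\Phi_j$ are correction profiles that cancel the dominant residual term by term. The true solution is then sought as $u=u_{\mathrm{app}}+\varepsilon$ and produced on $(-\infty,T_0]$ by solving \eqref{eq} backwards on $[-T_n,T_0]$ with vanishing data at $t=-T_n$ and letting $T_n\to\infty$ through a compactness argument based on a uniform energy bound on $\varepsilon$.

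\textbf{Derivation of the modulation law and the constant $\tilde C$.} First I would plug the leading ansatz $-iW+W_\lambda$ into \eqref{eq}. Since each of $-iW$ and $W_\lambda$ solves the associated stationary equation, the residual reduces to
\begin{equation*}
-i\tfrac{\dot\lambda}{\lambda}(\Lambda W)_{\lambda} \;+\; \bigl(|u|^{2^{\ast}-2}u - |W|^{2^{\ast}-2}(-iW) - |W_{\lambda}|^{2^{\ast}-2}W_{\lambda}\bigr),
\end{equation*}
with $\Lambda := \tfrac{N-4}{2} + x\cdot\nabla$. The right-angle phase configuration is essential: expanding the nonlinearity as $(W^{2}+W_{\lambda}^{2})^{(2^{\ast}-2)/2}(W_{\lambda}-iW)$ and splitting into real and imaginary parts isolates an imaginary contribution which, after $L^{2}$-projection against $(\Lambda W)_{\lambda}$ and use of the concentration of $W_{\lambda}$, is of size $\lambda^{(N-4)/2}$, whereas the real part only enters at the next order. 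Setting the resonant imaginary projection to zero yields an ODE of the form
\begin{equation*}
\dot\lambda(t)\;\sim\;c_{\ast}\,\lambda(t)^{\frac{N-10}{2}},
\end{equation*}
whose decaying solution is precisely $\lambda(t)\sim\tilde C|t|^{-2/(N-12)}$, with $\tilde C$ determined explicitly from the ratio of the two universal integrals involved. The assumption $N\geq 13$ is needed both for the exponent $(N-10)/2$ to exceed $1$, so that $\lambda\to0$ as $t\to-\infty$, and for the concentration integrals entering the projection to converge.

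\textbf{Correction profiles and reduction to an error bound.} Next I would construct the $\Phi_{j}$ inductively as solutions of linear problems
\begin{equation*}
\mathcal L_{\lambda}\Phi_{j} \;=\; R_{j-1},
\qquad
\mathcal L_{\lambda} \;:=\; \Delta^{2} - \tfrac{N+4}{N-4}W_{\lambda}^{8/(N-4)},
\end{equation*}
splitting each equation into real and imaginary parts and enforcing orthogonality of $R_{j-1}$ to $(\Lambda W)_{\lambda}$ by a further adjustment of $\lambda(t)$; this is possible at each step thanks to the non-degeneracy of $W$ in the radial $\dot H^{2}$-class quoted in the introduction. After a finite, $N$-dependent number of iterations, the residual of $u_{\mathrm{app}}$ becomes polynomially small of arbitrarily high order in a suitable weighted $\dot H^{2}$-norm.

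\textbf{Main obstacle: closing the linearized equation.} The heart of the proof, and what I expect to be the main difficulty, is the energy estimate for the remainder $\varepsilon=u-u_{\mathrm{app}}$. It satisfies a linear Schr\"odinger-type equation with a non-self-adjoint potential involving both $W$ and $W_{\lambda}$ at two widely separated scales, plus a small antilinear piece arising from the complex conjugation produced by linearising $|u|^{2^{\ast}-2}u$ at a non-real profile. I would impose final orthogonality of $\varepsilon$ to $(\Lambda W)_{\lambda}$ by absorbing any resonant drift into a last modulation of $\lambda$, derive coercivity of the linearised Hamiltonian on that orthogonal subspace via the non-degeneracy of $W$, and bootstrap a time-weighted $\dot H^{2}$-energy estimate. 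The right-angle phase $-i$ plays a crucial role here too, since it gives the leading cross-terms in the quadratic form a definite sign and decouples real and imaginary components at leading order. Propagating the smallness of $\varepsilon$ from $T_{0}$ back to $-\infty$ against the loss due to the two-scale potential and the modulation terms is precisely where the dimension restriction $N\geq 13$ becomes necessary, and it will be the delicate point of the whole construction.
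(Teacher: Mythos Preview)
Your overall compactness strategy (solve on $[T_n,T_0]$ with well-prepared data at $T_n$, obtain uniform $\dot H^2$ bounds, pass to a weak limit) matches the paper, and your identification of the modulation law $\dot\lambda\sim c_\ast\lambda^{(N-10)/2}$ is correct. However, the route you outline differs from the paper's in an important way, and it contains a genuine gap.

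\textbf{Different route.} The paper does \emph{not} build an iterated profile expansion $\sum_j\Phi_j$. Instead it keeps the crude ansatz $e^{i\zeta}W_\mu+e^{i\theta}W_\lambda+g$ with \emph{four} modulation parameters $(\zeta,\mu,\theta,\lambda)$ and four orthogonality conditions on $g$, and closes the bootstrap by (i) energy coercivity of the second variation at the two-bubble (Lemma~\ref{lemener3}), and (ii) a localized virial correction $\psi(t)=\theta(t)-\tfrac{1}{4\|W\|_{L^2}^2}\langle g,iA_0(\lambda)g\rangle$ to absorb the dangerous term $K/\lambda^4$ in the equation for $\theta'$. The phase $\theta$ of the concentrating bubble is the paper's declared ``main difficulty''; in your scheme the phase is frozen at $0$ and this drift would have to be carried by $\varepsilon$, which is likely to prevent the bootstrap from closing at the rate $|t|^{-\frac{N-3}{2(N-12)}}$ needed.

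\textbf{The genuine gap.} Your proposal does not address the \emph{unstable eigendirections} of the linearized flow. Around each bubble the operator $Z_{\theta,\lambda}=-i\Delta^2+if'(e^{i\theta}W_\lambda)$ has a pair of eigenfunctions $\alpha^\pm_{\theta,\lambda}$ with eigenvalues $\pm\nu/\lambda^4$ (see \eqref{eigenvprel1}--\eqref{eigenvprel2}); the $+$ directions grow exponentially forward in time, and since $\lambda(t)\to 0$ the growth rate $\nu/\lambda^4$ is enormous near $-\infty$. No amount of improving the residual via higher profiles $\Phi_j$ cures this: the linear homogeneous flow itself amplifies any nonzero component of $\varepsilon$ along $\alpha^+$. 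Consequently ``vanishing data at $t=-T_n$'' does not work in general; one must \emph{choose} the initial data at $T_n$ so that the coefficients $a_1^+=\langle\alpha^+_{\zeta,\mu},g\rangle$ and $a_2^+=\langle\alpha^+_{\theta,\lambda},g\rangle$ stay $O(|t|^{-N/2(N-12)})$ on the whole interval. The paper achieves this via a shooting argument based on the Brouwer fixed-point theorem (Proposition~\ref{Proplast} and Lemma~\ref{4-10}): three free parameters $(\lambda^0,a_1^0,a_2^0)$ in the data are tuned so that the trajectory never exits a shrinking cube, and the existence of such a choice is a topological fact. Without this ingredient your bootstrap cannot close, regardless of how accurate $u_{\mathrm{app}}$ is.
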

 We expect the same kind of result to be valid if we add a 'negative' Laplacian term to the equation i.e. for \eqref{eqintro} with $p=2^\ast -2$ and $\alpha <0$. This second order term should in fact increase the blow-up speed. On the other hand, if $\alpha >0$, we expect the blow-up speed to decrease but this might as well prevent the blow-up to occur.
Theorem \ref{main} was first obtained for the classical energy critical nonlinear Schr\" odin\-ger equation by Jendrej \cite{JJ}. Our paper owes a lot to his article, in fact, the core method of our proof is the same as his.  
We also refer to \cite{MR3904767} where they constructed the same kind of solutions for the energy critical equivariant wave maps equation and the critical wave equation. We emphasize that Theorem~\ref{main} is the first constructive blow-up result for 4NLS. We conjecture that there is a certain rigidity in our construction: if we assume that $u:(-\infty , T_0] \rightarrow \dot{H}^2 (\R^N ; \mathbb{C} )$ is a radially symmetric solution of \eqref{eq} such that
$$\lim_{t\rightarrow -\infty} \|u(t)- (e^{i\theta_1 (t)} W_{\lambda_1 (t)} +e^{i\theta_2 (t)} W_{\lambda_2 (t)})\|_{\dot{H}^2}=0,$$   
then $\theta_1 (t) \rightarrow -\frac{\pi}{2}$, $\theta_2(t) \rightarrow 0$, $\lambda_1(t) \rightarrow 1$, and $\lambda_2 (t) |t|^{\frac{2}{N-12}} \rightarrow c$ for some constant $c$. A result in this direction has been obtained in \cite{MR3842064} for the wave maps equation. A related result for the critical wave equation has been obtained by Jendrej \cite{MR3801295}. He showed that there is no radial solution decomposing asymptotically as a sum of two bubbles with opposite signs.

Our result is also related to the so-called soliton resolution conjecture. Roughly speaking, a solution bounded in an appropriate sense should decomposes asymptotically into a sum of bubbles at different scales and a solution to the linear equation called a radiation term. This conjecture is completely open for the 
Schr\"odinger equation but it was proved to hold for the radial critical wave equation very recently; see \cite{JLsoli} (which generalizes previous works of Duyckaerts, Kenig and Merle \cite{DKMfirst,MR2972605,MR4163362,MR4289254,DKMActa} proving the conjecture in odd dimension as well as \cite{MR4397184} resp. \cite{CDKM} of the same authors along with Martel resp. Collot for the dimension 4 resp. 6). In relation to this conjecture, let us point out that a solution decomposing asymptotically in finite time as a sum of a radiation plus a bubble has been constructed in \cite{MR3579128} for the critical wave equation and in \cite{MR4458521} for the critical wave maps equation. Very interestingly, in this case, the  scale of the bubble depends on the behavior of the radiation at $0$. So it is possible by playing with the radiation to obtain a continuum of blowing-up speeds. We refer to \cite{MR2494455,MR2372807} and \cite{MR3114854} for related results using a different method based on distorted Fourier transforms. We also refer to \cite{CSZ,CoteDu,JM,Xu} for related recent works on multi-bubble dynamics.

As already said the proof of Theorem \ref{main} follows closely \cite{JJ} (see also \cite{raphael2011existence}). The solution $u$ will be obtained by taking a weak limit, as $T_n\to -\infty$, of a sequence $u_n :[T_n ,T_0]\rightarrow \dot{H}^2$ of solutions to \eqref{eq} 
close to a two-bubble solution. So our main goal is to obtain uniform energy estimate on the sequence $u_n$. 

We are looking for solution to \eqref{eq} of the form
$$u(t)= e^{i\zeta (t)}W_{\mu (t)}+ e^{i\theta (t)} W_{\lambda (t)} +g(t) ,$$
for some parameters $\zeta ,\mu ,\theta,\lambda$ and a function $g$ assumed to be small in $\dot{H}^2$ norm and satisfy suitable orthogonality conditions.  More precisely, we assume that $g$ is orthogonal to the kernel of 
\[
e^{i\zeta (t)}W_{\mu (t)}+ e^{i\theta (t)} W_{\lambda (t)},
\] 
that is, to $e^{i\zeta}W_\mu ,\ e^{i\theta }W_\lambda ,\ i\e^{i\zeta} \Lambda W_\mu$, and $ie^{i\theta} \Lambda W_\lambda$, where 
\[
\Lambda v:= -\dfrac{\partial}{\partial\lambda}\Big|_{\lambda=1} (v_\lambda).
\] 
To determine the value of the different parameters, we perform a modulation analysis, namely we differenciate with respect to time the orthogonality relations satisfied by $g$. This way, we obtain a system of four equations involving the four parameters we want to determine. Intuitively, we first determine $\lambda (t)$ (we assume that $\lambda (t) \ll \mu (t)$) such that $\lambda^\prime (t) \approx \lambda (t)^{\frac{N-4}{2}}$, i.e. $\lambda (t)\approx |t|^{-\frac{2}{N-12}} $. More precisely, we find that $\lambda(t)$ has to satisfy \eqref{mode8}. We point out that this relation involves the parameters of the two bubbles as well as the fact that their phases form a 'right angle' i.e. differ by a factor $i$. We already see at this point that the condition $N\geq 13$ appears. We should also be able to deal with the case $N=12$ but in this case, $\lambda(t)$ should decay exponentially like $e^{-\frac{\|W\|_{L^{(N+4)/(N-4)}}^{(N+4)/(N-4)}}{2\|W\|_{L^2}^2}t}$ (see \eqref{mode8}). In view of the estimate of the energy \eqref{propenere2}, we choose $|\zeta +\frac{\pi}{2}| \approx |\mu -1|\approx |\theta|^3$. From a technical point of view, we establish bounds on the parameters under some bootstrap assumptions. In order to close the estimates, we need to improve these bounds. The main difficulty is to improve the estimate of $g$. To do so, we use a method which consists in expanding the energy of $E(u)$ and using the coercivity of the energy near a bubble. The main step to implement this strategy is to estimate $\theta (t)$. Thanks to the modulation analysis, we see that $\theta$ satisfies 
\[
\theta^\prime (t) + \frac{C_2}{2\|W\|_{L^2}^2} \theta (t)  \lambda (t)^{\frac{N-12}{2}} - \dfrac{K(t)}{2 \lambda(t)^4 \|W\|_{L^2}^2} \approx 0 
\] 
for some explicit $K$ satisfying $|K|\lesssim \|g\|_{\dot{H}^1}^2$ (see \eqref{mode9}). To close the estimate on $\theta$, we need to get rid of the term 
\[
\dfrac{K(t)}{2 \lambda(t)^4 \|W\|_{L^2}^2}.
\]
 This is done as in \cite{JJ} (see also \cite{raphael2011existence}) by introducing a localized virial correction to $\theta (t)$. Such virial was already used by Boulenger and Lenzmann to prove their blow-up results. Finally, we use the Brouwer fixed point theorem as in \cite{CMM} to deal with the linear instabilities of the flow i.e. the terms $a_i^\pm$; see Lemma \ref{lemmodeai}.

\section{Preliminaries}
In this section, we introduce notation and recall some well-known facts. We begin by recalling the definition of the energy functional
$$E(u)= \dfrac{1}{2}\int_{\R^N} |\Delta u|^2 dx - \int_{\R^N} F(u) dx,$$
where we set $F(z)= \dfrac{N-4}{2N} |z|^{\frac{2N}{N-4}}$. We also let $f(z)= |z|^{\frac{8}{N-4}} z$. A direct computation gives that
 $$f^\prime (z)z_1 = |z|^{\frac{8}{N-4}} \big(z_1 + \frac{8}{N-4} z \mathcal{R} (\frac{z_1}{z})\big).$$

  We denote by $W$ the unique (up to scaling) $\dot{H}^2 (\R^N)$ solution to $\Delta^2 u =f(u) $. It is given explicitely by
$$W=C_N \left(\dfrac{1}{1+|x|^2}\right)^{\frac{N-4}{2}},$$
where $C_N=\big(N(N-4)(N^2-4)\big)^{\frac{N-4}{8}}$. We denote by $u_\lambda$ the scaling leaving invariant this equation namely, for any function $u\in \dot{H}^2$,
$$u_\lambda (x)= \frac{1}{\lambda^{(N-4)/2}} u(\frac{x}{\lambda}).$$
We set
$$\Lambda v = - \frac{\partial}{\partial \lambda}\Big|_{\lambda =1} v_\lambda = \left(\frac{N-4}{2} +x\cdot\nabla \right) v.$$
More generally, we define
\[
\Lambda_s:=\frac{N}{2}-s+x\cdot\nabla
\]
for $s\in\R$. Hence $\Lambda=\Lambda_2$ above.
Using the result of \cite{MR1694339} ($W$ is non-degenerate), we deduce that
$$\langle g, L^- g  \rangle \geq 0,\ \ker L^- =\Span (W),$$
and
$$\ker L^+ = \Span (\Lambda W),$$
where
$$L^+ = \Delta^2  - \frac{N+4}{N-4} W^{\frac{8}{N-4}} \text{ and } L^- = \Delta^2 - W^{\frac{8}{N-4}}. $$

We denote the standard complex $L^2$ scalar product by
$$\langle v,w\rangle = \mathcal{R} \int_{\R^N} \overline{v(x)} w(x) dx$$
 for any $v,w\in L^2 (\R^N; \mathbb{C})$.
We will also denote by $\dot{H}^2_{rad}(\R^N)= \mathcal{E}$ the space of radial function belong to $\dot{H}^2 (\R^N)$. We equipped it with the usual norm
$$\|u\|_{\mathcal{E}}^2=\int_{\R^N} |\Delta u|^2 dx. $$

Let $Z_{\theta ,\lambda}$ be the linearisation of our equation at the function $e^{i\theta} W_\lambda$ multiplied by $-i$, namely
$$Z_{\theta ,\lambda}=-i \Delta^2 +i f^\prime (e^{i\theta} W_\lambda ).$$
Denoting $g_1 = \mathcal{R} g$ and $g_2 =\mathcal{I} g$, we see that
$$Z_{\theta ,\lambda }(e^{i\theta} g_\lambda)= \frac{e^{i\theta}}{\lambda^4}(L^- g_2 - i L^+ g_1)_\lambda .$$
Using this last formula and recalling that $W$ (resp. $\Lambda W$) is in the kernel of $L^-$ (resp. $L^+$), we get that
$$Z_{\theta ,\lambda} (ie^{i\theta} W_\lambda )= \frac{e^{i\theta}}{\lambda^4} (L^- W)_\lambda =0$$
and
$$Z_{\theta ,\lambda} (e^{i\theta} \Lambda W_\lambda) = \frac{e^{i\theta}}{\lambda^4} (-i L^+ \Lambda W)_\lambda =0. $$
Integrating by parts, we also have that
\begin{align*}
&\langle e^{i\theta} W_\lambda , Z_{\theta ,\lambda} (e^{i\theta} g_\lambda)\rangle = \langle e^{i\theta} W_\lambda ,  \frac{e^{i\theta}}{\lambda^4}(L^- g_2 - i L^+ g_1)_\lambda \rangle \\
& =\langle W,L^- g_2\rangle = \langle L^- W ,g_2 \rangle =0
\end{align*}
and
\begin{align}
\label{ortZ}
\langle ie^{i\theta} \Lambda W_\lambda , Z_{\theta ,\lambda} (e^{i\theta} g_\lambda)\rangle &= \langle i  e^{i\theta} \Lambda W_\lambda ,  \frac{e^{i\theta}}{\lambda^4}(L^- g_2 - i L^+ g_1)_\lambda \rangle  \nonumber\\
&=-\langle \Lambda W,L^+ g_1\rangle = -\langle L^+ \Lambda W ,g_1 \rangle =0.
\end{align}
Adapting the proof of \cite{MR2491692} (see section $7$), we can prove that there exist $Y^{(1)} , Y^{(2)} \in S$ (the space of Schwartz functions) and $\nu >0$ such that
\begin{equation}
\label{defYs}
L^+ Y^{(1)} =-\nu Y^{(2)} ,\ L^- Y^{(2)} =\nu Y^{(1)} .
\end{equation}
We can assume that $\|Y^{(1)}\|_{L^2} =\|Y^{(2)}\|_{L^2}=1 $. Next, we define
$$\alpha_{\theta ,\lambda}^+ = \dfrac{e^{i\theta}}{\lambda^4} (Y_\lambda^{(2)} +i Y_\lambda^{(1)}) \text{ and } \alpha_{\theta ,\lambda}^- =  \dfrac{e^{i\theta}}{\lambda^4} (Y_\lambda^{(2)} - i Y_\lambda^{(1)}). $$
For later purpose, notice that $\|\alpha_{\theta ,\lambda}^\pm \|_{L^{\frac{2N}{N+4}}} \lesssim 1$.
As previously, decomposing $g$ as $g=g_1 +i g_2$, we have
\begin{equation}
\label{prelime1}
\langle \alpha_{\theta ,\lambda}^+ , e^{i\theta} g_\lambda \rangle = \langle Y^{(2)} ,g_1\rangle + \langle Y^{(1)} , g_2 \rangle 
\end{equation}
and
\begin{equation}
\label{prelime2}
 \langle \alpha_{\theta ,\lambda}^- , e^{i\theta} g_\lambda \rangle = \langle Y^{(2)} ,g_1\rangle - \langle Y^{(1)} , g_2 \rangle .
\end{equation}
Recalling once more that $W\in \ker L^-$ and $\Lambda W \in \ker L^+$ and integrating by parts, we get
$$\langle W,Y^{(1)} \rangle = \frac{1}{\nu} \langle W , L^- Y^{(2)} \rangle = \frac{1}{\nu} \langle L^- W ,  Y^{(2)} \rangle =0 $$
and
$$\langle \Lambda W,Y^{(2)} \rangle =- \frac{1}{\nu} \langle \Lambda W , L^+ Y^{(1)} \rangle =- \frac{1}{\nu} \langle L^+ (\Lambda W) ,  Y^{(1)} \rangle =0 .$$
Plugging these expressions into \eqref{prelime1} and \eqref{prelime2}, we obtain
$$\langle \alpha_{\theta ,\lambda}^+ , ie^{i\theta } W_\lambda \rangle = \langle \alpha_{\theta ,\lambda}^- , ie^{i\theta } W_\lambda \rangle =0,  $$
and
$$ \langle \alpha_{\theta ,\lambda}^+ , e^{i\theta } \Lambda W_\lambda \rangle =\langle \alpha_{\theta ,\lambda}^- , e^{i\theta } \Lambda W_\lambda \rangle =0. $$
Since $Y^{(2)}\neq W $ and $\langle g ,L^- g \rangle >0$ if $g\notin \Span (W)$, we deduce that
$$\langle Y^{(1)} , Y^{(2)} \rangle = \frac{1}{\nu} \langle Y^{(2)} , L^- Y^{(2)} \rangle >0.$$
Finally, one can check that $\alpha_{\theta ,\lambda}^\pm$ are eigenfunctions of $Z^\ast_{\theta ,\lambda}$ (the adjoint operator of $Z_{\theta ,\lambda}$ with respect to our scalar product) with eigenvalues $\pm \frac{\nu}{\lambda^4}$, namely
\begin{equation}
\label{eigenvprel1}
\langle \alpha_{\theta ,\lambda}^+ , Z_{\theta ,\lambda} (e^{i\theta } g_\lambda ) = \frac{\nu}{\lambda^4} \langle \alpha_{\theta ,\lambda}^+ , e^{i\theta} g_\lambda  \rangle 
\end{equation}
and
\begin{equation}
\label{eigenvprel2}
\langle \alpha_{\theta ,\lambda}^- , Z_{\theta ,\lambda} (e^{i\theta } g_\lambda ) =- \frac{\nu}{\lambda^4} \langle \alpha_{\theta ,\lambda}^- , e^{i\theta} g_\lambda  \rangle.
\end{equation}
We finish this section with a simple lemma based on Taylor's expansions.
\begin{lem}
Let $N\geq 13$. For any $z_i \in \mathbb{C}$, $i=1,2,3$, we have
\begin{equation}
\label{fe1}
|f^\prime (z_1 +z_2)-f^\prime (z_1)|\lesssim |f^\prime (z_2)|,\ |f^\prime (z_1 +z_2)-f^\prime (z_1)|\lesssim |z_1|^{-\frac{N-12}{N-4}} |z_2|,\ if\ z_1 \neq 0,
\end{equation}
\begin{equation}
\label{fe2}
|f(z_1 +z_2) - f(z_1)|\lesssim |f^\prime (z_1)||z_2|+|f(z_2)|,
\end{equation}
\begin{equation}
\label{fe3}
|f(z_1 +z_2) - f(z_1)-f^\prime (z_1) z_2|\lesssim f(|z_2|),
\end{equation}
\begin{equation}
\label{fe4}
|f(z_1 +z_2) - f(z_1) -f^\prime (z_1) z_2|\lesssim |z_1|^{-\frac{N-12}{N-4}}|z_2|^2,
\end{equation}
\begin{equation}
\label{fe5}
|F(z_1 +z_2) - F(z_1)- \mathcal{R} (\overline{f(z_1)} z_2)|\lesssim |f^\prime (z_1)||z_2|^2+|F(z_2)|,
\end{equation}
\begin{equation}
\label{fe6}
|F(z_1 +z_2) - F(z_1) - \mathcal{R} (\overline{f(z_1)} z_2) - \mathcal{R} (\overline{f^\prime (z_1) z_2}z_2 ) |\lesssim |F(z_2)|.
\end{equation}
\end{lem}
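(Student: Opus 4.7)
The plan is to view each estimate as a quantitative Taylor expansion for $f$ and $F$, exploiting that for $N\geq 13$ the exponent $p:=8/(N-4)<1$, so that $f$ is only $C^{1,p}$ in the real sense. The explicit formula $f'(z)z_1=|z|^{p}\bigl(z_1+pz\mathcal{R}(z_1/z)\bigr)$ recorded just after the definition of $f$ gives the pointwise comparison $|f'(z)|\sim |z|^{p}$, which is the backbone of everything that follows.

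I would first prove \eqref{fe1}. The scalar map $r\mapsto r^{p}$ is concave and subadditive for $p\leq 1$, so $z\mapsto |z|^{p}$ is globally $p$-Hölder; together with the explicit form of $f'$ this yields $|f'(z_1+z_2)-f'(z_1)|\lesssim |z_2|^{p}\sim |f'(z_2)|$, which is the first bound. For the second, finer estimate I would split cases: when $|z_2|\leq |z_1|/2$, $|z_1+tz_2|\sim |z_1|$ for $t\in[0,1]$, and the ordinary mean value theorem applied to $|z|^{p}$ on the annulus $\{|z|\sim |z_1|\}$ produces $|z_1|^{p-1}|z_2|=|z_1|^{-(N-12)/(N-4)}|z_2|$; when $|z_2|>|z_1|/2$, the inequality $|z_2|^{p}\lesssim |z_1|^{p-1}|z_2|$ holds because $p-1<0$, so the first bound already implies the second.

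Next I would deduce \eqref{fe2}, \eqref{fe3}, \eqref{fe4} from the fundamental theorem of calculus $f(z_1+z_2)-f(z_1)=\int_0^1 f'(z_1+tz_2)z_2\,dt$, combined with \eqref{fe1}. For \eqref{fe2}, I bound $|f'(z_1+tz_2)|\leq |f'(z_1)|+|f'(z_1+tz_2)-f'(z_1)|\lesssim |f'(z_1)|+t^{p}|f'(z_2)|$, multiply by $|z_2|$ and integrate; the second term integrates to a constant times $|f'(z_2)||z_2|\sim|f(z_2)|$. For \eqref{fe3}, I rewrite the remainder as $\int_0^1(f'(z_1+tz_2)-f'(z_1))z_2\,dt$ and apply the first half of \eqref{fe1}, getting $\int_0^1 t^{p}|f'(z_2)||z_2|\,dt\sim |z_2|^{1+p}=f(|z_2|)$. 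For \eqref{fe4}, the same manipulation with the second half of \eqref{fe1} works when $|z_2|\leq |z_1|/2$; in the complementary regime, \eqref{fe3} already suffices, since $f(|z_2|)=|z_2|^{p-1}|z_2|^{2}\lesssim |z_1|^{p-1}|z_2|^{2}=|z_1|^{-(N-12)/(N-4)}|z_2|^{2}$.

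Finally, \eqref{fe5} and \eqref{fe6} follow from $F(z_1+z_2)-F(z_1)=\int_0^1\mathcal{R}(\overline{f(z_1+tz_2)}z_2)\,dt$, which is simply the statement that the real gradient of $F$ on $\mathbb{C}\simeq\mathbb{R}^2$ equals $f$. Subtracting the $t=0$ contribution and applying \eqref{fe2} to $f(z_1+tz_2)-f(z_1)$ yields \eqref{fe5} after a routine integration (the second-order term of type $|z_2||f(tz_2)|$ integrates to $O(F(z_2))$). For \eqref{fe6}, one more Taylor step is needed: expand $f(z_1+tz_2)-f(z_1)=tf'(z_1)z_2+O(f(t|z_2|))$ via \eqref{fe3}, integrate the linear term to produce a multiple of $\mathcal{R}(\overline{f'(z_1)z_2}z_2)$ (the numerical constant being absorbed into $\lesssim$), and bound the error integral by $\int_0^1 t^{1+p}|z_2|^{2+p}dt\sim F(z_2)$. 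The only genuine subtlety throughout is the sub-Lipschitz character of $f'$ for $N\geq 13$; this is precisely what forces \eqref{fe1} to come in two flavors and causes the case split in \eqref{fe4}.
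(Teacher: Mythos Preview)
Your approach is exactly what the paper has in mind: the paper offers no detailed argument, simply introducing the lemma as ``a simple lemma based on Taylor's expansions'', and your integral representations $f(z_1+z_2)-f(z_1)=\int_0^1 f'(z_1+tz_2)z_2\,dt$ and $F(z_1+z_2)-F(z_1)=\int_0^1 \mathcal R(\overline{f(z_1+tz_2)}z_2)\,dt$, combined with the H\"older/Lipschitz bounds on $f'$ from \eqref{fe1} and the case split $|z_2|\lessgtr |z_1|/2$, are the standard way to make this precise. Everything through \eqref{fe5} is clean.

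One point deserves care in \eqref{fe6}. You write that integrating the linear term $t\,f'(z_1)z_2$ produces ``a multiple of $\mathcal R(\overline{f'(z_1)z_2}\,z_2)$ (the numerical constant being absorbed into $\lesssim$)''. That is not quite right: the integral gives precisely $\tfrac12\,\mathcal R(\overline{f'(z_1)z_2}\,z_2)$, and the missing $\tfrac12$ cannot be hidden in the $\lesssim$, since $\mathcal R(\overline{f'(z_1)z_2}\,z_2)\sim |z_1|^{8/(N-4)}|z_2|^2$ is generically \emph{not} $O(F(z_2))$ (take $z_1=1$, $z_2=\varepsilon\to 0$). In other words, \eqref{fe6} as literally printed is off by a factor $\tfrac12$; your argument actually proves the correct version
\[
\Big|F(z_1+z_2)-F(z_1)-\mathcal R(\overline{f(z_1)}z_2)-\tfrac12\,\mathcal R(\overline{f'(z_1)z_2}\,z_2)\Big|\lesssim |F(z_2)|,
\]
which is exactly what is used later to obtain \eqref{enere1} (note the explicit $\tfrac12$ in front of $\langle D^2E(\cdot)g,g\rangle$ there). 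So your reasoning is sound; just state the constant rather than waving it away.
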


\section{Modulation analysis}
In this section, we are going to perfom a modulation analysis. This will give us a precise idea of what should be required on our parameters. Let us recall that we are looking for a solution $u$ be of the form
\[
u(t)= e^{i\zeta (t)}W_{\mu (t)} +e^{i\theta (t)} W_{\lambda (t)} +g(t)
\] 
with $|\zeta + \pi /2|+|\mu -1|+|\theta|+\lambda +\|g\|_{\mathcal{E}} \ll 1$.
 Differentiating this expression with respect to $t$, we get 
$$\partial_t u = \zeta^\prime i e^{i\zeta} W_\mu - \frac{\mu^\prime}{\mu} e^{i\zeta}\Lambda W_\mu +\theta^\prime i e^{i\theta} W_\lambda - \frac{\lambda^\prime}{\lambda}e^{i\theta} \Lambda W_\lambda +\partial_t g.$$
On the other hand, using that $-\Delta^2 W_\mu +f(W_\mu) = -\Delta^2 W_\lambda + f(W_\lambda)=0 $, we have
$$-i\Delta^2 u +i f(u)= -i\Delta^2 g +i\big(f(e^{i\zeta} W_\mu +e^{i\theta} W_\lambda +g) - f(e^{i\zeta} W_\mu )- f(e^{i\theta }W_\lambda)\big).$$
We deduce from the two previous lines that $g$ satisfies
\begin{align*}
\partial_t g &=-i \Delta^2 g +i\big(f(e^{i\zeta} W_\mu +e^{i\theta} W_\lambda +g) - f(e^{i\zeta} W_\mu ) - f(e^{i\theta }W_\lambda)\big)\\
&-  \zeta^\prime i e^{i\zeta} W_\mu + \frac{\mu^\prime}{\mu} e^{i\zeta}\Lambda W_\mu -\theta^\prime i e^{i\theta} W_\lambda + \frac{\lambda^\prime}{\lambda} e^{i\theta}\Lambda W_\lambda  .
\end{align*}

 We set 
$$C_1 = \int_{\R^N} W^{\frac{N+4}{N-4}} dx,$$

$$C_2= \dfrac{N+4}{N-4} \int_{\R^N} W^{\frac{8}{N-4}} \Lambda W dx,$$
and
\begin{equation}\label{1deftildec}
\tilde{C}=\left(\frac{4\|W\|_{L^2}^2}{(N-12)C_1}\right)^{\frac{2}{N-12}}.
\end{equation}

We also impose that $g$ satisfies the following orthogonality conditions
\begin{equation}
\label{ortg}
\langle ie^{i\zeta} \Lambda W_\mu ,g \rangle = \langle -e^{i\zeta}  W_\mu ,g \rangle =\langle ie^{i\theta} \Lambda W_\lambda ,g \rangle = \langle -e^{i\theta}  W_\lambda ,g \rangle =0 .
\end{equation}

\begin{prop}\label{lem3.1}
Let $c>0$ be an arbitrarily small constant. Let $T_0=T_0(c)<0$ with $|T_0|$ large enough and $T<T_1 \leq T_0$. Suppose that for $T\leq t \leq T_1$, we have
\begin{equation}
\label{mode1}
|\zeta (t) +\frac{\pi}{2}| \leq |t|^{-\frac{3}{N-12}},
\end{equation}
\begin{equation}
\label{mode2}
|\mu (t) -1| \leq |t|^{-\frac{3}{N-12}},
\end{equation}
\begin{equation}
\label{mode3}
|\theta (t)| \leq |t|^{-\frac{1}{N-12}},
\end{equation}
\begin{equation}
\label{mode4}
|\lambda (t) - \tilde{C} |t|^{-\frac{2}{N-12}} |\leq 
|t|^{-\frac{5}{2(N-12)}},
\end{equation}
\begin{equation}
\label{mode5}
\|g\|_{\mathcal{E}} \leq |t|^{-\frac{N-3}{2(N-12)}}.
\end{equation}
Then, for $T\leq t\leq T_1$, there hold
\begin{equation}
\label{mode6}
|\zeta^\prime (t)|\leq c |t|^{-\frac{N-5}{N-12}},
\end{equation}
\begin{equation}
\label{mode7}
|\mu^\prime (t)| \leq c |t|^{-\frac{N-5}{N-12}},
\end{equation}
\begin{equation}
\label{mode8}
\left|\lambda^\prime (t) - \frac{C_1}{2\|W\|_{L^2}^2} \lambda(t)^{\frac{N-10}{2}}\right|\leq c |t|^{-\frac{2N-19}{2(N-12)}} ,
\end{equation}
and
\begin{equation}
\label{mode9}
\left|\theta^\prime (t) + \frac{C_2}{2\|W\|_{L^2}^2}\theta(t)\lambda(t)^{\frac{N-12}{2}} -\frac{K(t)}{2\lambda(t)^4 \|W\|_{L^2}^2} \right|\leq c |t|^{-\frac{N-11}{N-12}} ,
\end{equation}
where
$$K= - \langle  e^{i\theta}  \Lambda W_\lambda ,  f(e^{i\zeta} W_\mu +e^{i\theta} W_\lambda +g) - f(e^{i\zeta} W_\mu +e^{i\theta} W_\lambda) -f^\prime (e^{i\theta} W_\lambda +e^{i\zeta} W_\mu) g \rangle .$$
\end{prop}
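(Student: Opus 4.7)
The plan is to differentiate each of the four orthogonality conditions in \eqref{ortg} with respect to time and substitute the evolution equation for $g$ derived above, producing a $4\times4$ near-diagonal linear system for $(\zeta',\mu'/\mu,\theta',\lambda'/\lambda)$ from which estimates \eqref{mode6}--\eqref{mode9} are to be extracted. Writing the test directions as $\phi\in\{ie^{i\zeta}\Lambda W_\mu,-e^{i\zeta}W_\mu,ie^{i\theta}\Lambda W_\lambda,-e^{i\theta}W_\lambda\}$, the identity $\frac{d}{dt}\langle\phi,g\rangle=0$ gives $\langle\phi,\partial_t g\rangle=-\langle\partial_t\phi,g\rangle$, whose right-hand side is of strictly lower order by the bootstrap \eqref{mode5} and the sought-after rate bounds.

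The key algebraic step is to split the nonlinearity into the pure bubble interaction
\[
\mathcal{I}_{\mathrm{bub}}:=f(e^{i\zeta}W_\mu+e^{i\theta}W_\lambda)-f(e^{i\zeta}W_\mu)-f(e^{i\theta}W_\lambda),
\]
the linearization $f'(e^{i\zeta}W_\mu+e^{i\theta}W_\lambda)g$, and the quadratic Taylor remainder, whose projection onto $e^{i\theta}\Lambda W_\lambda$ is precisely $-K(t)$. Combining $-i\Delta^2 g+if'(e^{i\theta}W_\lambda)g=Z_{\theta,\lambda}g$ and invoking \eqref{ortZ} together with its $-e^{i\theta}W_\lambda$-counterpart, the linear-in-$g$ principal part vanishes identically upon the two $(\theta,\lambda)$-projections; the small discrepancy $i(f'(e^{i\zeta}W_\mu+e^{i\theta}W_\lambda)-f'(e^{i\theta}W_\lambda))g$ is controlled by \eqref{fe1} and Sobolev embedding. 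The same cancellation using $Z_{\zeta,\mu}$ handles the $(\zeta,\mu)$-projections. The diagonal entries of the resulting linear system come from the modulation terms: projecting $-i\theta' e^{i\theta}W_\lambda$ onto $ie^{i\theta}\Lambda W_\lambda$ gives $2\lambda^4\|W\|_{L^2}^2\,\theta'$, the $\lambda'$-coefficient in the $-e^{i\theta}W_\lambda$-equation is $2\lambda^3\|W\|_{L^2}^2$, and the diagonal entries for $\zeta'$ and $\mu'$ are of order $\|W\|_{L^2}^2$ (independent of $\lambda$).

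The dominant forcing is $\mathcal{I}_{\mathrm{bub}}$: in the inner region $|x|\lesssim\lambda$, where $W_\lambda$ dominates $W_\mu$, \eqref{fe4} gives $\mathcal{I}_{\mathrm{bub}}\approx f'(e^{i\theta}W_\lambda)(e^{i\zeta}W_\mu)=W_\lambda^{8/(N-4)}W_\mu\bigl(e^{i\zeta}+\tfrac{8}{N-4}e^{i\theta}\cos(\zeta-\theta)\bigr)$. Since $|\zeta+\pi/2|$ is negligible compared to $|\theta|$ by \eqref{mode1}--\eqref{mode3}, the phase expansion $\sin(\zeta-\theta)\approx-1$, $\cos(\zeta-\theta)\approx-\theta$ yields after projection onto $-e^{i\theta}W_\lambda$ a dominant source $\propto C_1\lambda^{(N-4)/2}$, driving \eqref{mode8}; the projection onto $ie^{i\theta}\Lambda W_\lambda$ instead produces a source $\propto C_2\,\theta\,\lambda^{(N-4)/2}$ through the $\cos(\zeta-\theta)\approx-\theta$ factor, driving \eqref{mode9}. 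The principal technical difficulty is exactly this phase expansion: without the right-angle condition $\zeta\approx-\pi/2$ the zeroth-order-in-$\theta$ contribution to the $\theta'$-equation would be an inadmissibly large $O(\lambda^{(N-4)/2})$, and it is precisely its suppression at the ansatz phase that exposes the desired $\theta\,\lambda^{(N-12)/2}$ structure, leaving only $K(t)$ to be absorbed later via the virial correction outlined in the introduction. The outer-region contribution to $\mathcal{I}_{\mathrm{bub}}$ is handled symmetrically via \eqref{fe3}, the cubic-in-$g$ remainder is bounded through the Sobolev embedding $\dot{H}^2\hookrightarrow L^{2N/(N-4)}$ together with \eqref{fe3} and \eqref{mode5}, and the cross moments between the two bubbles carry extra positive powers of $\lambda$ that fit within the error budgets in \eqref{mode6}--\eqref{mode9}; inverting the resulting near-diagonal (in fact nearly block-triangular) system then yields the claimed estimates.
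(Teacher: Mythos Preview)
Your proposal is correct and follows essentially the same route as the paper: differentiate the four orthogonality conditions, use the cancellation \eqref{ortZ} (and its $(\zeta,\mu)$-analogue) to eliminate the principal linear-in-$g$ part, expand the bubble interaction $\mathcal{I}_{\mathrm{bub}}$ via the phase identities $\cos(\zeta-\theta)\approx-\theta$ and $\sin(\zeta-\theta)\approx-1$ to extract the $C_1\lambda^{(N-4)/2}$ and $C_2\theta\lambda^{(N-4)/2}$ forcings, and invert the resulting nearly block-triangular $4\times4$ system. The paper carries out exactly this scheme with the variables $(\mu^4\zeta',\mu^3\mu',\lambda^4\theta',\lambda^3\lambda')$, splitting the integrals at $|x|=\sqrt{\lambda}$ (or $|x|=\lambda^{(N-8)/(2(N-4))}$) to get the sharp error bounds you allude to.
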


Let us notice for later purpose that, since $|K|\lesssim \|g\|_{\mathcal{E}}^2$, a simple consequence of \eqref{mode8} and \eqref{mode9} is that
\begin{equation}\label{thetalambdaprime}
|\theta^\prime (t)|+
\frac{|\lambda^\prime (t)|}{\lambda(t)}\lesssim |t|^{-1}.
\end{equation}

\begin{proof}
Differentiating in time the orthogonality conditions \eqref{ortg}, we will get a linear system of the following form: 
$$\begin{pmatrix} M_{11} & M_{12} & M_{13} & M_{14}\\  M_{21} & M_{22} & M_{23} & M_{24}\\  M_{31} & M_{32} & M_{33} & M_{34}\\ M_{41} & M_{42} & M_{43} & M_{44}  \end{pmatrix} \begin{pmatrix}\mu^4 \zeta^\prime \\ \mu^3 \mu^\prime \\ \lambda^4 \theta^\prime \\ \lambda^3 \lambda^\prime \end{pmatrix} =\begin{pmatrix} B_1 \\ B_2 \\ B_3 \\ B_4 \end{pmatrix}.$$
In a first time, we will focus on the first row of this system. 

 {\bf First row.}

Differentiating $\langle ie^{i\zeta} \Lambda W_\mu ,g\rangle =0$ with respect to time, we have
\begin{align*}
0&= \frac{d}{dt} \langle ie^{i\zeta} \Lambda W_\mu ,g\rangle = - \zeta^\prime \langle e^{i\zeta} \Lambda W_\mu ,g \rangle - \frac{\mu^\prime}{\mu} \langle ie^{i\zeta} \Lambda \Lambda W_\mu ,g \rangle + \langle ie^{i\zeta} \Lambda W_\mu ,\partial_t g \rangle \\
&= \zeta^\prime (- \langle i e^{i\zeta} \Lambda W_\mu , i e^{i\zeta} W_\mu \rangle - \langle e^{i\zeta} \Lambda W_\mu ,g \rangle) +\frac{\mu^\prime}{\mu} (\langle  i e^{i\zeta} \Lambda W_\mu , e^{i\zeta} \Lambda W_\mu\rangle - \langle i e^{i\zeta} \Lambda \Lambda W_\mu ,g \rangle)\\
&+\theta^\prime \langle ie^{i\zeta} \Lambda W_\mu ,-i e^{i\theta} W_\lambda \rangle +\frac{\lambda^\prime}{\lambda} \langle ie^{i\zeta} \Lambda W_\mu , e^{i\theta} \Lambda W_\lambda \rangle \\
&+ \langle i e^{i\zeta} \Lambda W_\mu , -i \Delta^2 g +i (f(e^{i\zeta} W_\mu +e^{i\theta} W_\lambda +g) - f(e^{i\zeta} W_\mu) - f(e^{i\theta} W_\lambda)) \rangle .
\end{align*}

Noticing that $\langle -\Lambda W_\mu ,W_\mu \rangle =2 \|W_\mu\|_{L^2}^2 =2 \mu^4  \|W\|_{L^2}^2$, we infer that
\begin{align*}
M_{11} &= \mu^{-4}\big(- \langle i e^{i\zeta} \Lambda W_\mu , i e^{i\zeta} W_\mu \rangle - \langle e^{i\zeta} \Lambda W_\mu ,g \rangle\big)  =2 \|W\|_{L^2}^2 + O(\|g\|_{\mathcal{E}})\\
&= 2 \|W\|_{L^2}^2 +O\big(|t|^{-\frac{N-3}{2(N-12)}}\big),\\
M_{12}&= \mu^{-4}  \big(\langle  i e^{i\zeta} \Lambda W_\mu , e^{i\zeta} \Lambda W_\mu\rangle - \langle i e^{i\zeta} \Lambda \Lambda W_\mu ,g \rangle\big) = O(\|g\|_{\mathcal{E}})=O\big(|t|^{-\frac{N-3}{2(N-12)}}\big),\\
M_{13}&= \lambda^{-4}  \langle ie^{i\zeta} \Lambda W_\mu ,-i e^{i\theta} W_\lambda \rangle  =O(1),\\
\noalign{and}
M_{14}&= \lambda^{-4}  \langle ie^{i\zeta} \Lambda W_\mu ,e^{i\theta} \Lambda W_\lambda \rangle =O(1). 
\end{align*}
 
Consider next
$$B_1 =-  \big\langle i e^{i\zeta} \Lambda W_\mu , -i \Delta^2 g +i \big(f(e^{i\zeta} W_\mu +e^{i\theta} W_\lambda +g) - f(e^{i\zeta} W_\mu) - f(e^{i\theta} W_\lambda)\big) \big\rangle .$$
Using \eqref{ortZ}, this can be rewritten as
\begin{align*}
B_1 &= - \langle   e^{i\zeta} \Lambda W_\mu ,  f(e^{i\zeta} W_\mu +e^{i\theta} W_\lambda +g) - f(e^{i\zeta} W_\mu) - f(e^{i\theta} W_\lambda) - f^\prime (e^{i\zeta} W_\mu) g \rangle \\
&=-\langle   e^{i\zeta} \Lambda W_\mu ,  f(e^{i\zeta} W_\mu +e^{i\theta} W_\lambda +g) - f(e^{i\zeta} W_\mu +e^{i\theta} W_\lambda) - f^\prime (e^{i\zeta} W_\mu  + e^{i\theta} W_\lambda) g \rangle \\
 &-  \langle   e^{i\zeta} \Lambda W_\mu , f(e^{i\zeta} W_\mu +e^{i\theta} W_\lambda ) - f(e^{i\zeta} W_\mu )- f(e^{i\theta} W_\lambda) \rangle\\
&- \langle   e^{i\zeta} \Lambda W_\mu ,  (f^\prime (e^{i\zeta} W_\mu +e^{i\theta} W_\lambda ) - f^\prime (e^{i\zeta} W_\mu ))g \rangle\\
&= B_{11}+B_{12}+B_{13} .
\end{align*}
We will show that
\begin{align*}
|B_{11}|&= |\langle   e^{i\zeta} \Lambda W_\mu ,  f(e^{i\zeta} W_\mu +e^{i\theta} W_\lambda +g) - f(e^{i\zeta} W_\mu +e^{i\theta} W_\lambda) - f^\prime (e^{i\zeta} W_\mu + e^{i\theta} W_\lambda) g \rangle |\\
& \lesssim \|g\|_{\mathcal{E}}^2,\\
|B_{12} |&= |\langle   e^{i\zeta} \Lambda W_\mu ,  f(e^{i\zeta} W_\mu +e^{i\theta} W_\lambda ) - f(e^{i\zeta} W_\mu )- f(e^{i\theta} W_\lambda) \rangle | \lesssim \lambda^{\frac{N-4}{2}},\\
\noalign{and}
|B_{13}|&= |\langle   e^{i\zeta} \Lambda W_\mu ,  (f^\prime (e^{i\zeta} W_\mu +e^{i\theta} W_\lambda ) - f^\prime (e^{i\zeta} W_\mu ))g \rangle | \lesssim \lambda^{\frac{N-4}{4}} \|g\|_{\mathcal{E}} .
\end{align*}

Therefore, we will get the following estimate for $B_1$ 
$$|B_1|\lesssim  \|g\|_{\mathcal{E}}^2+\lambda^{\frac{N-4}{2}}+ \lambda^{\frac{N-4}{4}} \|g\|_{\mathcal{E}}\lesssim 
|t|^{-\frac{N-4}{N-12}}.$$
Let us start by the estimation of $|B_{11}|$. Since $\zeta \approx -\pi /2$ and $\theta \approx 0$, there holds $|e^{i\zeta }W_\mu +e^{i\theta} W_\lambda | \gtrsim W_\mu$. So, Taylor's expansion \eqref{fe4} gives
$$|f(e^{i\zeta} W_\mu +e^{i\theta} W_\lambda +g) - f(e^{i\zeta} W_\mu +e^{i\theta} W_\lambda) - f^\prime (e^{i\zeta} W_\mu +e^{i\theta} W_\lambda) g |\lesssim W_\mu^{- \frac{N -12}{N-4}} |g|^2  .$$
Since $|\Lambda W|\lesssim W$, using H\"older's and Sobolev's inequalities, we get
\begin{align}\label{b11}
|B_{11}|=& |\langle   e^{i\zeta} \Lambda W_\mu ,  f(e^{i\zeta} W_\mu +e^{i\theta} W_\lambda +g) - f(e^{i\zeta} W_\mu +e^{i\theta} W_\lambda) - f^\prime (e^{i\zeta} W_\mu) g) \rangle |\nonumber\\
& \leq \int_{\R^N} |W_\mu|^{\frac{8}{N-4}} |g|^2 dx \leq  \|W_\mu \|_{L^{\frac{2N}{N-4}}}^{\frac{8}{N-4}} \|g\|_{L^{\frac{2N}{N-4}}}^2 \lesssim \|g\|_{\mathcal{E}}^2  .
\end{align}
Next let us estimate $|B_{12}|$. Using a Taylor's expansion see \eqref{fe2}, we find

$$|f(e^{i\zeta} W_\mu +e^{i\theta} W_\lambda ) - f(e^{i\zeta} W_\mu ) -f(e^{i\theta} W_\lambda)  |\lesssim W_\mu^{\frac{8}{N-4}} W_\lambda +|f(W_\lambda)| . $$
Recalling that $W(x)=c_N (1+|x|^2)^{\frac{4-N}{2}}$ and $f(u)= |u|^{\frac{8}{N-4}}u$, we observe that $f(W)\in L^1 (\R^N)$. Moreover, using that $u_\lambda (x)= \lambda^{- \frac{N-4}{2}}u(x/\lambda)$, one can check that 
$$\|f(W_\lambda)\|_{L^1}= \dfrac{1}{\lambda^{\frac{N+4}{2}}} \int_{\R^N} W (x/\lambda)^{\frac{N+4}{N-4}} dx\approx \lambda^{\frac{N-4}{2}}.$$ 
Since $|\Lambda W_\mu|\lesssim 1$, this takes care of the term involving $|f(W_\lambda)|$. Concerning the one involving $W_\mu^{\frac{8}{N-4}} W_\lambda$, we will split it into two parts. First, when $|x|\leq 1$, we have
$$\|W_\lambda\|_{L^1 (|x|\leq 1)} = \lambda^{\frac{N+4}{2}} \|W\|_{L^1 (|x|\leq \lambda^{-1})} \lesssim \lambda^{\frac{N+4}{2}} \int_0^{\lambda^{-1}} r^{4-N} r^{N-1} dr \lesssim \lambda^{\frac{N-4}{2}}.$$
Using once more that $|\Lambda W_\mu|\lesssim 1$, we get the desired estimate. Finally, for $|x|\geq 1$, we have $\|W_\lambda\|_{L^\infty (|x|\geq 1)} \lesssim \lambda^{\frac{N-4}{2}}$ and $| \Lambda W_\mu|  |W_\mu|^{\frac{8}{N-4}}$ is bounded in $L^1 (\R^N)$. Combining the previous estimates, we have shown that
$$|B_{12}|\lesssim \lambda^{\frac{N-4}{2}}.$$
Finally, let us turn to the estimate of $|B_{13}|$. We will also split the integral into two parts. When $|x|\leq \sqrt{\lambda}$, observe that
$$|f^\prime (e^{i\zeta} W_\mu +e^{i\theta} W_\lambda ) - f^\prime (e^{i\zeta} W_\mu )|\lesssim W_\lambda^{\frac{8}{N-4}}.$$
So, in this region, we have
\begin{align*}
& |\langle 1_{\{|x|\leq \sqrt{\lambda}\}}   e^{i\zeta} \Lambda W_\mu ,  (f^\prime (e^{i\zeta} W_\mu +e^{i\theta} W_\lambda ) - f^\prime (e^{i\zeta} W_\mu ))g \rangle |\\
&\lesssim |\int_{\{|x|\leq \sqrt{\lambda}\}} W_\lambda^{\frac{8}{N-4}} |g|dx | \lesssim   \|W_\lambda^{\frac{8}{N-4}}\|_{L^{\frac{2N}{N+4}}(|x|\leq \sqrt{\lambda)}}  \|g\|_{L^{\frac{2N}{N-4}}}\\
&\lesssim \lambda^{\frac{N-4}{2}} \|W^{\frac{8}{N-4}}\|_{L^{\frac{2N}{N+4}} (|x|\leq 1/ \sqrt{\lambda})} \|g\|_{\mathcal{E}} \lesssim \lambda^{\frac{N+4}{4}}\|g\|_{\mathcal{E}} .
\end{align*}
On the other hand in the region $|x|\geq \sqrt{\lambda}$, using H\" older's inequality, we get
\begin{align*}
& |\langle  1_{\{|x|\geq  \sqrt{\lambda}\}} e^{i\zeta} \Lambda W_\mu ,  (f^\prime (e^{i\zeta} W_\mu +e^{i\theta} W_\lambda ) - f^\prime (e^{i\zeta} W_\mu ))g \rangle |\\
&\lesssim   \|W_\lambda \|_{L^{\frac{2N}{N-4}}(|x|\geq  \sqrt{\lambda})}  \|g\|_{L^{\frac{2N}{N-4}}}\\
&\lesssim (\int_{\lambda^{-\frac{1}{2}} }^\infty r^{-2N} r^{N-1} dr)^{\frac{N-4}{2N}}  \|g\|_{\mathcal{E}} \lesssim \lambda^{\frac{N-4}{4}}  \|g\|_{\mathcal{E}}  .
\end{align*}
This proves that
$$|B_{13}|\lesssim \lambda^{\frac{N-4}{4}} \|g\|_{\mathcal{E}} .$$

Next, let us deal with the second row. 

{\bf Second row.}

This time, we differentiate $\langle -e^{i\zeta}  W_\mu ,g\rangle =0$ with respect to time. Thus, we obtain
\begin{align*}
0&= \frac{d}{dt} \langle -e^{i\zeta} W_\mu ,g\rangle = - \zeta^\prime \langle i e^{i\zeta}  W_\mu ,g \rangle + \frac{\mu^\prime}{\mu} \langle e^{i\zeta}  \Lambda W_\mu ,g \rangle - \langle e^{i\zeta}  W_\mu ,\partial_t g \rangle \\
&= \zeta^\prime \big( \langle  e^{i\zeta}  W_\mu , i e^{i\zeta} W_\mu \rangle - \langle i e^{i\zeta}  W_\mu ,g \rangle\big) +\frac{\mu^\prime}{\mu} \big(-\langle   e^{i\zeta}  W_\mu , e^{i\zeta} \Lambda W_\mu\rangle + \langle  e^{i\zeta}  \Lambda W_\mu ,g \rangle\big)\\
&+\theta^\prime \langle e^{i\zeta}  W_\mu ,i e^{i\theta} W_\lambda \rangle +\frac{\lambda^\prime}{\lambda} \langle -e^{i\zeta}  W_\mu , e^{i\theta} \Lambda W_\lambda \rangle \\
&- \big\langle  e^{i\zeta}  W_\mu , -i \Delta^2 g +i \big(f(e^{i\zeta} W_\mu +e^{i\theta} W_\lambda +g) - f(e^{i\zeta} W_\mu) - f(e^{i\theta} W_\lambda)\big) \big\rangle .
\end{align*}

We deduce from this expression that
\begin{align*}
M_{21} &= \mu^{-4}\big( \langle  e^{i\zeta}  W_\mu , i e^{i\zeta} W_\mu \rangle - \langle i e^{i\zeta}  W_\mu ,g \rangle\big)  = O(\|g\|_{\mathcal{E}})=O\big(|t|^{-\frac{N-3}{2(N-12)}}\big),\\ 
M_{22}&= \mu^{-4}  \big(-\langle   e^{i\zeta}  W_\mu , e^{i\zeta} \Lambda W_\mu\rangle + \langle  e^{i\zeta}  \Lambda W_\mu ,g \rangle\big) =2\|W\|_{L^2}^2 + O(\|g\|_{\mathcal{E}})\\
&= 2 \|W\|_{L^2}^2 +O\big(|t|^{-\frac{N-3}{2(N-12)}}\big),\\
M_{23}&= \lambda^{-4}  \langle e^{i\zeta}  W_\mu ,i e^{i\theta} W_\lambda \rangle  =O(1),\\
\noalign{and}
M_{24}&= \lambda^{-4}  \langle -e^{i\zeta}  W_\mu ,e^{i\theta} \Lambda W_\lambda \rangle =O(1). 
\end{align*}

We also find that
$$B_2 =  \big\langle  e^{i\zeta}  W_\mu , -i \Delta^2 g +i \big(f(e^{i\zeta} W_\mu +e^{i\theta} W_\lambda +g) - f(e^{i\zeta} W_\mu) - f(e^{i\theta} W_\lambda)\big) \big\rangle. $$
Proceeding as previously, we get that
$$|B_2|  \lesssim  \|g\|_{\mathcal{E}}^2+\lambda^{\frac{N-4}{2}}+ \lambda^{\frac{N-4}{4}} \|g\|_{\mathcal{E}}\lesssim 
|t|^{-\frac{N-4}{N-12}}.$$

{\bf Third row.} 

Differentiating $\langle ie^{i\theta}\Lambda  W_\lambda ,g\rangle =0$ with respect to time, we have
\begin{align*}
0&= \frac{d}{dt} \langle ie^{i\theta}\Lambda W_\lambda ,g\rangle = - \theta^\prime \langle e^{i\theta} \Lambda W_\lambda ,g \rangle - \frac{\lambda^\prime}{\lambda} \langle ie^{i\theta} \Lambda  \Lambda W_\lambda ,g \rangle + \langle ie^{i\theta}\Lambda  W_\lambda ,\partial_t g \rangle \\
&= \zeta^\prime \langle i e^{i\theta}\Lambda  W_\mu ,- i e^{i\zeta} W_\mu \rangle  +\frac{\mu^\prime}{\mu} \langle  i e^{i\theta} \Lambda W_\lambda , e^{i\zeta} \Lambda W_\mu\rangle\\
&+\theta^\prime \big(\langle ie^{i\theta}\Lambda  W_\lambda ,-i e^{i\theta} W_\lambda \rangle - \langle e^{i\theta} \Lambda W_\lambda ,g\rangle\big) +\frac{\lambda^\prime}{\lambda}\big( \langle ie^{i\theta} \Lambda  W_\lambda , e^{i\theta} \Lambda W_\lambda \rangle -\langle ie^{i\theta} \Lambda \Lambda W_\lambda ,g \rangle \big) \\
&+ \big\langle i e^{i\theta} \Lambda  W_\lambda , -i \Delta^2 g +i \big(f(e^{i\zeta} W_\mu +e^{i\theta} W_\lambda +g) - f(e^{i\zeta} W_\mu) - f(e^{i\theta} W_\lambda)\big) \big\rangle .
\end{align*}

Therefore, we find
\begin{align*}
M_{31} &= \mu^{-4}\langle i e^{i\theta}  \Lambda W_\lambda , - i e^{i\zeta} W_\mu \rangle  = O\big(\lambda^{\frac{N-4}{2}}\big)
=O\big(|t|^{-\frac{N-4}{N-12}}\big),\\
M_{32}&= \mu^{-4}  \langle  i e^{i\theta} \Lambda W_\lambda , e^{i\zeta} \Lambda W_\mu\rangle  = O\big(\lambda^{\frac{N-4}{2}}\big)
=O\big(|t|^{-\frac{N-4}{N-12}}\big),\\
M_{33}&= \lambda^{-4}  \big(\langle ie^{i\theta}\Lambda  W_\lambda ,-i e^{i\theta} W_\lambda \rangle - \langle e^{i\theta} \Lambda W_\lambda ,g\rangle\big)  =2\|W\|_{L^2}^2 +O(\|g\|_{\mathcal{E}})\\
&= 2 \|W\|_{L^2}^2 +O\big(|t|^{-\frac{N-3}{2(N-12)}}\big),\\
\noalign{and}
M_{34}&= \lambda^{-4} \big( \langle ie^{i\theta} \Lambda  W_\lambda , e^{i\theta} \Lambda W_\lambda \rangle -\langle ie^{i\theta} \Lambda \Lambda W_\lambda ,g \rangle \big)= O(\|g\|_{\mathcal{E}}) =
O\big(|t|^{-\frac{N-3}{2(N-12)}}\big).
\end{align*}

As before, $B_3$ can be rewritten as 
$$B_3 =-  \langle  e^{i\theta}  \Lambda W_\lambda ,  f(e^{i\zeta} W_\mu +e^{i\theta} W_\lambda +g) - f(e^{i\zeta} W_\mu) - f(e^{i\theta} W_\lambda) -f^\prime (e^{i\theta} W_\lambda) g \rangle .$$
We set
$$K= - \langle  e^{i\theta}  \Lambda W_\lambda ,  f(e^{i\zeta} W_\mu +e^{i\theta} W_\lambda +g) - f(e^{i\zeta} W_\mu +e^{i\theta} W_\lambda) -f^\prime (e^{i\theta} W_\lambda +e^{i\zeta} W_\mu) g \rangle .$$
 First we will estimate $B_3 - K$ by writing
\begin{align*}
B_3 - K&=- \big\langle  e^{i\theta}  \Lambda W_\lambda , \big( f^\prime (e^{i\theta} W_\lambda +e^{i\zeta} W_\mu) - f^\prime (e^{i\theta} W_\lambda )\big) g \big\rangle  \\
& - \langle  e^{i\theta}  \Lambda W_\lambda ,  f(e^{i\zeta} W_\mu +e^{i\theta} W_\lambda ) - f(e^{i\zeta} W_\mu) -f(e^{i\theta} W_\lambda)-f^\prime (e^{i\theta} W_\lambda) e^{i\zeta} W_\mu \rangle  \\
&- \langle e^{i\theta} \Lambda W_\lambda , f^\prime (e^{i\theta} W_\lambda) e^{i\zeta} W_\mu \rangle + C_2 \theta \lambda^{\frac{N-4}{2}} -C_2 \theta \lambda^{\frac{N-4}{2}} \\
&= B_{31}+B_{32}+ B_{33}- C_2 \theta \lambda^{\frac{N-4}{2}},
\end{align*}
where $C_2=- \frac{N+4}{N-4} \lambda^{-\frac{N-4}{2}} \int_{\R^N} W_\lambda^{\frac{8}{N-4}} \Lambda W_\lambda dx= \frac{N+4}{N-4} \int_{\R^N} W^{\frac{8}{N-4}} \Lambda W dx $. We will show that
$$ |B_{31}|= \big| \big\langle  e^{i\theta}  \Lambda W_\lambda , \big( f^\prime (e^{i\theta} W_\lambda +e^{i\zeta} W_\mu) - f^\prime (e^{i\theta} W_\lambda )\big) g \big\rangle \big| \lesssim \lambda^{\frac{N}{4} } \|g\|_{\mathcal{E}},$$
$$|B_{32}|=\big|\langle  e^{i\theta}  \Lambda W_\lambda ,  f(e^{i\zeta} W_\mu +e^{i\theta} W_\lambda ) - f(e^{i\zeta} W_\mu) -f(e^{i\theta} W_\lambda) - f^\prime (e^{i\theta} W_\lambda) e^{i\zeta} W_\mu\rangle \big| \lesssim \lambda^{\frac{N}{2}},$$
and
\begin{align*}
|B_{33}|&=|\langle e^{i\theta} \Lambda W_\lambda , f^\prime (e^{i\theta} W_\lambda) (e^{i\zeta} W_\mu)\rangle + C_2 \theta \lambda^{\frac{N-4}{2}}|\\
&\lesssim   ( |\theta|^3 + |\zeta +\pi/2| ) \lambda^{\frac{N-4}{2}} +  \lambda^{\frac{N-2}{2}} +|1-\mu| \lambda^{\frac{N-4}{2}}.
\end{align*}
So, by above estimates and noticing that $|K|\lesssim  \|g\|^2_{\mathcal{E}} $ (cf. \eqref{b11}), we find
\begin{align*}
|B_3|&\lesssim |B_3 -K |+|K|\\
&\lesssim \lambda^{\frac{N}{4} } \|g\|_{\mathcal{E}} +\lambda^{\frac{N}{2}}+  ( |\theta|^3 + |\zeta +\pi /2| ) \lambda^{\frac{N-4}{2}} +  \lambda^{\frac{N-2}{2}} +|1-\mu| \lambda^{\frac{N-4}{2}} + |\theta| \lambda^{\frac{N-4}{2}}+ \|g\|^2_{\mathcal{E}} \\
&\lesssim \lambda^{\frac{N-2}{2}} + |\theta|\lambda^{\frac{N-4}{2}}+ \|g\|^2_{\mathcal{E}}
\lesssim 
|t|^{-\frac{N-3}{N-12}}.
\end{align*}
So we are left with estimating the $B_{3i}$, $i=1,2,3$. We begin with $B_{31}$. We will split the integral into two parts depending whether 
 $|x|\leq \lambda^\gamma$, with $\gamma =\frac{N-8}{2(N-4)}$, or not. First, notice that
$$\big|  e^{i\theta}  \Lambda W_\lambda \big|\big|\big(f^\prime (e^{i\theta} W_\lambda +e^{i\zeta} W_\mu) - f^\prime (e^{i\theta} W_\lambda) \big)g\big|\lesssim W_\lambda^{\frac{8-N}{N-4}} W_\mu |g|\lesssim W_\lambda^{\frac{8}{N-4}} |g|  .$$
So, using H\"older's inequality, we find
\begin{align*}
 &\big| \big\langle 1_{\{|x|\leq \lambda^\gamma \}}  e^{i\theta}  \Lambda W_\lambda , \big( f^\prime (e^{i\theta} W_\lambda +e^{i\zeta} W_\mu) - f^\prime (e^{i\theta} W_\lambda )\big) g \big\rangle \big| \\ 
&\lesssim \|W_\lambda^{\frac{8}{N-4}}\|_{L^{\frac{2N}{N+4}} (|x|\leq \lambda^\gamma)} \|g\|_{\mathcal{E}} \\
&\lesssim \lambda^{\frac{N-4}{2}} \left(\int_0^{\lambda^{\gamma -1}}  r^{-\frac{16N}{N+4}} r^{N-1}dr\right)^{\frac{N+4}{2N}} \|g\|_{\mathcal{E}}\\
&\lesssim \lambda^{(\gamma -1) \frac{N- 12}{2} +\frac{N-4}{2}} \|g\|_{\mathcal{E}}\\
&\lesssim \lambda^{\frac{N}{4}} \|g\|_{\mathcal{E}}.
\end{align*}

On the other hand, when $|x|\geq \lambda^\gamma$, we have
\begin{align*}
\|\Lambda W_\lambda \|_{L^{\frac{2N}{N-4}}(|x|\geq \lambda^\gamma) } &= \|W\|_{L^{\frac{2N}{N-4}} (|x| \geq \lambda^{\gamma -1})} \\
&\lesssim \Big(\int_{\lambda^{\gamma -1} }^\infty r^{-2N} r^{N-1} dr \Big)^{\frac{N-4}{2N}}\\
&\lesssim \lambda^{(1-\gamma)\frac{N-4}{2}}=\lambda^{\frac{N}{4}}.
\end{align*}
Using H\" older's inequality as well as the fact that $ \|\Lambda W_\mu\|_{L^{\frac{N}{4}}(|x|\geq  \lambda^\gamma)} \lesssim 1$, we get that
$$|B_{31}|\lesssim  \lambda^{\frac{N}{4}} \|g\|_{\mathcal{E}}.$$

Next, let us estimate $B_{32}$. Notice that, if
 $|x|\geq \sqrt{\lambda}$, we have
$$|  f(e^{i\zeta} W_\mu +e^{i\theta} W_\lambda ) - f(e^{i\zeta} W_\mu) -f(e^{i\theta} W_\lambda) - f^\prime (e^{i\theta} W_\lambda) (e^{i\zeta} W_\mu)|\lesssim W_\lambda^{\frac{8}{N-4}} W_\mu.$$
Since $|\Lambda W|\lesssim W$, we have
$$\|W_\lambda^{\frac{N+4}{N-4}}\|_{L^1 (|x|\geq \sqrt{\lambda})} \lesssim \lambda^{\frac{N-4}{2}} \int_{\lambda^{-\frac{1}{2}}}^\infty r^{-(N+4) } r^{N-1}dr \lesssim \lambda^{\frac{N}{2}}. $$
So the estimate follows from the fact that $W_\mu \lesssim 1$. Now, if $|x|\leq \sqrt{\lambda}$, notice that $W_\mu \lesssim W_\lambda$, so
$$|  f(e^{i\zeta} W_\mu +e^{i\theta} W_\lambda ) - f(e^{i\zeta} W_\mu) -f(e^{i\theta} W_\lambda) - f^\prime (e^{i\theta} W_\lambda) (e^{i\zeta} W_\mu)|\leq W_\mu^{\frac{N+4}{N-4}}\lesssim 1.$$
Then the estimate of $B_{32}$ follows from
$$\|W_\lambda\|_{L^1 (|x|\leq \sqrt{\lambda})} \lesssim \lambda^{\frac{N+4}{2}} \int_0^{\frac{1}{\sqrt{\lambda}}} r^{4-N} r^{N-1} dr \leq \lambda^{\frac{N}{2}}.$$
Finally, we estimate $B_{33}$.
Recall that, by definition,
$$ f^\prime (e^{i\theta} W_\lambda) e^{i\zeta} W_\mu = W_\mu W_\lambda^{\frac{8}{N-4}} \left(e^{i\zeta} + \tfrac{8}{N-4} e^{i\theta} \mathcal{R} (e^{i(\zeta -\theta)})\right).$$
This implies that
$$\langle e^{i\theta} \Lambda W_\lambda , f^\prime (e^{i\theta} W_\lambda) e^{i\zeta} W_\mu\rangle = \frac{N+4}{N-4} \mathcal{R} \left(e^{i(\zeta -\theta)}\right) \int_{\R^N} W_\mu W_\lambda^{\frac{8}{N-4}} \Lambda W_\lambda dx.$$
 Since $ \int_{\R^N} W_\mu W_\lambda^{\frac{8}{N-4}} \Lambda W_\lambda dx \lesssim \lambda^{\frac{N-4}{2}}$ and $|\mathcal{R}(e^{i(\zeta -\theta)}) +\theta|\lesssim |\theta|^3 + |\zeta +\pi /2|$, we get
\begin{align*}
&\left|  \frac{N+4}{N-4} \mathcal{R} \left(e^{i(\zeta -\theta)}\right) \int_{\R^N} W_\mu W_\lambda^{\frac{8}{N-4}} \Lambda W_\lambda dx + \frac{N+4}{N-4} \theta \int _{\R^N}W_\mu  W_\lambda^{\frac{8}{N-4}} \Lambda W_\lambda dx\right|\\
& \lesssim ( |\theta|^3 + |\zeta +\pi/2| ) \lambda^{\frac{N-4}{2}}. 
\end{align*}
At this point, the estimate of $B_{33}$ follows from the estimate
\begin{equation}\label{3-26}
\left|\int_{\R^N} W_\lambda^{\frac{8}{N-4}} \Lambda W_\lambda dx - \int_{\R^N} W_\mu W_\lambda^{\frac{8}{N-4}} \Lambda W_\lambda dx\right| \lesssim \lambda^{\frac{N-2}{2}} +|1-\mu| \lambda^{\frac{N-4}{2}}.
\end{equation}
One can verify the estimate above by rescaling if $|x|\geq \sqrt{\lambda}$. When $|x|\leq \sqrt{\lambda}$, we have $|W_\mu - \mu^{-(N-4)/2}|\lesssim |x|^2 \lesssim \lambda$ and $|\mu^{-(N-4)/2}-1 |\leq |1-\mu|$ and \eqref{3-26} follows.

{\bf Fourth row.} 

Finally we deal with the fourth row. In this case, we differentiate $\langle -e^{i\theta}  W_\lambda ,g\rangle =0$ with respect to time to find
\begin{align*}
0&= \frac{d}{dt} \langle -e^{i\theta} W_\lambda ,g\rangle = - \theta^\prime \langle ie^{i\theta}  W_\lambda ,g \rangle + \frac{\lambda^\prime}{\lambda} \langle e^{i\theta}   \Lambda W_\lambda ,g \rangle - \langle e^{i\theta}  W_\lambda ,\partial_t g \rangle \\
&= \zeta^\prime \langle  e^{i\theta}  W_\lambda , i e^{i\zeta} W_\mu \rangle  -\frac{\mu^\prime}{\mu} \langle   e^{i\theta}  W_\lambda , e^{i\zeta} \Lambda W_\mu\rangle\\
&+\theta^\prime \big(\langle e^{i\theta}  W_\lambda ,i e^{i\theta} W_\lambda \rangle - \langle i e^{i\theta}  W_\lambda ,g\rangle\big) +\frac{\lambda^\prime}{\lambda}\big( \langle - e^{i\theta}   W_\lambda , e^{i\theta} \Lambda W_\lambda \rangle +\langle e^{i\theta}  \Lambda W_\lambda ,g \rangle \big) \\
&- \big\langle  e^{i\theta} W_\lambda , -i \Delta^2 g +i \big(f(e^{i\zeta} W_\mu +e^{i\theta} W_\lambda +g) - f(e^{i\zeta} W_\mu) - f(e^{i\theta} W_\lambda)\big) \big\rangle .
\end{align*}
Therefore, we get
\begin{align*}
M_{41} &= \mu^{-4}\langle i e^{i\theta}  W_\lambda ,  i e^{i\zeta} W_\mu \rangle  = O\big(\lambda^{\frac{N-4}{2}}\big)=O\big(|t|^{-\frac{N-4}{N-12}}\big),\\
M_{42}&= \mu^{-4}  \langle   e^{i\theta}  W_\lambda , e^{i\zeta} \Lambda W_\mu\rangle  = O\big(\lambda^{\frac{N-4}{2}}\big)
=O\big(|t|^{-\frac{N-4}{N-12}}\big),\\
M_{43}&= \lambda^{-4}  \big(\langle e^{i\theta} W_\lambda ,i e^{i\theta} W_\lambda \rangle - \langle i e^{i\theta}  W_\lambda ,g\rangle\big)  = O(\|g\|_{\mathcal{E}})
 =
O\big(|t|^{-\frac{N-3}{2(N-12)}}\big),\\
\noalign{and}
M_{44}&= \lambda^{-4} \big( \langle -e^{i\theta}   W_\lambda , e^{i\theta} \Lambda W_\lambda \rangle +\langle e^{i\theta} \Lambda \Lambda W_\lambda ,g \rangle \big)=2 \|W\|_{L^2}^2 +O(\|g\|_{\mathcal{E}})\\
&= 2 \|W\|_{L^2}^2 +O\big(|t|^{-\frac{N-3}{2(N-12)}}\big).
\end{align*}

We also have that
$$B_4 =  \big\langle  e^{i\theta}   W_\lambda , i\big( f(e^{i\zeta} W_\mu +e^{i\theta} W_\lambda +g) - f(e^{i\zeta} W_\mu) - f(e^{i\theta} W_\lambda) -f^\prime (e^{i\theta} W_\lambda) g \big)\big\rangle .$$
We rewrite $B_4$ as follows
\begin{align*}
B_4 &=\big\langle  e^{i\theta}   W_\lambda , i\big( f(e^{i\zeta} W_\mu +e^{i\theta} W_\lambda +g) - f(e^{i\zeta} W_\mu +e^{i\theta}W_\lambda)  -f^\prime (e^{i\theta} W_\lambda + e^{i\zeta} W_\mu) g \big)\big\rangle\\
&+ \big\langle  e^{i\theta}   W_\lambda ,  i\big( f^\prime (e^{i\zeta} W_\mu +e^{i\theta} W_\lambda )g - f^\prime (e^{i\theta} W_\lambda)g \big)\big\rangle \\
&+ \big\langle  e^{i\theta}   W_\lambda , i\big( f(e^{i\zeta} W_\mu +e^{i\theta} W_\lambda ) - f(e^{i\zeta} W_\mu)-f (e^{i\theta} W_\lambda) -f^\prime (e^{i\theta} W_\lambda) e^{i\zeta} W_\mu \big)\big\rangle\\
&+ \langle e^{i\theta } W_\lambda , if^\prime  (e^{i\theta} W_\lambda) e^{i\zeta} W_\mu \rangle .
\end{align*}
Since $|\zeta +\pi/2|\approx 0$ and $\theta \approx 0$, we see that $|e^{i\zeta} W_\mu +e^{i\theta} W_\lambda |\gtrsim W_\lambda$. So, using Taylor's expansion \eqref{fe4}, we find
$$| f(e^{i\zeta} W_\mu +e^{i\theta} W_\lambda +g) - f(e^{i\zeta} W_\mu +e^{i\theta} W_\lambda) -f^\prime (e^{i\theta} W_\lambda +e^{i\zeta} W_\mu) g| \lesssim W_\lambda^{\frac{12-N}{N-4}} |g|^2. $$
Using that $|\Lambda W|\lesssim W$ and H\"older's inequality, we deduce that
$$\big|   \big\langle  e^{i\theta}   W_\lambda , i\big( f(e^{i\zeta} W_\mu +e^{i\theta} W_\lambda +g) - f(e^{i\zeta} W_\mu +e^{i\theta} W_\lambda) -f^\prime (e^{i\theta} W_\lambda +e^{i\zeta} W_\mu) g \big)\big\rangle\big| \lesssim \|g\|_{\mathcal{E}}^2 .$$
Proceeding as for $B_{31}$, one can show that
$$\big|  \big\langle  e^{i\theta}   W_\lambda , i\big( f^\prime (e^{i\zeta} W_\mu +e^{i\theta} W_\lambda )g - f^\prime (e^{i\theta} W_\lambda)g\big) \big\rangle \big|\lesssim \lambda^{\frac{N}{4}} \|g\|_{\mathcal{E}}.$$
We can also apply the proof of the estimate of $M_{32}$ to deduce that
$$ \big|  \big\langle  e^{i\theta}   W_\lambda , i\big( f(e^{i\zeta} W_\mu +e^{i\theta} W_\lambda ) - f(e^{i\zeta} W_\mu)-f (e^{i\theta} W_\lambda) -f^\prime (e^{i\theta} W_\lambda)( e^{i\zeta} W_\mu) \big)\big\rangle\big| \lesssim \lambda^{\frac{N}{2}}.$$
We are left with estimating
$$\langle e^{i\theta } W_\lambda , if^\prime  (e^{i\theta} W_\lambda) e^{i\zeta} W_\mu \rangle = \mathcal{R} (ie^{i(\zeta -\theta)}) \int_{\R^N} W_\mu W_\lambda^{\frac{N+4}{N-4}}dx.$$
Notice that
$$|\mathcal{R} (ie^{i (\zeta -\theta)}) -1|\leq |\mathcal{R}(i e^{i (\zeta - \theta)} -e^{-i\theta} )| +|\mathcal{R} (e^{-i\theta}) -1| \leq  |\theta|^2 + |\zeta +\pi/2|.$$
Since $\big|\int_{\R^N} W_\mu W_\lambda^{\frac{N+4}{N-4}} dx\big|\lesssim \lambda^{\frac{N-4}{2}}$, we deduce that
$$ \left| \big(\mathcal{R} (ie^{i (\zeta -\theta)})  -1\big) \int_{\R^N} W_\mu W_\lambda^{\frac{N+4}{N-4}}dx \right|\lesssim \lambda^{\frac{N-4}{2}} (|\theta|^2 + |\zeta+\pi/2|). $$
Proceeding as for the last estimate in the proof of the estimate of $M_{33}$, one can show that
$$\left|\int_{\R^N} W_\lambda^{\frac{N+4}{N-4}} -\int_{\R^N} W_\mu W_\lambda^{\frac{N+4}{N-4}} dx\right| \lesssim \lambda^{\frac{N-2}{2}} + |\mu -1| \lambda^{\frac{N-4}{2}}.$$
Combining all the previous estimates and since 
$$\int_{\R^N} W_\lambda^{\frac{N+4}{N-4}} dx =C_1 \lambda^{\frac{N-4}{2}},$$
we finally deduce that
$$|B_4 - C_1 \lambda^{\frac{N-4}{2}}|\lesssim \|g\|^2_{\mathcal{E}} + \lambda^{\frac{N}{4}} \|g\|_{\mathcal{E}}+ \lambda^{\frac{N}{2}}+ |\theta|^2 \lambda^{\frac{N-4}{2}}+\lambda^{\frac{N-2}{2}} + |\mu -1| \lambda^{\frac{N-4}{2}}. $$
This implies that
$$|B_4|\lesssim \lambda^{\frac{N-4}{2}}+\|g\|^2_{\mathcal{E}}
\lesssim  |t|^{-\frac{N-4}{N-12}}.
$$

Let us denote $\beta=|t|^{-\frac{N-3}{2(N-12)}}$ and $\beta_1=|t|^{-\frac{N-4}{N-12}}$.
Then, we have
$$M=\begin{pmatrix}2\|W\|_{L^2}^2 +O(\beta) & O(\beta) & O(1) & O(1)\\ O(\beta) & 2\|W\|_{L^2}^2 +O(\beta) & O(1) & O(1)\\ O(\beta_1) &O(\beta_1) & 2\|W\|_{L^2}^2+O(\beta) & O(\beta) \\ O(\beta_1) & O(\beta_1) &O(\beta) &   2\|W\|_{L^2}^2+O(\beta)  \end{pmatrix}.$$
It follows that $M^{-1}$ exists and is of the same form as $M$ with $\|W\|_{L^2}^2$ substituted by $2^{-1}\|W\|_{L^2}^{-2}$. Since 
$$|B_i| \lesssim \lambda^{\frac{N-4}{2}}
\lesssim  |t|^{-\frac{N-4}{N-12}},$$
we deduce that 
\begin{equation}\label{zetamu}
|\zeta^\prime| +|\mu^\prime|\lesssim \lambda^{\frac{N-4}{2}}\lesssim |t|^{-\frac{N-4}{N-12}}.
\end{equation}
 We also deduce that
$$\left|\lambda^4 \theta^\prime - \left(\frac{1}{2\|W\|_{L^2}^2} +O(\beta)\right) B_3 \right|\lesssim \beta_1 \lambda^{\frac{N-4}{2}},$$
and
$$\left|\lambda^3 \lambda^\prime - \left(\frac{1}{2\|W\|_{L^2}^2} +O(\beta)\right) B_4 \right|\lesssim \beta_1 \lambda^{\frac{N-4}{2}}.$$
Since 
$$\big|B_3 - K+ C_2 \theta \lambda^{\frac{N-4}{2}}\big| \lesssim  \lambda^{\frac{N}{4} } \|g\|_{\mathcal{E}} +\lambda^{\frac{N}{2}}+  ( |\theta|^3 + |\zeta +\pi /2| ) \lambda^{\frac{N-4}{2}} +  \lambda^{\frac{N-2}{2}} +|1-\mu| \lambda^{\frac{N-4}{2}},$$
 and
$$\big|B_4- C_1 \lambda^{\frac{N-4}{2}}\big|\leq \|g\|^2_{\mathcal{E}} + \lambda^{\frac{N}{4}} \|g\|_{\mathcal{E}}+ \lambda^{\frac{N}{2}}+ |\theta|^2 \lambda^{\frac{N-4}{2}}+\lambda^{\frac{N-2}{2}} + |\mu -1| \lambda^{\frac{N-4}{2}},$$ 
we get that
\begin{align*}
\Big|\theta^\prime -&\frac{1}{2\lambda^4 \|W\|_{L^2}^2} \big(K- C_2 \theta \lambda^{\frac{N-4}{2}}\big) \Big| \lesssim \beta_1 \lambda^{\frac{N-4}{2} -4}\\
&+  \frac{1}{\lambda^4}\big(\lambda^{\frac{N}{4} } \|g\|_{\mathcal{E}} +\lambda^{\frac{N}{2}}+  ( |\theta|^3 + |\zeta +\pi /2| ) \lambda^{\frac{N-4}{2}} +  \lambda^{\frac{N-2}{2}} +|1-\mu| \lambda^{\frac{N-4}{2}} \big)  \\
&\lesssim |t|^{-\frac{N-10}{N-12}}
\end{align*}
and
\begin{align*}
\Big|\lambda^\prime -& \frac{C_1\lambda^{\frac{N-4}{2}}}{2\lambda^3 \|W\|_{L^2}^2} \Big| \lesssim \beta_1 \lambda^{\frac{N-4}{2}-3}\\
&+\frac{1}{\lambda^3} \big(\|g\|^2_{\mathcal{E}} + \lambda^{\frac{N}{4}} \|g\|_{\mathcal{E}}+ \lambda^{\frac{N}{2}}+ |\theta|^2 \lambda^{\frac{N-4}{2}}+\lambda^{\frac{N-2}{2}} + |\mu -1| \lambda^{\frac{N-4}{2}}\big)\\
&\lesssim |t|^{-\frac{N-9}{N-12}}.
\end{align*}

\end{proof}

We conclude this section by analysing the stable and unstable directions of the linearized flow (see \eqref{eigenvprel1} and \eqref{eigenvprel2}). To do so, we define \[
a_1^\pm (t)= \langle \alpha_{\zeta (t) , \mu (t)}^\pm , g(t) \rangle
\] 
and 
\[
a_2^\pm (t)= \langle \alpha_{\theta (t) , \lambda (t)}^\pm , g(t) \rangle,
\]  
with $a_i^-$ corresponding to the stable direction and $a_i^+$ to the unstable one. We will show that $\frac{d}{dt}a_i^\pm (t)$ behaves roughly as $\mp \frac{\nu}{\lambda_i (t)^4} a_i^\pm (t) $, where $\lambda_1 (t)=\mu (t)$ and $\lambda_2 (t)=\lambda (t)$ and $\nu$ is defined as in \eqref{eigenvprel1} and \eqref{eigenvprel2}.

\begin{lem}
\label{lemmodeai}
Under the same assumptions as in Proposition \ref{lem3.1}, we have, for $t\in [T,T_1]$,
\begin{equation}
\label{stcoe1}
\left|\frac{d}{dt}a_1^+ (t) - \frac{\nu}{\mu (t)^4} a_1^+ (t)\right| \leq \frac{c}{\mu(t)^4} |t|^{-\frac{N}{2(N-12)}},
\end{equation}
\begin{equation}
\label{stcoe2}
\left|\frac{d}{dt}a_1^- (t) + \frac{\nu}{\mu (t)^4} a_1^- (t)\right| \leq \frac{c}{\mu(t)^4} |t|^{-\frac{N}{2(N-12)}},
\end{equation}
\begin{equation}
\label{stcoe3}
\left|\frac{d}{dt}a_2^+ (t) - \frac{\nu}{\lambda (t)^4} a_2^+ (t)\right| \leq \frac{c}{\lambda (t)^4} |t|^{-\frac{N}{2(N-12)}},
\end{equation}
\begin{equation}
\label{stcoe4}
\left|\frac{d}{dt}a_2^- (t) + \frac{\nu}{\lambda (t)^4} a_2^- (t)\right| \leq \frac{c}{\lambda (t)^4} |t|^{-\frac{N}{2(N-12)}},
\end{equation}
with $c\to 0$ as $|T_0|\to \infty$.

\end{lem}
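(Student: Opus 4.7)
The plan is to differentiate each $a_i^\pm$ by the product rule, use the eigenvalue identities \eqref{eigenvprel1}--\eqref{eigenvprel2} to extract the main term $\pm(\nu/\lambda_i^4)a_i^\pm$, and estimate everything else. I treat $a_2^+$ in detail; the cases $a_2^-$, $a_1^\pm$ follow by the same argument, swapping $(\theta,\lambda)\leftrightarrow(\zeta,\mu)$ for $a_1^\pm$ and using \eqref{eigenvprel2} for the $-$ sign.

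By the product rule,
\begin{equation*}
\tfrac{d}{dt}a_2^+ = \langle \partial_t\alpha^+_{\theta,\lambda}, g\rangle + \langle \alpha^+_{\theta,\lambda}, \partial_t g\rangle.
\end{equation*}
The first piece expands as $\theta'\langle i\alpha^+_{\theta,\lambda}, g\rangle + \lambda'\langle \partial_\lambda\alpha^+_{\theta,\lambda}, g\rangle$, and H\"older combined with $\|\alpha^+_{\theta,\lambda}\|_{L^{2N/(N+4)}}\lesssim 1$, $\|\partial_\lambda\alpha^+_{\theta,\lambda}\|_{L^{2N/(N+4)}}\lesssim \lambda^{-1}$, Sobolev, and \eqref{thetalambdaprime} bounds it by $|t|^{-1}\|g\|_\mathcal{E}$, easily absorbed. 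For the second piece, I substitute the equation for $\partial_t g$ from Section 3 and rewrite the nonlinearity as $f'(e^{i\theta}W_\lambda)g + R$, where
\begin{equation*}
R = f(e^{i\zeta}W_\mu + e^{i\theta}W_\lambda + g) - f(e^{i\zeta}W_\mu) - f(e^{i\theta}W_\lambda) - f'(e^{i\theta}W_\lambda)g,
\end{equation*}
so that $\partial_t g = Z_{\theta,\lambda}(g) + iR + (\text{modulation})$. Then \eqref{eigenvprel1} immediately yields the main term $(\nu/\lambda^4)a_2^+$; the $\theta', \lambda'/\lambda$ modulation pieces vanish by the orthogonality of $\alpha^+_{\theta,\lambda}$ to $ie^{i\theta}W_\lambda$ and $e^{i\theta}\Lambda W_\lambda$; the cross-bubble $\zeta', \mu'/\mu$ pieces produce $O(\lambda^{N-4})$ via rescaling $y=x/\lambda$ (in which $W_\mu(\lambda y)=O(1)$) combined with $|\zeta'|+|\mu'|\lesssim\lambda^{(N-4)/2}$ from \eqref{zetamu}.

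The core of the argument is the estimate of $\langle \alpha^+_{\theta,\lambda}, iR\rangle$. I split $R = R_1+R_2+R_3$,
\begin{align*}
R_1 &= f(e^{i\zeta}W_\mu + e^{i\theta}W_\lambda + g) - f(e^{i\zeta}W_\mu + e^{i\theta}W_\lambda) - f'(e^{i\zeta}W_\mu + e^{i\theta}W_\lambda)g,\\
R_2 &= \big[f'(e^{i\zeta}W_\mu + e^{i\theta}W_\lambda) - f'(e^{i\theta}W_\lambda)\big]g,\\
R_3 &= f(e^{i\zeta}W_\mu + e^{i\theta}W_\lambda) - f(e^{i\zeta}W_\mu) - f(e^{i\theta}W_\lambda).
\end{align*}
For $R_1$, \eqref{fe3} gives $|R_1|\lesssim |g|^{(N+4)/(N-4)}$; H\"older with $\|\alpha^+_{\theta,\lambda}\|_{L^{2N/(N-4)}}\sim\lambda^{-4}$ and Sobolev then yield $\lambda^{-4}\|g\|_\mathcal{E}^{(N+4)/(N-4)}$. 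For $R_2$, I use that $\alpha^+_{\theta,\lambda}$ is concentrated on $|x|\lesssim\lambda$ where $W_\lambda\gg W_\mu$ to apply \eqref{fe1} in its second form, obtaining $|R_2|\lesssim W_\lambda^{-(N-12)/(N-4)}W_\mu|g|\lesssim \lambda^{(N-12)/2}|g|$ on the essential support, hence $\lesssim \lambda^{(N-12)/2}\|g\|_\mathcal{E}$. For $R_3$, \eqref{fe2} gives $|R_3|\lesssim W_\lambda^{8/(N-4)}W_\mu + W_\mu^{(N+4)/(N-4)}$, and the rescaling $y=x/\lambda$ produces $\lambda^{(N-12)/2}$ as the dominant contribution. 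Plugging in the bootstrap \eqref{mode1}--\eqref{mode5}, each of these bounds turns out to be at most $\lambda^{-4}|t|^{-N/(2(N-12))}\cdot |t|^{-\varepsilon(N)}$ for some $\varepsilon(N)>0$ when $N\geq 13$, giving the required $c\to 0$ as $|T_0|\to\infty$.

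The main obstacle I anticipate is the sharp tracking of $\lambda$-scaling in $R_2$ and $R_3$: the naive H\"older estimate against $\|\alpha^+_{\theta,\lambda}\|_{L^{2N/(N-4)}}\sim \lambda^{-4}$ is too lossy, and one must exploit the concentration of $\alpha^+_{\theta,\lambda}$ together with the Schwartz decay of $Y^{(1)}, Y^{(2)}$ via the scaling $y=x/\lambda$, effectively replacing $W_\mu$ by its value at the origin to gain the additional factor $\lambda^{(N-4)/2}$. The cases $a_1^\pm$ are structurally identical but technically easier, since $\alpha^\pm_{\zeta,\mu}$ sits at scale $\mu\approx 1$ and all cross-bubble interactions now involve the concentrating $W_\lambda$, which automatically produces favorable powers of $\lambda$.
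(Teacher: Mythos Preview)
Your proposal is correct and follows essentially the same approach as the paper: the same product-rule differentiation, the same use of the eigenvalue identities and orthogonality to isolate the main term and kill two modulation contributions, and the same three-term decomposition $R=R_1+R_2+R_3$ of the nonlinear remainder, estimated by the same Taylor bounds \eqref{fe1}--\eqref{fe3}. The only cosmetic differences are that the paper uses \eqref{fe4} (rather than \eqref{fe3}) for $R_1$, obtaining $\lambda^{-4}\|g\|_\mathcal{E}^2$ instead of your $\lambda^{-4}\|g\|_\mathcal{E}^{(N+4)/(N-4)}$, and handles the localization in $R_2,R_3$ via an explicit domain split (as in the $B_{31}$ estimate) rather than your direct rescaling/Schwartz-tail argument; both routes give sufficient bounds.
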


\begin{proof}
Using the definition of $\alpha_{\zeta, \mu}^+ $, we have 
\begin{align*}
\dfrac{d}{dt} a_1^+ &= -\frac{4\mu^\prime}{\mu} \Big\langle \frac{e^{i\zeta}}{\mu^4} \big(Y^{(2)}_\mu +i  Y_\mu^{(1)}\big),g \Big\rangle
-\frac{\mu^\prime}{\mu} \Big\langle \frac{e^{i\zeta}}{\mu^4} \big(\Lambda Y^{(2)}_\mu +i \Lambda Y_\mu^{(1)}\big),g \Big\rangle\\
&\quad +\zeta^\prime \Big\langle \frac{e^{i\zeta}}{\mu^4} \big(iY_\mu^{(2)} - Y_\mu^{(1)} \big) ,g  \Big\rangle + \langle \alpha_{\zeta ,\mu}^+ , \partial_t g \rangle \\
&=:-\frac{\mu^\prime}{\mu} \Big\langle \frac{e^{i\zeta}}{\mu^4} \big(\Lambda_{-2} Y^{(2)}_\mu +i \Lambda_{-2} Y_\mu^{(1)}\big),g \Big\rangle +\zeta^\prime \Big\langle \frac{e^{i\zeta}}{\mu^4} \big(iY_\mu^{(2)} - Y_\mu^{(1)} \big) ,g  \Big\rangle + \langle \alpha_{\zeta ,\mu}^+ , \partial_t g \rangle.
\end{align*}
Since $\|\mu^{-4} Y^{(i)}_\mu\|_{L^{\frac{2N}{N+4}}} \lesssim 1$, for $i=1,2$, the first two terms above can be estimated as
\begin{align}\label{first3}
&\Big| -\frac{\mu^\prime}{\mu} \Big\langle \frac{e^{i\zeta}}{\mu^4} \big(\Lambda_{-2} Y^{(2)}_\mu +i \Lambda_{-2} Y_\mu^{(1)}\big),g \Big\rangle +\zeta^\prime \Big\langle \frac{e^{i\zeta}}{\mu^4} \big(iY_\mu^{(2)} - Y_\mu^{(1)} \big) ,g  \Big\rangle \Big|\nonumber\\
&\qquad \lesssim (|\mu^\prime| + |\zeta^\prime|)  \|g\|_{\mathcal{E}} \lesssim |t|^{-\frac{3N-11}{2(N-12)}}
\end{align}
by using \eqref{zetamu} and \eqref{mode5}. 
To estimate the third term, we recall that
\begin{align}
\label{eqdeg}
\partial_t g &=-i \Delta^2 g +i\big(f(e^{i\zeta} W_\mu +e^{i\theta} W_\lambda +g) - f(e^{i\zeta} W_\mu ) - f(e^{i\theta }W_\lambda)\big)\\
&-  \zeta^\prime i e^{i\zeta} W_\mu + \frac{\mu^\prime}{\mu} e^{i\zeta}\Lambda W_\mu -\theta^\prime i e^{i\theta} W_\lambda + \frac{\lambda^\prime}{\lambda}e^{i\theta} \Lambda W_\lambda  .\nonumber
\end{align}
Next we observe that
\begin{equation}\label{nexttwo}
\big\langle \alpha^+_{\zeta ,\mu} , -\zeta^\prime i e^{i\zeta} W_\mu  \big\rangle =\Big\langle \alpha^+_{\zeta ,\mu} ,  \frac{\mu^\prime}{\mu} e^{i\zeta}\Lambda W_\mu \Big\rangle =0.
\end{equation}
Notice that $\|W_\lambda \|_{\dot{H}^s} \lesssim \lambda^{2-s}$ and $\|\alpha_{\zeta ,\mu}^+\|_{\dot{H}^2}\lesssim 1$. So, this together with \eqref{thetalambdaprime} yield to
\begin{equation}\label{thirdest}
|\langle \alpha^+_{\zeta ,\mu} ,\theta^\prime i e^{i\theta} W_\lambda  \rangle  | \lesssim |\theta^\prime| \|\alpha_{\zeta ,\mu}^+\|_{\dot{H}^2}\|W_\lambda \|_{\dot{H}^{-2}} \lesssim |\theta^\prime| \lambda^4
\lesssim |t|^{-\frac{^{N-4}}{N-12}}
\end{equation}
and
\begin{equation}\label{fourthest}
\Big| \Big\langle \alpha^+_{\zeta ,\mu} ,\frac{\lambda^\prime}{\lambda} \Lambda W_\lambda  \Big\rangle  \Big| \lesssim \Big|\frac{\lambda^\prime}{\lambda}\Big| \|\alpha_{\zeta ,\mu}^+\|_{\dot{H}^2}\|\Lambda W_\lambda \|_{\dot{H}^{-2}} \lesssim \Big|\frac{\lambda^\prime}{\lambda}\Big| \lambda^4
\lesssim |t|^{-\frac{N-4}{N-12}} . 
\end{equation}
Using the definition of $Z_{\zeta ,\mu}$, we remark that
\begin{align*}
&-i \Delta^2 g  +i\big(f(e^{i\zeta} W_\mu +e^{i\theta} W_\lambda +g) - f(e^{i\zeta} W_\mu ) - f(e^{i\theta }W_\lambda)\big)\\
&=Z_{\zeta ,\mu}g +i\big(f(e^{i\zeta} W_\mu +e^{i\theta} W_\lambda +g) - f(e^{i\zeta} W_\mu ) - f(e^{i\theta }W_\lambda) - f^\prime (e^{i\zeta} W_\mu) g\big).
\end{align*}
We recall that $\langle \alpha^+_{\zeta ,\mu} , Z_{\zeta ,\mu}g \rangle = \frac{\nu}{\mu^4} a_1^+$. As in the proof of the estimate of $|B_1|$, we obtain 
\begin{align}\label{fiftest}
&\big|\big\langle \alpha^+_{\zeta ,\mu} , i\big(f(e^{i\zeta} W_\mu +e^{i\theta} W_\lambda +g) - f(e^{i\zeta} W_\mu ) - f(e^{i\theta }W_\lambda) - f^\prime (e^{i\zeta} W_\mu) g\big) \big\rangle\big|\nonumber\\
&\qquad \lesssim \lambda^{\frac{N-4}{2}}
\lesssim |t|^{-\frac{N-4}{N-12}}.
\end{align}
So combining \eqref{first3}--\eqref{fiftest}, we find that
$$\Big|\frac{d}{dt}a_1^+ (t) - \frac{\nu}{\mu (t)^4} a_1^+ (t)\Big| \lesssim  
|t|^{-\frac{3N-11}{2(N-12)}}\ll \frac{1}{\mu(t)^4}|t|^{-\frac{N}{2(N-12)}}.
$$
This establishes \eqref{stcoe1}.

Next, let us turn to the proof of \eqref{stcoe3}. By definition of $\alpha_{\theta,\lambda}^+$, we find 
$$\frac{d}{dt} a_2^+ = -\frac{\lambda^\prime}{\lambda} \Big\langle \frac{e^{i\theta}}{\lambda^4} \big(\Lambda_{-2} Y^{(2)}_\lambda +i \Lambda_{-2} Y_\lambda^{(1)}\big),g \Big\rangle +\theta^\prime \Big\langle \frac{e^{i\theta}}{\lambda^4} \big(iY_\lambda^{(2)} - Y_\lambda^{(1)} \big) ,g  \Big\rangle + \langle \alpha_{\theta ,\lambda}^+ , \partial_t g \rangle .$$
Using that $\|\lambda^{-4}Y_\lambda^{(i)}\|_{L^{\frac{2N}{N+4}}}\lesssim 1$, we obtain
\begin{align}\label{2ndfirst3}
&\Big| -\frac{\lambda^\prime}{\lambda} \Big\langle \frac{e^{i\theta}}{\lambda^4} \big(\Lambda_{-2} Y^{(2)}_\lambda +i \Lambda_{-2} Y_\lambda^{(1)}\big),g \Big\rangle +\theta^\prime \Big\langle \frac{e^{i\theta}}{\lambda^4} \big(iY_\lambda^{(2)} - Y_\lambda^{(1)} \big) ,g  \Big\rangle \Big|\nonumber\\ 
&\qquad\lesssim  \Big(\Big|\frac{\lambda^\prime}{\lambda}\Big| + |\theta^\prime|\Big) \|g\|_{\mathcal{E}}\lesssim  |t|^{-\frac{3N-27}{2(N-12)}}.
\end{align}

So we are left with estimating the term $ \langle \alpha_{\theta ,\lambda}^+ , \partial_t g \rangle $. As previously, we will expand $\partial_t g$ using \eqref{eqdeg}. We get rid of two terms since
$$\langle \alpha^+_{\theta , \lambda} , - \theta^\prime i e^{i\theta}W_\lambda \rangle= \Big\langle \alpha_{\theta ,\lambda}^+ , \frac{\lambda^\prime}{\lambda} e^{i\theta} \Lambda W_\lambda \Big\rangle=0 .$$
We have
$$|\langle \alpha^+_{\theta ,\lambda} , - \zeta^\prime i e^{i\zeta} W_\mu \rangle |\leq \| \zeta^\prime i e^{i\zeta} W_\mu  \|_{L^\infty} \|\alpha_{\theta , \lambda}^+\|_{L^1}\lesssim |\zeta^\prime| \lambda^{\frac{N-4}{2}}. $$
We can estimate $\langle \alpha^+_{\theta ,\lambda} , \frac{\mu^\prime}{\mu}  e^{i\zeta} W_\mu \rangle $ in the same way.
Recall that $\langle \alpha_{\theta ,\lambda}^+ , Z_{\theta ,\lambda}g\rangle = \dfrac{\nu}{\lambda^4} a_2^+$. As for the estimate \eqref{fiftest}, our aim is to estimate
$$\lambda^4\big| \big\langle \alpha^+_{\theta ,\lambda} , i\big(f(e^{i\zeta} W_\mu +e^{i\theta} W_\lambda +g) - f(e^{i\zeta} W_\mu ) - f(e^{i\theta }W_\lambda) - f^\prime (e^{i\zeta} W_\mu) g\big) \big\rangle\big| .$$
Proceeding as in the estimate of $B_{3}$, we obtain
$$\lambda^4 \big|\big\langle \alpha_{\theta ,\lambda}^+ , i \big(f^\prime (e^{i\zeta} W_\mu +e^{i\theta} W_\lambda )- f^\prime (e^{i\theta } W_\lambda)\big)g\big\rangle\big|\lesssim \lambda^{\frac{N}{4}} \|g\|_{\mathcal{E}}.$$
Using the same kind of estimates as for $|B_{4}|$, we get
$$\lambda^4 \big|\big\langle \alpha_{\theta ,\lambda}^+ , i \big(f(e^{i\zeta} W_\mu +e^{i\theta} W_\lambda +g )-f(e^{i\zeta }W_\mu +e^{i\theta} W_\lambda)- f^\prime (e^{i\theta } W_\lambda +e^{i\zeta}W_\mu)g \big)\big\rangle\big|\lesssim \|g\|^2_{\mathcal{E}}.$$
Using a Taylor's expansion (see \eqref{fe2}), we have
$$\|f(e^{i\zeta} W_\mu +e^{i\theta} W_\lambda  )-f (e^{i\zeta} W_\mu) - f (e^{i\theta} W_\lambda )\|_{L^\infty} \lesssim \|W_\lambda^{\frac{8}{N-4}} W_\mu\|_{L^\infty}\lesssim \dfrac{1}{\lambda^4}. $$
Since $\| \alpha_{\theta ,\lambda}^+\|_{L^1} \lesssim \lambda^{\frac{N-4}{2}}$, we deduce that
$$\lambda^4  \big|\big\langle \alpha_{\theta ,\lambda}^+ , i \big(f (e^{i\zeta} W_\mu +e^{i\theta} W_\lambda  )-f (e^{i\zeta} W_\mu) - f (e^{i\theta} W_\lambda ) \big) \big\rangle \big| \lesssim  \lambda^{\frac{N-4}{2}}.$$
This yields to
\begin{align*}
\Big|\frac{d}{dt}a_2^+ (t) - \frac{\nu}{\lambda (t)^4} a_2^+ (t)\Big|& \leq \big(|\theta^\prime|+\big|\tfrac{\lambda^\prime}{\lambda}\big| \big) \|g\|_{\mathcal{E}}+ \big(|\zeta|^\prime +\big|\tfrac{\mu^\prime}{\mu}\big|\big) \lambda^{\frac{N+4}{2}} \\
&\qquad
+\frac{1}{\lambda^4}\Big(\lambda^{\frac{N-4}{2}}+ \lambda^{\frac{N}{4}} \|g\|_{\mathcal{E}}+
\|g\|_{\mathcal{E}}^2 \Big)\\
&\ll  \frac{1}{\lambda^4} |t|^{-\frac{N}{2(N-12)}}.
\end{align*}
\end{proof}

\section{Coercivity of the energy}

Our aim in this section is to get an estimate of the energy of $u= e^{i\zeta} W_\mu + e^{i\theta } W_\lambda +g$ under essential the same assumption as in the previous section. More precisely, we impose that  
\begin{equation}
\label{smallpara}
|\zeta + \pi /2| +|\mu -1| +|\theta | +\lambda +\|g\|_{\mathcal{E}} \ll 1,
\end{equation}
and $g$ satisfying \eqref{ortg}. We recall that
$$a_1^+ = \langle \alpha_{\zeta ,\mu}^+ ,g\rangle , a_1^- = \langle \alpha_{\zeta ,\mu}^- ,g\rangle,\ a_2^+ = \langle \alpha_{\theta ,\lambda}^+ ,g\rangle , a_2^- = \langle \alpha_{\theta ,\lambda}^- ,g \rangle .$$
The main result of this section will be the following proposition.
\begin{prop}
\label{ener}
 There exist constants $\eta ,C_0,C>0$ such that for all $u\in \mathcal{E}$ of the form $u= e^{i\zeta} W_\mu + e^{i\theta } W_\lambda +g$,
with 
\begin{equation}\label{eta}
|\zeta + \pi /2| +|\mu -1| +|\theta | +\lambda +\|g\|_{\mathcal{E}}\le\eta
\end{equation}  
and $g$ satisfying \eqref{ortg},
  we have
\begin{equation}
\label{propenere1}
|E(u)-2E(W)|\leq C \left((|\zeta +\pi /2| +|\mu -1| +|\theta| +\lambda )\lambda^{\frac{N-4}{2}} +\|g\|_{\mathcal{E}}^2 \right), 
\end{equation}
and
\begin{align}
\label{propenere2}
\|g\|_{\mathcal{E}}^2 &+C_0 \theta \lambda^{\frac{N-4}{2}} \leq C \Big(\lambda^{(N-4)/2} \big(|\zeta +\pi /2| +|\mu -1|+|\theta|^3 +\lambda\big) \nonumber\\
&+E(u) - 2E(W) +\sum_{j=1}^2 \big((a_j^+)^2 + (a_j^- )^2 \big)\Big).
\end{align}
\end{prop}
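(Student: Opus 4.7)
The plan is to expand $E(u)$ around the two-bubble profile $\phi := e^{i\zeta}W_\mu + e^{i\theta}W_\lambda$ and to exploit coercivity of the linearised energy on a suitable subspace. Setting $u = \phi + g$ and applying \eqref{fe6} pointwise gives
\[
E(u) = E(\phi) + \bigl\langle \Delta^2\phi - f(\phi),\, g\bigr\rangle + Q_\phi(g) + R(\phi,g),
\]
with $Q_\phi(g) := \tfrac12\!\int|\Delta g|^2 - \tfrac12\,\mathcal R\!\int \overline{f'(\phi)g}\,g$ and a cubic remainder $|R(\phi,g)| \lesssim \|g\|_{\mathcal E}^{2^\ast}\le \eta\,\|g\|_{\mathcal E}^2$ by Sobolev embedding.

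For \eqref{propenere1} I first analyse $E(\phi)$. Using $\Delta^2 W_\mu = f(W_\mu)$ and integration by parts,
\[
\tfrac12\!\int|\Delta\phi|^2 = \|\Delta W\|_{L^2}^2 + \cos(\zeta-\theta)\int W_\mu W_\lambda^{(N+4)/(N-4)}\,dx,
\]
while $\int[F(\phi) - F(W_\mu) - F(W_\lambda)]\,dx$ is a bubble-interaction integral handled just as $B_{12}$ and $B_{33}$ in the proof of Proposition~\ref{lem3.1}. Expanding $\cos(\zeta-\theta) = -\theta + O(|\zeta+\pi/2|+|\theta|^3)$ about the right angle and using $\int W_\mu W_\lambda^{(N+4)/(N-4)}\sim \lambda^{(N-4)/2}$ identifies the leading interaction:
\[
E(\phi) - 2E(W) = -C_0\,\theta\,\lambda^{(N-4)/2} + O\!\bigl((|\zeta+\pi/2|+|\mu-1|+|\theta|^3+\lambda)\lambda^{(N-4)/2}\bigr).
\]
The linear term $\langle \Delta^2\phi - f(\phi), g\rangle = \langle e^{i\zeta}f(W_\mu)+e^{i\theta}f(W_\lambda) - f(\phi), g\rangle$ is localised near the overlap region, and a split at $|x|\sim\sqrt\lambda$ combined with H\"older (as for $B_{12}$) bounds it by $\lesssim \lambda^{(N-4)/2}\|g\|_{\mathcal E}$, absorbed via Young's inequality. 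Together with $|Q_\phi(g)|\lesssim \|g\|_{\mathcal E}^2$ this yields \eqref{propenere1}.

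For \eqref{propenere2} the decisive point is coercivity of $Q_\phi$. With a smooth cut-off $\chi$ at scale $\sqrt\lambda$ decompose $g = \chi g + (1-\chi)g$; the potential $f'(\phi)$ differs from $f'(e^{i\theta}W_\lambda)$ on $\operatorname{supp}\chi$ by terms controlled as in the estimate of $B_{31}$, and from $f'(e^{i\zeta}W_\mu)$ on $\operatorname{supp}(1-\chi)$ by the same token. Each single-bubble quadratic form, after a phase rotation $h = e^{i\alpha}\tilde h_\sigma$ and rescaling, decouples into $\tfrac12\langle L^+\tilde h_1,\tilde h_1\rangle + \tfrac12\langle L^-\tilde h_2,\tilde h_2\rangle$. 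By the non-degeneracy of $W$ from \cite{MR1694339}, each $L^\pm$ is non-negative with one-dimensional kernel ($\Span(\Lambda W)$ for $L^+$, $\Span(W)$ for $L^-$) and a single negative eigendirection spanned by $Y^{(1)}$, resp.\ $Y^{(2)}$. The four orthogonality conditions \eqref{ortg} annihilate the four kernel directions, while the projections onto the four negative modes are, by \eqref{prelime1}--\eqref{prelime2}, linear combinations of $a_1^\pm$ and $a_2^\pm$ (up to the positive factor $\langle Y^{(1)}, Y^{(2)}\rangle$). Hence
\[
Q_\phi(g) \ge c\,\|g\|_{\mathcal E}^2 - C\sum_{j=1,2}\bigl((a_j^+)^2+(a_j^-)^2\bigr),
\]
and combining with the expansion of $E(\phi)$ and the bound on the linear term produces \eqref{propenere2} with the same constant $C_0$.

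The main obstacle will be the coercivity step: one must verify uniformly in the scale separation $\lambda\ll 1$ that the commutator of the cut-off with $\Delta^2$ and the off-diagonal remainder $f'(\phi)-f'(e^{i\zeta}W_\mu)-f'(e^{i\theta}W_\lambda)$ both contribute only $O(\lambda^{(N-4)/2}\|g\|_{\mathcal E}^2)$, absorbable using $\eta\ll 1$; and one must cleanly match the eight spectral obstructions (two kernel directions and two negative modes, each doubled by the two bubbles) to the four orthogonalities in \eqref{ortg} and the four coefficients $a_j^\pm$.
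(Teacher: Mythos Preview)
Your approach is essentially the paper's: Taylor-expand $E(u)$ about $\phi=e^{i\zeta}W_\mu+e^{i\theta}W_\lambda$, compute the leading interaction in $E(\phi)-2E(W)$, bound the linear term, and invoke coercivity of the linearised energy under the orthogonality conditions. The paper packages these steps as Lemmas~\ref{lemener1}--\ref{lemener3} together with the localized single-bubble coercivity of Lemma~\ref{lemcoer} and Proposition~\ref{propcoer111}; your cut-off at scale $\sqrt{\lambda}$ plays the same role. Two points need correction.

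\textbf{Sign of the interaction.} Your formula $E(\phi)-2E(W)=-C_0\theta\lambda^{(N-4)/2}+\cdots$ has the wrong sign. The kinetic cross term $\cos(\zeta-\theta)\int \Delta W_\mu\Delta W_\lambda$ equals, after integration by parts, $\mathcal R\!\int\overline{e^{i\theta}W_\lambda}\,f(e^{i\zeta}W_\mu)$, which \emph{cancels} the first-order Taylor contribution of the potential interaction in the region $|x|\gtrsim\sqrt\lambda$. What survives is the contribution from $|x|\lesssim\sqrt\lambda$, where the correct expansion is around $e^{i\theta}W_\lambda$ and gives $-\!\int_{|x|\le\sqrt\lambda}\mathcal R(\overline{e^{i\zeta}W_\mu}\,f(e^{i\theta}W_\lambda))\approx -(-\theta)C_1\lambda^{(N-4)/2}=+C_1\theta\lambda^{(N-4)/2}$. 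This sign is what makes \eqref{propenere2} useful downstream (it yields the upper bound \eqref{4-19} on $\theta$).

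\textbf{Spectral count.} The operator $L^-$ is non-negative with one-dimensional kernel $\Span(W)$; only $L^+$ carries a negative direction (with kernel $\Span(\Lambda W)$). So there are two negative modes in total, not four. The coercivity bound is nevertheless stated in terms of all four $(a_j^\pm)^2$ because the single negative projection $\langle Y^{(2)},g_1\rangle$ is controlled by $|a_j^+|+|a_j^-|$ via $Y^{(2)}=\tfrac12(\alpha^+ +\alpha^-)$; see Proposition~\ref{propcoer111}. Finally, the linear term is $O\big(\lambda^{(N+4)/4}\|g\|_{\mathcal E}\big)$ (Lemma~\ref{lemener2}), not $O\big(\lambda^{(N-4)/2}\|g\|_{\mathcal E}\big)$; for $N\ge 13$ the latter exponent is larger, so your stated bound is too optimistic, though either suffices after Young's inequality.
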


We decompose the proof into several lemma. At first, notice that by \eqref{fe6} we obtain a Taylor's expansion 
\begin{align}
\label{enere1}
&\Big|E(u) - E (e^{i\zeta} W_\mu +e^{i\theta} W_\lambda) - \langle DE (e^{i\zeta} W_\mu +e^{i\theta} W_\lambda ),g \rangle\\
&\qquad - \tfrac{1}{2} \langle D^2 E (e^{i\zeta} W_\mu +e^{i\theta} W_\lambda) g,g \rangle \Big|\nonumber\\
& \lesssim \|g\|_{\mathcal{E}}^{\frac{2N}{N-4}}.\nonumber
\end{align}
We begin by estimating the term $ E (e^{i\zeta} W_\mu +e^{i\theta} W_\lambda)  $.
\begin{lem}
\label{lemener1}
Under the assumptions of Proposition \ref{ener}, there exists a constant $C$ depending only on $N$ such that
\begin{align*}
&\left|E(e^{i\zeta} W_\mu +e^{i\theta} W_\lambda) - 2E(W) - C_1\theta  \lambda^{\frac{N-4}{2}}   \right|\\
& \leq C \lambda^{\frac{N-4}{2}} (|\zeta +\pi /2| +|\mu -1| +|\theta|^3 +\lambda),
\end{align*}
where
\[
C_1=\int_{\R^N} W^{\frac{N+4}{N-4}} dx.
\]
\end{lem}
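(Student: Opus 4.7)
The plan is to compute $E(u)=\tfrac{1}{2}\|\Delta u\|_{L^2}^2-\int F(u)$ directly for $u=v_1+v_2$ with $v_1=e^{i\zeta}W_\mu$, $v_2=e^{i\theta}W_\lambda$, separating the ``diagonal'' contribution $E(v_1)+E(v_2)=2E(W)$ (by scale invariance of $E$) from the ``cross'' interaction terms, and then isolating the leading $\theta\lambda^{(N-4)/2}$ coefficient via the Taylor expansion of $\cos(\theta-\zeta)$ near $\zeta=-\pi/2$, $\theta=0$.

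First I would handle the quadratic part: linearity of $\Delta$, scale invariance of $\|\Delta W_\cdot\|_{L^2}^2$, and integration by parts against the equation $\Delta^2 W_\mu = W_\mu^{(N+4)/(N-4)}$ give
$$\tfrac{1}{2}\|\Delta u\|_{L^2}^2 = \|\Delta W\|_{L^2}^2 + \cos(\theta-\zeta)\,I,\qquad I:=\int_{\R^N} W_\mu^{(N+4)/(N-4)}W_\lambda\,dx,$$
where $I$ is also equal to $\int W_\mu\,W_\lambda^{(N+4)/(N-4)}\,dx$ by the same integration by parts. A rescaling then shows $I = C_1\lambda^{(N-4)/2} + O(\lambda^{(N-4)/2}(|\mu-1|+\lambda))$, analogous to the computation $\int W_\lambda^{(N+4)/(N-4)}\,dx = C_1\lambda^{(N-4)/2}$ used in the $B_4$ estimate of Proposition \ref{lem3.1}.

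For the potential part I would split $\R^N = A\cup B$ with $A=\{|x|\le\sqrt\lambda\}$ (where $|v_2|\gg|v_1|$) and $B$ its complement (where $|v_1|\gg|v_2|$), and apply the first order Taylor expansion \eqref{fe5} centered at $v_1$ on $B$ and at $v_2$ on $A$:
\begin{align*}
\int_B\!\bigl(F(u)-F(v_1)\bigr) &= \cos(\theta-\zeta)\!\int_B W_\mu^{(N+4)/(N-4)}W_\lambda\,dx + O_B,\\
\int_A\!\bigl(F(u)-F(v_2)\bigr) &= \cos(\theta-\zeta)\!\int_A W_\lambda^{(N+4)/(N-4)}W_\mu\,dx + O_A,
\end{align*}
with remainders $O_A$, $O_B$ bounded by $L^p$ integrals of $|f'(v_i)||v_j|^2+|F(v_j)|$; these are of order $\lambda^{N/2}$ or better by a rescaling argument exactly like those performed for $B_{12}$ and $B_{32}$, and so are negligible compared with $\lambda^{(N-4)/2}$. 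Since $\int F(W_\mu)=\int F(W_\lambda)=\int F(W)$ by scale invariance, and since $\int_A F(v_1)$ and $\int_B F(v_2)$ are also $O(\lambda^{N/2})$ by volume and tail bounds, we conclude
$$\int F(u) - 2\!\int F(W) = \cos(\theta-\zeta)\Bigl[\int_B W_\mu^{(N+4)/(N-4)}W_\lambda + \int_A W_\lambda^{(N+4)/(N-4)}W_\mu\Bigr] + O(\lambda^{N/2}).$$

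Assembling, and using $\|\Delta W\|_{L^2}^2 - 2\!\int F(W) = 2E(W)$, we obtain
$$E(u)-2E(W) = \cos(\theta-\zeta)\Bigl(I - \int_B W_\mu^{(N+4)/(N-4)}W_\lambda - \int_A W_\lambda^{(N+4)/(N-4)}W_\mu\Bigr) + O(\lambda^{N/2}).$$
Using $I = \int_A W_\mu^{(N+4)/(N-4)}W_\lambda + \int_B W_\mu^{(N+4)/(N-4)}W_\lambda$ and the symmetry of $I$, the bracket rewrites as a combination that, after Taylor expanding $W_\mu$ about its value at the origin on $A$ (incurring an error $O((|\mu-1|+\lambda)\lambda^{(N-4)/2})$) and using the tail decay of $W_\lambda$ on $B$, reduces to $-C_1\lambda^{(N-4)/2}$ plus admissible errors. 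Combined with the elementary expansion $\cos(\theta-\zeta) = -(\theta-\alpha)+O(|\theta|^3+|\alpha|^3)$, where $\alpha=\zeta+\pi/2$, this yields the main term $C_1\,\theta\,\lambda^{(N-4)/2}$ and remainders of the precise form stated in the lemma.

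The principal difficulty lies in Step 3: to carry out the cross-integral bookkeeping so that the ``extra'' piece from the potential cancels against the kinetic $\cos(\theta-\zeta)I$ in such a way that the final coefficient of $\theta\lambda^{(N-4)/2}$ is exactly $C_1$ (rather than twice that or some other multiple). This requires careful rescaling in both the near-zone $A$---where $W_\mu$ is essentially a constant plus $O(|x|^2)=O(\lambda)$ on $A$, producing the $|\mu-1|\lambda^{(N-4)/2}$ and $\lambda\cdot\lambda^{(N-4)/2}$ error contributions---and the far-zone $B$, where $W_\mu$ is smooth but $W_\lambda$ has to be handled by its far-field expansion.
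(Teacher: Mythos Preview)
Your proposal is correct and uses the same ingredients as the paper: scale invariance for the diagonal terms, integration by parts to rewrite the kinetic cross term as $\cos(\theta-\zeta)\int W_\mu^{(N+4)/(N-4)}W_\lambda$, the near/far split at $|x|=\sqrt\lambda$, the Taylor estimate \eqref{fe5} for $F$ centered at the dominant bubble in each zone, and the expansion $\cos(\theta-\zeta)=-\theta+O(|\theta|^3+|\zeta+\pi/2|)$. The only difference is organizational: the paper immediately absorbs the kinetic cross term into the potential by observing that $\cos(\theta-\zeta)I=\int_{\R^N}\mathcal{R}\big(\overline{e^{i\theta}W_\lambda}\,f(e^{i\zeta}W_\mu)\big)$, yielding the single expression
\[
E(u)-2E(W)=-\int_{\R^N}\Big[F(v_1+v_2)-F(v_1)-F(v_2)-\mathcal{R}\big(\overline{v_2}f(v_1)\big)\Big]\,dx,
\]
whose integrand is already a second-order Taylor remainder on the far zone $B$. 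This sidesteps the cancellation you flag as the ``principal difficulty'' (your bracket $I-\int_B(\cdots)-\int_A(\cdots)$ is automatic in their formulation), so the bookkeeping is lighter; but your route lands in the same place with the correct constant $C_1$.
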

\begin{proof}
Expanding the energy, we find
\begin{align*}
E(e^{i\zeta} W_\mu +e^{i\theta} W_\lambda)) &= E(e^{i\zeta} W_\mu ) +E(e^{i\theta} W_\lambda) +\mathcal{R} \int_{\R^N} e^{i(\zeta - \theta)} \Delta W_\mu \Delta W_\lambda dx \\
& - \int_{\R^N} \big[F(e^{i\zeta} W_\mu + e^{i\theta} W_\lambda) -F(e^{i\zeta} W_\mu) - F(e^{i\theta} W_\lambda) \big] dx.
\end{align*}
By scaling invariance, we have $E(e^{i\zeta} W_\mu) =E(W)$. Integrating by parts, we get
$$\mathcal{R} \int_{\R^N} e^{i(\zeta -\theta ) } \Delta W_\mu \Delta W_\lambda dx =  \mathcal{R} \int_{\R^N} \overline{e^{i\theta} W_\lambda} f(e^{i\zeta} W_\mu) dx.$$
So, inserting this identity into the expansion of the energy, we have
\begin{align*}
E(e^{i\zeta} & W_\mu +e^{i\theta} W_\lambda)= 2E(W) \\
&-\int_{\R^N} \big[F (e^{i\zeta} W_\mu +e^{i\theta} W_\lambda) -F (e^{i\zeta} W_\mu) - F (e^{i\theta} W_\lambda) - \mathcal{R} ( \overline{e^{i\theta} W_\lambda } f(e^{i\zeta} W_\mu)\big] dx. 
\end{align*}
When $|x|\geq \sqrt{\lambda}$, using 
\eqref{fe5}, we get 
$$|F (e^{i\zeta} W_\mu +e^{i\theta} W_\lambda) -F (e^{i\zeta} W_\mu) - F (e^{i\theta} W_\lambda) - \mathcal{R} ( \overline{e^{i\theta} W_\lambda } f(e^{i\zeta} W_\mu))|\lesssim W_\lambda^2 .$$
Recalling that $W_\lambda= \alpha_N \lambda^{-\frac{N-4}{2}} (1+|\frac{x}{\lambda}|^2)^{- \frac{N-4}{2}}$, we have
$$\int_{|x|\geq \sqrt{\lambda}} W_\lambda^2 dx = \lambda^4 \int_{|x|\geq \frac{1}{\sqrt{\lambda}} } W^2 dx\lesssim \lambda^4 \int_{1/\sqrt{\lambda}}^\infty r^{8-2N}r^{N-1} dx \lesssim \lambda^{4+ \frac{N-8}{2}}= \lambda^{\frac{N}{2}}.   $$
On the other hand, when $|x|\leq \sqrt{\lambda}$, we have $|\mathcal{R} ( \overline{e^{i\theta} W_\lambda } f(e^{i\zeta} W_\mu)| \lesssim W_\lambda $ and

$$\int_{|x|\leq \sqrt{\lambda}} W_\lambda dx \lesssim \lambda^{\frac{N}{2} +2} \int_0^{1/\sqrt{\lambda}} r^{4-N}r^{N-1} dr \lesssim \lambda^{\frac{N}{2}} .$$
Proceeding in the same way, we can show that the term $F(e^{i\zeta}W_\mu)$ is also negligible. Taylor's expansion \eqref{fe5} gives
$$|F(e^{i\zeta} W_\mu + e^{i\theta} W_\lambda) - F (e^{i\theta} W_\lambda) -\mathcal{R} (  \overline{e^{i\zeta} W_\mu } f(e^{i\theta} W_\lambda) |\lesssim W_\lambda^{\frac{8}{N-4}}.$$
Notice that
$$\int_{|x|\leq \sqrt{\lambda}} W_\lambda^{\frac{8}{N-4}} dx \lesssim \lambda^{N-4} \int_0^{\frac{1}{\sqrt{\lambda}}}  r^{-8} r^{N-1} dr \lesssim \lambda^{N-4 -\frac{N-8}{2}}=\lambda^{\frac{N}{2}}.$$
At this point, we have shown that
$$\left| E(e^{i\zeta} W_\mu +e^{i\theta} W_\lambda )- 2E(W) - \int_{|x|\leq \sqrt{\lambda}} \mathcal{R} (  \overline{e^{i\zeta} W_\mu } f(e^{i\theta} W_\lambda))  dx \right|\lesssim \lambda^{\frac{N}{2}}. $$
So we only need to estimate $ \int_{|x|\leq \sqrt{\lambda}} \mathcal{R} (  \overline{e^{i\zeta} W_\mu } f(e^{i\theta} W_\lambda))  dx $. To do so, observe that
\begin{align*}
&\left|\int_{|x|\leq \sqrt{\lambda}} \mathcal{R} ( \overline{e^{i\zeta} W_\mu } f(e^{i\theta} W_\lambda ) )dx - \mathcal{R} (e^{i(\zeta - \theta)})\int_{\R^N} W_\lambda^{\frac{N+4}{N-4}} dx \right|\\
&\lesssim \int_{|x|\leq \sqrt{\lambda}} |W_\mu -1| W_\lambda^{\frac{N+4}{N-4}} dx + \int_{|x|\geq \sqrt{\lambda}} W_\lambda^{\frac{N+4}{N-4}} dx \\
&\lesssim (|\mu -1|+\lambda) \int_{|x|\leq \sqrt{\lambda}} W_\lambda^{\frac{N+4}{N-4}} dx +\int_{|x|\geq \sqrt{\lambda}} W_\lambda^{\frac{N+4}{N-4}} dx\\
&\lesssim  (|\mu -1|+\lambda) \lambda^{\frac{N-4}{2}}.
\end{align*}
Since
$$|\mathcal{R} (e^{i (\zeta -\theta )} )+\theta| \leq |\mathcal{R} (-i e^{-i\theta} ) +\theta|+ |e^{i(\zeta -\theta) } +i e^{-i\theta}|\lesssim  |\theta|^3 +|\zeta + \pi /2|$$
and
$$\int_{\R^N} W_\lambda^{\frac{N+4}{N-4}} dx = C_1 \lambda^{\frac{N-4}{2}},$$
we deduce that
\begin{equation}
\label{missingeq}
\left|\int_{|x|\leq \sqrt{\lambda}} \mathcal{R} ( \overline{e^{i\zeta} W_\mu } f(e^{i\theta} W_\lambda ) dx - C_1 \theta \lambda^{\frac{N-4}{2}} \right|\lesssim \lambda^{\frac{N-4}{2}} (|\zeta +\pi /2| +|\mu -1| +|\theta|^3 +\lambda). 
\end{equation}
\end{proof}

\begin{lem}
\label{lemener2}
We have
$$|\langle DE (e^{i\zeta} W_\mu + e^{i\theta} W_\lambda),g \rangle| \lesssim \lambda^{\frac{N+4}{4}} \|g\|_{\mathcal{E}} .$$
\end{lem}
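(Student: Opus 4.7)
\textbf{Proof plan for Lemma \ref{lemener2}.}

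The plan is to use the fact that each bubble individually solves the stationary equation, reducing the pairing to a nonlinear interaction term that can be estimated in dual Sobolev norm via a region split at the natural scale $|x|=\sqrt{\lambda}$.

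First I would write $\langle DE(v),g\rangle=\langle \Delta^2 v-f(v),g\rangle$ and integrate by parts. Setting $v=e^{i\zeta}W_\mu+e^{i\theta}W_\lambda$, the phase invariance of the nonlinearity $f(z)=|z|^{\frac{8}{N-4}}z$ gives $f(e^{i\zeta}W_\mu)=e^{i\zeta}f(W_\mu)=\Delta^2(e^{i\zeta}W_\mu)$ and likewise for the $\lambda$-bubble, so the linear part cancels and
\[
\langle DE(v),g\rangle=-\bigl\langle f(e^{i\zeta}W_\mu+e^{i\theta}W_\lambda)-f(e^{i\zeta}W_\mu)-f(e^{i\theta}W_\lambda),\,g\bigr\rangle.
\]
By H\"older with conjugate exponents $\frac{2N}{N+4},\frac{2N}{N-4}$ and the Sobolev embedding $\dot H^2\hookrightarrow L^{2N/(N-4)}$, it suffices to estimate
\[
\bigl\|f(e^{i\zeta}W_\mu+e^{i\theta}W_\lambda)-f(e^{i\zeta}W_\mu)-f(e^{i\theta}W_\lambda)\bigr\|_{L^{\frac{2N}{N+4}}}\lesssim \lambda^{\frac{N+4}{4}}.
\]

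To bound this $L^{\frac{2N}{N+4}}$-norm I split $\R^N$ at the radius $|x|=\sqrt\lambda$. On the outer region $|x|\geq\sqrt\lambda$ the slow bubble dominates ($W_\mu\gtrsim W_\lambda$), so I apply the Taylor estimate \eqref{fe2} with $z_1=e^{i\zeta}W_\mu,\ z_2=e^{i\theta}W_\lambda$ to get a pointwise bound $\lesssim W_\mu^{\frac{8}{N-4}}W_\lambda+W_\lambda^{\frac{N+4}{N-4}}$. On the inner region $|x|\leq\sqrt\lambda$ the concentrated bubble dominates, so I apply the same estimate with the roles of $z_1,z_2$ exchanged to obtain a pointwise bound $\lesssim W_\lambda^{\frac{8}{N-4}}W_\mu+W_\mu^{\frac{N+4}{N-4}}$.

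The main step — and the only place where the assumption $N\geq 13$ is really used — is the computation of the four resulting $L^{\frac{2N}{N+4}}$-integrals by rescaling $y=x/\lambda$. A direct calculation shows that, because $N\geq 13$, the exponents on $W$ that appear after rescaling make each integral convergent at infinity (respectively at the origin), and one obtains
\[
\|W_\lambda^{\frac{N+4}{N-4}}\|_{L^{\frac{2N}{N+4}}(|x|\geq\sqrt\lambda)}\lesssim \lambda^{\frac{N+4}{4}},\qquad \|W_\mu^{\frac{8}{N-4}}W_\lambda\|_{L^{\frac{2N}{N+4}}(|x|\geq\sqrt\lambda)}\lesssim \lambda^{\frac{N-4}{2}}\lambda^{\frac{12-N}{4}}=\lambda^{\frac{N+4}{4}},
\]
together with $\|W_\lambda^{\frac{8}{N-4}}W_\mu\|_{L^{\frac{2N}{N+4}}(|x|\leq\sqrt\lambda)}\lesssim\lambda^{\frac{N-4}{2}}\le\lambda^{\frac{N+4}{4}}$ and $\|W_\mu^{\frac{N+4}{N-4}}\|_{L^{\frac{2N}{N+4}}(|x|\leq\sqrt\lambda)}\lesssim(\sqrt\lambda)^{\frac{N+4}{2}}=\lambda^{\frac{N+4}{4}}$. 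Summing the four contributions and using H\"older yields the claimed bound.

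The technical obstacle is simply the careful bookkeeping in the four scaling integrals — especially checking that the threshold $N\geq 13$ produces exactly the matching exponent $\lambda^{(N+4)/4}$ on every piece, with the cutoff $\sqrt\lambda$ being precisely the value that balances the inner and outer contributions.
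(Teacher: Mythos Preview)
Your approach is exactly the paper's: reduce via Sobolev duality to an $L^{2N/(N+4)}$ bound on the interaction $f(e^{i\zeta}W_\mu+e^{i\theta}W_\lambda)-f(e^{i\zeta}W_\mu)-f(e^{i\theta}W_\lambda)$, split at $|x|=\sqrt\lambda$, and apply \eqref{fe2} on each side. One minor slip: the intermediate bound $\|W_\lambda^{8/(N-4)}W_\mu\|_{L^{2N/(N+4)}(|x|\leq\sqrt\lambda)}\lesssim\lambda^{(N-4)/2}$ is too optimistic --- the correct value is $\lambda^{(N+4)/4}$, which is still exactly what you need (the paper avoids the extra bookkeeping by absorbing both inner terms into $W_\lambda^{8/(N-4)}$ and both outer terms into $W_\lambda$, so only two integrals are computed).
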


\begin{proof}
Since $DE (e^{i \zeta} W_\mu) = DE (e^{i\theta} W_\lambda)=0$, we want to prove that
$$|\langle f(e^{i\zeta} W_\mu + e^{i\theta} W_\lambda) - f( e^{i\zeta} W_\mu ) -f( e^{i\theta} W_\lambda) ,g \rangle | \lesssim \lambda^{\frac{N+4}{4}} \|g\|_{\mathcal{E}}. $$
By the Sobolev's embedding $\dot{H}^{2} \hookrightarrow
L^{\frac{2N}{N-4}} $, this reduces to show that
$$ \| f(e^{i\zeta} W_\mu + e^{i\theta} W_\lambda) - f( e^{i\zeta} W_\mu ) -f( e^{i\theta} W_\lambda) \|_{L^{\frac{2N}{N+4}}} \lesssim \lambda^{\frac{N+4}{4}} .$$
When $|x|\leq \sqrt{\lambda}$, we have $W_\mu \lesssim W_\lambda$ so we deduce that
\begin{align*}
&| f(e^{i\zeta} W_\mu + e^{i\theta} W_\lambda) - f( e^{i\zeta} W_\mu ) -f( e^{i\theta} W_\lambda)| \\
&\lesssim  W_\lambda^{\frac{8}{N-4}} W_\mu +W_\mu^{\frac{N+4}{N-4}}\\
& \lesssim   W_\lambda^{\frac{8}{N-4}} W_\mu \lesssim W_\lambda^{\frac{8}{N-4}}. 
\end{align*}
We obtain
$$\|W_\lambda^{\frac{8}{N-4}} \|_{L^{\frac{2N}{N+4}} (|x|\leq \sqrt{\lambda}) } \lesssim \lambda^{\frac{N-4}{2}} \left(\int_0^{\frac{1}{\sqrt{\lambda}}} r^{-\frac{16N }{N+4}}  r^{N-1} dr\right)^{\frac{N+4}{2N}} \lesssim \lambda^{\frac{N+4}{4}} .$$
On the other hand, when $|x|\geq \sqrt{\lambda}$, we have $W_\lambda \leq W_\mu$ so
\begin{align*}
&| f(e^{i\zeta} W_\mu + e^{i\theta} W_\lambda) - f( e^{i\zeta} W_\mu ) -f( e^{i\theta} W_\lambda)|\\
& \lesssim  W_\mu^{\frac{8}{N-4}} W_\lambda +W_\lambda^{\frac{N+4}{N-4}}\\
& \lesssim  W_\mu^{\frac{8}{N-4}} W_\lambda \lesssim W_\lambda. 
\end{align*}
We get
$$\|W_\lambda\|_{L^{\frac{2N}{N+4} } (|x|\geq \sqrt{\lambda}) } \lesssim \lambda^4  \left(\int_{\frac{1}{\sqrt{\lambda}}}^\infty r^{- \frac{2N (N-4)}{N+4}}  r^{N-1}dr \right)^{\frac{N+4}{2N}} \lesssim \lambda^{\frac{N+4}{4}}. $$
This concludes the proof.
\end{proof}

In order to deal with the quadratic term of \eqref{enere1}, we analyse the coercivity of $L^\pm$.

\begin{lem}
\label{lemcoer}
There exist constants $c,C>0$ such that for any real valued $g\in \mathcal{E}$, we have
\begin{equation}
\label{lemcoere1}
\langle g,L^+ g\rangle \geq c \int_{\R^N} |\Delta g|^2 dx - C (\langle W,g\rangle^2 + \langle Y^{(2)} ,g \rangle^2 ),
\end{equation}
and
$$\langle g,L^- g\rangle \geq c \int_{\R^N} |\Delta g|^2 dx - C \langle \Lambda W,g\rangle^2 .$$
If $r_1>0$ is large enough, then
\begin{align}
\label{coercloc1}
&(1-2c) \int_{|x|\leq r_1} |\Delta g|^2 dx +c \int_{|x|\geq r_1} |\Delta g|^2 dx - \frac{N+4}{N-4} \int_{\R^N}   W^{8/(N-4)} |g|^2 dx \nonumber \\
&\geq - C (\langle W,g\rangle^2 + \langle Y^{(2)} ,g \rangle^2).
\end{align}
\end{lem}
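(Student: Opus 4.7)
The plan is to treat the three estimates separately, using the spectral structure of $L^\pm$ on the radial class $\mathcal{E}$.

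\textbf{Coercivity of $L^-$.} Since $L^-\geq 0$ on $\mathcal{E}$ with $\ker L^- = \mathrm{Span}(W)$ (by the non-degeneracy result \cite{MR1694339}), and since the potential $W^{8/(N-4)}$ is a relatively compact perturbation of $\Delta^2$ in the quadratic-form sense, a standard spectral-gap argument yields coercivity of the form $\langle L^- h,h\rangle \geq c\|h\|_{\mathcal{E}}^2$ on any closed complement of $\mathrm{Span}(W)$ in $\mathcal{E}$. Because $\langle W,\Lambda W\rangle = -2\|W\|_{L^2}^2 \neq 0$ (a direct scaling computation), the hyperplane $\{h\in\mathcal{E}:\langle \Lambda W,h\rangle=0\}$ is such a complement. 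Decomposing $g=\alpha W + g^\perp$ with $\alpha = \langle \Lambda W,g\rangle/\langle \Lambda W,W\rangle$ and using $L^-W=0$ gives the stated inequality after absorbing $\alpha^2\|W\|_{\mathcal{E}}^2$ into $C\langle \Lambda W,g\rangle^2$.

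\textbf{Coercivity of $L^+$.} By \cite{MR1694339}, $\ker L^+ = \mathrm{Span}(\Lambda W)$ on radials, and the variational characterization of $W$ (minimizer of the Sobolev quotient) forces Morse index exactly one, so $L^+$ has a single negative eigenvalue $-\rho$ with eigenfunction $\phi_-$. Hence $L^+$ is coercive on any closed complement of $V_- := \mathrm{Span}(\phi_-,\Lambda W)$. I would then verify that the codimension-$2$ subspace $W_\perp := \{h:\langle W,h\rangle=\langle Y^{(2)},h\rangle=0\}$ is transverse to $V_-$. Any element $a\phi_- + b\Lambda W \in V_-\cap W_\perp$ must solve
\begin{align*}
a\langle W,\phi_-\rangle + b\langle W,\Lambda W\rangle &= 0,\\
a\langle Y^{(2)},\phi_-\rangle + b\langle Y^{(2)},\Lambda W\rangle &= 0.
\end{align*}
The identity $\langle Y^{(2)},\Lambda W\rangle = 0$ was established right after \eqref{prelime2}, and $\langle W,\Lambda W\rangle \neq 0$, so the system is lower-triangular with determinant proportional to $\langle Y^{(2)},\phi_-\rangle$. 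Once one knows $\langle Y^{(2)},\phi_-\rangle \neq 0$, this forces $a=b=0$, and standard projection estimates (with the components along $\phi_-$ and $\Lambda W$ controlled linearly by $\langle W,g\rangle$ and $\langle Y^{(2)},g\rangle$) deliver \eqref{lemcoere1}.

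\textbf{Localized estimate \eqref{coercloc1}.} The strategy is to introduce a smooth radial cutoff $\chi_{r_1}$ equal to $1$ on $B_{r_1/2}$ and supported in $B_{r_1}$, and to apply \eqref{lemcoere1} to $\chi_{r_1}g$. Simultaneously, the pointwise decay $W(x)\simeq|x|^{-(N-4)}$ together with H\"older and Sobolev give
\[
\int_{|x|\geq r_1} W^{8/(N-4)} |g|^2\, dx \leq \|W^{8/(N-4)}\|_{L^{N/4}(\{|x|\geq r_1\})}\|g\|^2_{L^{2N/(N-4)}} \leq \varepsilon(r_1)\|g\|^2_{\mathcal{E}},
\]
with $\varepsilon(r_1)\to 0$ as $r_1\to\infty$. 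For $r_1$ large enough the commutator errors $[\Delta^2,\chi_{r_1}]$ produced by the truncation (which are lower order in derivatives of $g$ but spatially concentrated near $|x|\sim r_1$) are absorbed into the small reserve $c\int_{|x|\geq r_1}|\Delta g|^2\, dx$ via Hardy-type inequalities, yielding the claimed inequality.

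\textbf{Main obstacle.} The crux is Step 2, specifically verifying that $\langle Y^{(2)},\phi_-\rangle \neq 0$. In the second-order NLS setting this comes from a Perron-Frobenius positivity argument on the lowest eigenvector of $L^{-,1/2}L^+L^{-,1/2}$; in the biharmonic setting the absence of a maximum principle makes the positivity step substantially more delicate, and this is precisely where the construction of $(Y^{(1)},Y^{(2)})$ in \eqref{defYs} (adapted from \cite{MR2491692}) does the real work by identifying $Y^{(2)}$ with the ground state of the coupled problem, forcing non-trivial overlap with $\phi_-$. The localization step is more routine, but the fourth-order commutators require a bit of care.
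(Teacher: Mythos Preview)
Your treatment of $L^-$ and of the localized estimate \eqref{coercloc1} is essentially the paper's argument (cutoff plus Hardy plus $L^{N/4}$ decay of $W^{8/(N-4)}$), so those parts are fine.

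For $L^+$, however, you leave a genuine gap: you reduce everything to $\langle Y^{(2)},\phi_-\rangle\neq 0$ and then do not prove it, only gesturing at a Perron--Frobenius mechanism that, as you yourself note, is unavailable in the biharmonic setting. The paper resolves this cleanly without ever touching $\phi_-$. The key computation, using only \eqref{defYs} and the positivity of $L^-$, is
\[
\langle Y^{(1)},L^+Y^{(1)}\rangle=-\nu\langle Y^{(1)},Y^{(2)}\rangle=-\langle Y^{(2)},L^-Y^{(2)}\rangle<0,
\]
the strict inequality holding because $Y^{(2)}\notin\mathrm{Span}(W)=\ker L^-$. So $Y^{(1)}$ is itself a negative direction for $L^+$. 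Now suppose some $g$ with $\langle Y^{(2)},g\rangle=0$ also satisfied $\langle g,L^+g\rangle<0$. Then the cross term in the quadratic form on $\mathrm{Span}(g,Y^{(1)})$ is $2ab\langle g,L^+Y^{(1)}\rangle=-2ab\,\nu\langle g,Y^{(2)}\rangle=0$, so the form is diagonal and strictly negative on a genuinely two-dimensional subspace (note $g\neq cY^{(1)}$ since $\langle Y^{(1)},Y^{(2)}\rangle>0$). This contradicts Morse index one. Hence $L^+\geq 0$ on $\{Y^{(2)}\}^\perp$, and a short compactness/weak-limit argument (normalizing $\|g_n\|_{\mathcal{E}}=1$, passing to a weak limit, and using $\ker L^+=\mathrm{Span}(\Lambda W)$ together with $\langle W,\Lambda W\rangle\neq 0$) upgrades this to \eqref{lemcoere1}.

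Note that this argument also retroactively settles your obstacle: since $L^+\geq 0$ on $\{Y^{(2)}\}^\perp$ but $\langle\phi_-,L^+\phi_-\rangle<0$, necessarily $\langle Y^{(2)},\phi_-\rangle\neq 0$. So your route is salvageable, but the missing ingredient is exactly the two-line computation above, not any deeper structural fact about the ground state of a coupled problem.
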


\begin{proof}
In a first time, we show that if $g\in \mathcal{E}$ is such that $\langle Y^{(2)} ,g\rangle =0$ then $\langle g ,L^+ g \rangle \geq 0$. We proceed by contradiction. Let $a,b\neq 0 \in \R$. Since $Y^{(2)} \neq W$, we have, by \eqref{defYs}, 
$$\langle Y^{(1)} , L^+ Y^{(1)}\rangle =- \langle L^- Y^{(2)} , Y^{(2)} \rangle <0.$$
This implies that
$$\langle ag +b Y^{(1)} , L^+ (ag + b Y^{(1)})  \rangle =a^2 \langle g, L^+ g \rangle - 2 ab \nu \langle g, Y^{(2)} \rangle +b^2 \langle Y^{(1)} ,L^+ Y^{(1)} \rangle <0 .$$
This is impossible since $L^+$ has only one negative direction. 

 Suppose that \eqref{lemcoere1} is false. Then there exists a sequence $g_n \in \mathcal{E}$ such that $\|g_n\|_{\mathcal{E}}=1$ and 
$$\langle g_n ,L^+ g_n\rangle \leq c_n - C_n  (\langle W,g_n\rangle^2 + \langle Y^{(2)} ,g_n \rangle^2 ),\ c_n \rightarrow 0,\ C_n\rightarrow \infty.$$
We can assume that $g_n \rightharpoonup g\in \mathcal{E}$. Since $|\langle g_n ,L^+ g_n \rangle| \lesssim \|g_n\|^2_{\mathcal{E}} = 1$, we deduce that
$\langle W,g\rangle = \langle Y^{(2)} ,g \rangle =0$. We also have $\langle g_n ,V^+ g_n \rangle \rightarrow \langle g ,V^+ g \rangle$ hence
$$\langle g, L^+ g \rangle \leq \liminf_n \langle g_n ,L^+ g_n\rangle \leq \liminf_n c_n =0.$$
So, we get that $\langle h, L^+ g \rangle =0$, for all $h\in \mathcal{E}$ such that $\langle h, Y^{(2)}\rangle =0$. This implies that $\langle Y^{(1)} , L^+ g \rangle =-\nu \langle Y^{(2)} , g\rangle =0$. Since $\langle Y^{(1)} , Y^{(2)} \rangle \neq 0$, we in fact get that $\langle h,L^+ g\rangle =0$ for all $h\in \mathcal{E}$. This implies that $g= \Lambda W$ but since $\langle W, \Lambda W \rangle \neq 0$ so we get a contradiction.\\
Next, we prove \eqref{coercloc1}. Let $\varphi$ be a cutoff function such that $\varphi \equiv 1$ in $B_{r_1}$ and with support on $B_{r_1^2}$ for $r_1$ large enough so that $\|\nabla^{(i)} \varphi \|_{L^\infty} \ll r_1^{-i}$, for $i=1,2$. Integrating by parts, we find
$$\int_{\R^N} |\Delta (\varphi u)|^2 dx = \int_{\R^N}[ \varphi^2 |\Delta u|^2 dx + u^2 |\Delta \varphi|^2 + 4 |\nabla\varphi|^2  |\nabla u|^2 ]dx.    $$
Using the properties of $\varphi$ and Hardy's inequality, we get that, for any $c$, 
$$\int_{\R^N}  u^2 |\Delta \varphi|^2 dx\leq \|\nabla^{(2)} \varphi \|_{L^\infty}^2 r_1^4 \int_{|x|\geq r_1}  \dfrac{u^2}{|x|^4} dx \leq c \int_{|x|\geq r_1} |\Delta u|^2 dx. $$
We can deal with the term $\int_{\R^N} |\nabla\varphi|^2  |\nabla u|^2 dx$ in the same way. Denoting by $V(x)=W^{8/(N-4)}(x) $ and noticing that $V\in L^{\frac{N}{4}}(\R^N)$ since $V(x) \approx |x|^{-8}$, we have, using H\" older's and Sobolev's inequalities,
\begin{align*}
\int_{\R^N} V g^2 dx& \leq  \int_{\R^N} V (\varphi g)^2 + \left(\int_{|x|\geq r_1} V^{\frac{N}{4}}dx \right)^{\frac{4}{N}}   \left(\int_{\R^N}  g^{\frac{2N}{N-4}} dx\right)^\frac{N-4}{N}\\
&\leq   \int_{\R^N} V (\varphi g)^2 + c \int_{|x|\geq r_1} |\Delta u|^2 dx .
\end{align*}
We use the same kind of estimate for the last two terms. We have $\langle  Y , \varphi g\rangle^2 \leq \langle  Y , g\rangle^2 + \langle  Y , \varphi g -g \rangle^2$. To control the last term, in the same way as we previously did, we use H\" older's and Sobolev's inequalities to deduce that
$$\langle  Y , \varphi g -g \rangle^2 \leq  c \int_{|x|\geq r_1} |\Delta u|^2 dx  ,$$
 since $Y\in L^{2N/(N+4)}(\R^N)$ (the dual of the critical $H^2$ exponent).
\end{proof} 

We use the previous lemma to study the coercivity of the linearization around $e^{i\theta} W_\lambda$.
\begin{prop}
\label{propcoer111}
There exist constants $c,C>0$ such that for any $\theta \in \R$, $\lambda >0$ and for any complex valued radial $g\in \mathcal{E}$, there holds
\begin{align*}
&\int_{\R^N} |\Delta g|^2 dx - \mathcal{R} \int_{\R^N} \bar{g} f^\prime (e^{i\theta} W_\lambda) g dx\\
& \geq c \int_{\R^N} |\Delta g|^2 dx - C (\langle \lambda^{-2} e^{i\theta} W_\lambda , g\rangle^2 + \langle \lambda^{-2} i e^{i\theta} \Lambda W_\lambda ,g \rangle^2 +\langle \alpha_{\theta ,\lambda}^+ ,g \rangle^2 +\langle \alpha_{\theta ,\lambda}^- ,g\rangle^2)
\end{align*}
If $r_1 >0$ is large enough, then we have
\begin{align*}
&(1-2c)\int_{|x|\leq r_1} |\Delta g|^2 dx+c \int_{|x_1| \geq r_1} |\Delta g|^2 dx - \mathcal{R} \int_{\R^N} \bar{g} f^\prime (e^{i\theta} W_\lambda) g dx\\
& \geq c \int_{\R^N} |\Delta g|^2 dx - C (\langle \lambda^{-2} e^{i\theta} W_\lambda , g\rangle^2 + \langle \lambda^{-2} i e^{i\theta} \Lambda W_\lambda ,g \rangle^2 +\langle \alpha_{\theta ,\lambda}^+ ,g \rangle^2 +\langle \alpha_{\theta ,\lambda}^- ,g\rangle^2)
\end{align*}
If $r_2>0$ is small enough, we have
\begin{align*}
&(1-2c)\int_{|x|\geq r_2} |\Delta g|^2 dx+c\int_{|x|\leq r_2} |\Delta g|^2 dx - \mathcal{R} \int_{\R^N} \bar{g} f^\prime (e^{i\theta} W_\lambda) g dx\\
& \geq c \int_{\R^N} |\Delta g|^2 dx - C (\langle \lambda^{-2} e^{i\theta} W_\lambda , g\rangle^2 + \langle \lambda^{-2} i e^{i\theta} \Lambda W_\lambda ,g \rangle^2 +\langle \alpha_{\theta ,\lambda}^+ ,g \rangle^2 +\langle \alpha_{\theta ,\lambda}^- ,g\rangle^2)
\end{align*}

\end{prop}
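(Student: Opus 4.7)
My plan is to reduce the complex, $\lambda$-dependent quadratic form on the left to two decoupled, real, unscaled quadratic forms governed by $L^+$ and $L^-$, and then invoke Lemma \ref{lemcoer}. The reduction has three ingredients: a gauge change removing $e^{i\theta}$, the $\dot H^2$-isometric scaling $u\mapsto u_\lambda$ removing $\lambda$, and splitting into real and imaginary parts to diagonalize. Concretely, write $g=e^{i\theta}h$ with $h=h_1+ih_2$; using the formula $f'(z)z_1=|z|^{8/(N-4)}(z_1+\tfrac{8}{N-4}z\mathcal{R}(z_1/z))$ from the preliminaries one checks that $\mathcal{R}(\bar g\,f'(e^{i\theta}W_\lambda)\,g)=W_\lambda^{8/(N-4)}(\tfrac{N+4}{N-4}h_1^2+h_2^2)$. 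Setting then $h_i=(\tilde h_i)_\lambda$, both the $\dot H^2$ integral and the $W^{8/(N-4)}$-weighted $L^2$ integral are preserved by scaling, so the left-hand side of the proposition becomes exactly $\langle \tilde h_1,L^+\tilde h_1\rangle+\langle \tilde h_2,L^-\tilde h_2\rangle$.

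Applying Lemma \ref{lemcoer} to each summand produces a lower bound of the form $c(\|\Delta\tilde h_1\|_{L^2}^2+\|\Delta\tilde h_2\|_{L^2}^2)-C(\langle W,\tilde h_1\rangle^2+\langle Y^{(2)},\tilde h_1\rangle^2+\langle \Lambda W,\tilde h_2\rangle^2)$. The leading term equals $c\|g\|_{\mathcal{E}}^2$ by scale invariance. For the error terms one translates back: for any real radial $u$, a direct change of variable yields $\langle e^{i\theta}u_\lambda,g\rangle=\lambda^4\langle u,\tilde h_1\rangle$ and $\langle ie^{i\theta}u_\lambda,g\rangle=\lambda^4\langle u,\tilde h_2\rangle$. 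Combined with the definition of $\alpha^{\pm}_{\theta,\lambda}$, this identifies $\langle W,\tilde h_1\rangle$, $\langle\Lambda W,\tilde h_2\rangle$ with (appropriately normalized) projections onto $e^{i\theta}W_\lambda$ and $ie^{i\theta}\Lambda W_\lambda$, and gives $\langle Y^{(2)},\tilde h_1\rangle=\tfrac12(\langle\alpha^+_{\theta,\lambda},g\rangle+\langle\alpha^-_{\theta,\lambda},g\rangle)$, recovering the four bad directions of the first inequality.

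For the two localized statements, the same rescaling maps $\{|x|\leq r\}$ to $\{|y|\leq r/\lambda\}$. Since \eqref{eta} forces $\lambda\leq\eta\ll 1$, taking $r=r_1$ large makes $r_1/\lambda$ large as well, so I can apply \eqref{coercloc1} to $\tilde h_1$ directly; the analogous localized inequality for $L^-$ on $\tilde h_2$ is obtained by the identical cutoff-plus-Hardy computation as in the proof of Lemma \ref{lemcoer}, using $\Lambda W$ in place of $(W,Y^{(2)})$. The ``$r_2$ small'' statement is proved dually: replace the cutoff $\varphi$ supported in a large ball by one supported outside a small ball around the origin, so that the interior integral of $|\Delta\tilde h|^2$ is now the one being downweighted. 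Since $W^{8/(N-4)}$ is bounded near the origin, its $L^{N/4}$ mass on a small ball is small and the potential term can absorb the loss, while the functionals $\langle W,\cdot\rangle$, $\langle\Lambda W,\cdot\rangle$, $\langle Y^{(2)},\cdot\rangle$ are all in $L^{2N/(N+4)}$ so H\"older controls the difference $\langle\cdot,(1-\varphi)\tilde h\rangle$ by $\varepsilon\|\Delta\tilde h\|_{L^2}$ as in the proof of \eqref{coercloc1}.

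The main obstacle I expect is the ``$r_2$ small'' case: unlike the large-ball localization, which is essentially a statement about what happens at infinity and is the one actually proved in Lemma \ref{lemcoer}, here one needs a dual cutoff adapted to a small ball around the origin, and one must carefully check that the error produced by restricting the potential and the three test functionals to $\{|x|\geq r_2\}$ can be made arbitrarily small uniformly in $g$. This is the only place where the explicit decay of $W$ at the origin (and of $Y^{(1)},Y^{(2)}\in\mathcal{S}$) enters in an essential way; once it is in place, the rest of the argument is a scale-invariant translation of Lemma \ref{lemcoer}.
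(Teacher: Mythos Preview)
Your approach is essentially the same as the paper's: reduce by gauge and scaling to $\theta=0$, $\lambda=1$, split into real and imaginary parts so that the quadratic form decouples as $\langle\tilde h_1,L^+\tilde h_1\rangle+\langle\tilde h_2,L^-\tilde h_2\rangle$, apply Lemma~\ref{lemcoer} to each piece, and translate the error directions back via $\langle W,\tilde h_1\rangle=\langle e^{i\theta}W_\lambda,g\rangle$, $\langle\Lambda W,\tilde h_2\rangle=\langle ie^{i\theta}\Lambda W_\lambda,g\rangle$, and $\langle Y^{(2)},\tilde h_1\rangle=\tfrac12(\langle\alpha^+_{\theta,\lambda},g\rangle+\langle\alpha^-_{\theta,\lambda},g\rangle)$. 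One small remark: you invoke \eqref{eta} to force $\lambda$ small in the localized versions, but the proposition is stated for all $\lambda>0$; the paper's ``WLOG $\lambda=1$'' really means the thresholds $r_1,r_2$ scale with $\lambda$ (consistently with the later application, where the region is $|x|\leq R\lambda$), so your appeal to \eqref{eta} is unnecessary and the localized inequalities of Lemma~\ref{lemcoer} transfer directly under the rescaling.
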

 \begin{proof}
We can assume without loss of generality that $\theta=0$ and $\lambda =1$. Let $g=g_1 +i g_2$. We have
$$f^\prime (W) (g_1 +ig_2)=W^{\frac{8}{N-4}} \left(\frac{N+4}{N-4} g_1 +i  g_2 \right).$$
So there holds $\mathcal{R} \int_{\R^N} \bar{g} f^\prime (W) g dx = \int_{\R^N } W^{\frac{8}{N-4}} \big(\frac{N+4}{N-4}  g_1^2 + g_2^2\big) dx.$ We have $\langle W,g\rangle = \langle W ,g_1\rangle$ and $\langle i \Lambda W,g\rangle = \langle \Lambda W ,g_2\rangle$. We have $Y^{(2)} = \frac{\alpha_{0,1}^+ +\alpha_{0,1}^- }{2}$, so
$$\langle Y^{(2)} ,g_1 \rangle^2 = \langle Y^{(2)} ,g \rangle^2 \leq \frac{\langle \alpha_{0,1}^+ ,g\rangle^2 +\langle \alpha_{0,1}^- ,g\rangle^2 }{2} $$
We get the result by applying the previous result.
\end{proof}

Next, we deduce a coercivity result for the linearization around our sum of two bubbles.
\begin{lem}
\label{lemener3}
There exist $\eta ,C>0$ such that if $\lambda \leq \eta \mu$, then for all $g\in \mathcal{E}$ satisfying \eqref{ortg} and \eqref{eta} 
 there holds
$$\frac{1}{C} \|g\|_{\mathcal{E}}^2 \leq \frac{1}{2} \langle D^2 E (e^{i\zeta}W_\mu +e^{i\theta} W_\lambda )g,g\rangle +2 \big((a_1^+)^2 + (a_1^-)^2 +(a_2^+)^2 +(a_2^-)^2\big)\leq C \|g\|_{\mathcal{E}}^2$$
\end{lem}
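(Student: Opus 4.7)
The upper bound is immediate: since $|f^\prime(e^{i\zeta}W_\mu+e^{i\theta}W_\lambda)|\lesssim W_\mu^{8/(N-4)}+W_\lambda^{8/(N-4)}$, H\"older's inequality with exponents $(N/4,N/(N-4))$ together with the Sobolev embedding $\dot H^2\hookrightarrow L^{2N/(N-4)}$ yields
\[
\Big|\mathcal{R}\!\int_{\R^N}\bar g\, f^\prime(e^{i\zeta}W_\mu+e^{i\theta}W_\lambda)g\,dx\Big|\lesssim \|g\|_{\mathcal{E}}^2,
\]
and the $(a_j^\pm)^2$ terms are bounded by $\|g\|_{\mathcal{E}}^2$ because $\|\alpha^\pm_{\theta,\lambda}\|_{L^{2N/(N+4)}}\lesssim 1$ (and likewise for $(\zeta,\mu)$). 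Hence the right-hand inequality follows from $\int|\Delta g|^2=\|g\|_{\mathcal{E}}^2$.

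For the coercivity (lower) bound the plan is to localize at a scale intermediate between the two bubbles. Choose $r=\lambda^{1/2}$ (so that $\lambda\ll r\ll\mu\approx 1$ under \eqref{eta} provided $\eta$ is small enough) and a smooth radial cutoff $\chi$ with $\chi\equiv 1$ on $\{|x|\le r\}$, $\chi\equiv 0$ on $\{|x|\ge 2r\}$, $\|\nabla^{(i)}\chi\|_\infty\lesssim r^{-i}$. Write $g=\chi g+(1-\chi)g$. On the inner region $W_\mu\lesssim 1\lesssim W_\lambda$, so $|f^\prime(e^{i\zeta}W_\mu+e^{i\theta}W_\lambda)-f^\prime(e^{i\theta}W_\lambda)|\lesssim W_\lambda^{(12-N)/(N-4)}W_\mu$; integrating against $|g|^2$ and using Sobolev produces an error $o(1)\|g\|_{\mathcal{E}}^2$ as $\lambda\to 0$. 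Symmetrically, on the outer region $W_\lambda\lesssim W_\mu$, and $|f^\prime(e^{i\zeta}W_\mu+e^{i\theta}W_\lambda)-f^\prime(e^{i\zeta}W_\mu)|\lesssim W_\mu^{8/(N-4)}\cdot(W_\lambda/W_\mu)$ is similarly small in the relevant norm.

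Now apply the third (small-$r_2$) variant of Proposition \ref{propcoer111} to the bubble $e^{i\theta}W_\lambda$ with cutoff at $r_2=\lambda^{1/2}$ (which, after rescaling by $\lambda$, becomes a large radius $r_2/\lambda=\lambda^{-1/2}\to\infty$), and the second (large-$r_1$) variant to the bubble $e^{i\zeta}W_\mu$ with cutoff at $r_1=\lambda^{1/2}$ (which, after rescaling by $\mu\approx 1$, is $O(\lambda^{1/2})$). Adding the two inequalities, the crossed cutoff terms combine to give a full $\int_{\R^N}|\Delta g|^2$ up to a factor $1-c$, and the Hessian is reproduced up to the $o(1)\|g\|_{\mathcal{E}}^2$ cross-bubble errors just estimated. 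The unwanted negative directions produced by Proposition \ref{propcoer111} are exactly
\[
\langle \lambda^{-2}e^{i\theta}W_\lambda,g\rangle,\ \langle \lambda^{-2}ie^{i\theta}\Lambda W_\lambda,g\rangle,\ \langle \mu^{-2}e^{i\zeta}W_\mu,g\rangle,\ \langle \mu^{-2}ie^{i\zeta}\Lambda W_\mu,g\rangle,
\]
which vanish by the orthogonality conditions \eqref{ortg}, together with $\langle\alpha^\pm_{\theta,\lambda},g\rangle=a_2^\pm$ and $\langle\alpha^\pm_{\zeta,\mu},g\rangle=a_1^\pm$, which are precisely the terms subtracted on the right-hand side. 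Absorbing the $o(1)\|g\|_{\mathcal{E}}^2$ errors into the coercivity constant for $\eta$ small enough gives
\[
\tfrac{1}{2}\langle D^2E(e^{i\zeta}W_\mu+e^{i\theta}W_\lambda)g,g\rangle+2\sum_{j,\pm}(a_j^\pm)^2\ge \tfrac{1}{C}\|g\|_{\mathcal{E}}^2.
\]

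The main technical obstacle is the control of the crossed potential term: one has to show that replacing $f^\prime(e^{i\zeta}W_\mu+e^{i\theta}W_\lambda)$ by $f^\prime(e^{i\theta}W_\lambda)$ on the inner region and by $f^\prime(e^{i\zeta}W_\mu)$ on the outer region produces only an $o(1)\|g\|_{\mathcal{E}}^2$ error as $\eta\to 0$ (equivalently as $\lambda/\mu\to 0$). This boils down to integrability estimates of the type $\|W_\lambda^{8/(N-4)}\|_{L^{N/4}(|x|\ge\sqrt{\lambda})}\to 0$, which hold precisely because $N\ge 13$ ensures enough decay at infinity for the small bubble and at the origin for the large one.
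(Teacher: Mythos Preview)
Your proof is essentially correct and follows the same strategy as the paper: one shows that the cross-bubble potential $f'(e^{i\zeta}W_\mu+e^{i\theta}W_\lambda)-f'(e^{i\zeta}W_\mu)-f'(e^{i\theta}W_\lambda)$ is small in $L^{N/4}$, then combines the two localized coercivity statements of Proposition~\ref{propcoer111} at the intermediate scale $\sqrt{\lambda}$ and invokes the orthogonality conditions \eqref{ortg}. One correction: you have the two variants of Proposition~\ref{propcoer111} swapped --- since the cutoff $\sqrt{\lambda}$ is \emph{large} relative to the inner bubble scale $\lambda$, it is the large-$r_1$ (second) variant that applies to $e^{i\theta}W_\lambda$, and the small-$r_2$ (third) variant to $e^{i\zeta}W_\mu$; your own parenthetical remarks about the rescaled radii already point to this.
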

\begin{proof}
The proof of the upper bound is straight-forward since we deduce, from $\|\alpha_{\theta , \lambda}^\pm\|_{L^{\frac{2N}{N+4}}}\lesssim 1$, that $|a_i^\pm |\lesssim \|g\|_{\mathcal{E}}$. For the lower bound, we claim that, for any $c>0$, there holds
\begin{align*}
& | \langle D^2 E (e^{i\zeta}W_\mu +e^{i\theta} W_\lambda )g,g\rangle  -[ \int_{\R^N} |\Delta g|^2 -\mathcal{R}\int_{\R^N}\overline{g} g (f^\prime (e^{i\theta} W_\lambda) +f^\prime (e^{i\zeta} W_\mu ) ) dx] |  \\
&\lesssim c \|g\|^2_{\mathcal{E}}
\end{align*}
provided $\eta$ is small enough. The result then follows by using the previous lemma and the orthogonality conditions. Let us notice, using H\" older and Sobolev inequality that the claim reduces to show that
$$\|f^\prime (e^{i\theta} W_\lambda + e^{i\zeta} W_\mu )  - f^\prime (e^{i\theta} W_\lambda) - f^\prime (e^{i\zeta} W_\mu )  \|_{L^{\frac{N}{4}}(\R^N) }\leq c.  $$
We can assume without loss of generality that $\mu=1$. We decompose this integral into two parts depending whether $|x|\leq \sqrt{\lambda}$ or not. If $|x|\leq \sqrt{\lambda}$, then $W\lesssim W_\lambda$ and so
$$|f^\prime (e^{i\theta} W_\lambda + e^{i\zeta} W_\mu )  - f^\prime (e^{i\theta} W_\lambda) - f^\prime (e^{i\zeta} W_\mu )| \lesssim 1. $$
On the other hand, if $|x|\geq \sqrt{\lambda}$, we have
$$|f^\prime (e^{i\theta} W_\lambda + e^{i\zeta} W_\mu )  - f^\prime (e^{i\theta} W_\lambda) - f^\prime (e^{i\zeta} W_\mu )| \lesssim  |f^\prime (e^{i\theta} W_\lambda)|. $$
The claim then follows by a simple integration.
\end{proof}
We are finally in position to prove Proposition \ref{ener}.
\begin{proof}[Proof of Proposition \ref{ener}]
We begin by proving \eqref{propenere1}. By using successively Lemma \ref{lemener1}, \ref{lemener2}, \ref{lemener3} and \eqref{enere1}, we find
\begin{align*}
&|E(u)-2E(W)|\le \big|E(u) - E (e^{i\zeta}W_\mu +e^{i\theta} W_\lambda) + C_1 \theta \lambda^{\frac{N-4}{2}} \big|\\
& +C \lambda^{\frac{N-4}{2}} \big(|\zeta +\pi/2| +|\mu -1| +|\theta|^3 +\lambda\big)\\
&\lesssim  |E(u) - E (e^{i\zeta}W_\mu +e^{i\theta} W_\lambda) - \langle DE (e^{i\zeta}W_\mu +e^{i\theta} W_\lambda),g\rangle | \\
& +C \lambda^{\frac{N-4}{2}} \big(|\zeta +\pi/2| +|\mu -1| +|\theta| +\lambda\big)  +\lambda^{\frac{N+4}{4}}\|g\|_{\mathcal{E}} \\
&\lesssim  \big|E(u) - E (e^{i\zeta}W_\mu +e^{i\theta} W_\lambda) - \langle DE (e^{i\zeta}W_\mu +e^{i\theta} W_\lambda),g\rangle - \tfrac{1}{2}\langle D^2 E (e^{i\zeta}W_\mu +e^{i\theta} W_\lambda )g,g\rangle \big| \\
& +C \lambda^{\frac{N-4}{2}} \big(|\zeta +\pi/2| +|\mu -1| +|\theta| +\lambda\big)  +\lambda^{\frac{N+4}{4}}\|g\|_{\mathcal{E}} +\|g\|_{\mathcal{E}}^2  \\
&\lesssim C \lambda^{\frac{N-4}{2}} \big(|\zeta +\pi/2| +|\mu -1| +|\theta| +\lambda\big) +\|g\|_{\mathcal{E}}^2 +\|g\|_{\mathcal{E}}^{\frac{2N}{N-4}}  .
\end{align*}
Notice that, for any $c>0$, we have $\|g\|_{\mathcal{E}}^{\frac{2N}{N-4}}\leq c \|g\|_{\mathcal{E}}^2$ if $\eta$ is small enough.
Next, we turn to the proof of \eqref{propenere2}. From Lemma \ref{lemener1}, \ref{lemener2} and \eqref{enere1}, we have
\begin{align*}
&|E(u) - 2E(W) -\theta C_1  \lambda^{\frac{N-4}{2}} - \tfrac{1}{2} \langle D^2 E (e^{i\zeta} W_\mu +e^{i\theta} W_\lambda) g,g \rangle |\\
& \leq C (|\zeta + \pi /2|+|\mu -1| +|\theta|^3 +\lambda) \lambda^{(N-4)/2}+c \|g\|_{\mathcal{E}}^2
\end{align*}
so
\begin{align*}
&\theta C_1 \lambda^{\frac{N-4}{2}} +\tfrac{1}{2} \langle D^2 E (e^{i\zeta} W_\mu +e^{i\theta} W_\lambda) g,g \rangle \\
& \leq E(u) - 2E(W) +C (|\zeta + \pi /2|+|\mu -1| +|\theta|^3 +\lambda) \lambda^{(N-4)/2}+c \|g\|_{\mathcal{E}}^2    .
\end{align*}
Therefore, choosing $c>0$ small enough, the result follows from Lemma \ref{lemener3}.
\end{proof}


\section{Bootstrap procedure: proof of Theorem \ref{main}}
The aim of this section is basically to prove that the assumptions we used in Proposition \ref{lem3.1} indeed hold true. Once it is done, Theorem \ref{main} will follow by using a Brower fixed point argument. In a first time, we will choose a 'good' initial data at time $T$, namely we take $u(T)=-iW +W_{\lambda_0}+g_0$ where $\lambda_0 \approx |T|^{-\frac{2}{N-12}} $ and $g_0$ satisfies appropriate orthogonality conditions. 
\begin{lem}
\label{lemcondinit}
There exists $T_0<0$ such that for all $T\leq T_0$ and for all $\lambda^0$, $a_1^0$, $a_2^0$ satisfying
\begin{equation}
\label{condinit}
|\lambda^0 - \tilde{C} |T|^{-\frac{2}{N-12}}|\leq \frac{1}{2} |T|^{-\frac{5}{2(N-12)}},\ |a_1^0|\leq \frac{1}{2} |T|^{-\frac{N}{2(N-12)}},\ |a_2^0|\leq \frac{1}{2} |T|^{-\frac{N}{2(N-12)}},
\end{equation}
there exists $g^0 \in X^2 = \dot{H}^3 (\R)^N \cap \dot{H}^2 (\R^N)$ satisfying
\begin{equation}\label{4-2}
\langle \Lambda W , g^0\rangle = \langle iW , g^0\rangle = \langle i \Lambda W_{\lambda^0},g^0 \rangle = \langle - W_{\lambda^0}, g^0\rangle =0,
\end{equation}
\begin{equation}\label{4-3}
\langle \alpha^-_{-\frac{\pi}{2},1} , g^0 \rangle =0,\ \langle \alpha^+_{-\frac{\pi}{2},1}, g^0 \rangle = a_1^0 ,\ \langle \alpha_{0,\lambda^0}^- , g^0 \rangle =0 ,\ \langle \alpha_{0,\lambda^0}^+,g^0 \rangle = a_2^0,
\end{equation}
\begin{equation}
\label{eqg0}
\|g^0\|_{\mathcal{E}} \lesssim |T|^{-\frac{N}{2(N-12)}} .
\end{equation}
Moreover, $g^0$ is continuous in the $X^2$ topology with respect to the parameters $\lambda^0$, $a_1^0$ and $a_2^0$.
\end{lem}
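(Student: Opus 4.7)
The plan is to seek $g^0$ in the eight-dimensional real subspace
\[
V := \Span\bigl\{\Lambda W,\, iW,\, i\Lambda W_{\lambda^0},\, W_{\lambda^0},\, \alpha^-_{-\pi/2,1},\, \alpha^+_{-\pi/2,1},\, \alpha^-_{0,\lambda^0},\, \alpha^+_{0,\lambda^0}\bigr\}\subset X^2,
\]
and reduce the construction to solving an $8\times 8$ linear system. Each of the eight functions indeed lies in $X^2=\dot H^3(\R^N)\cap\dot H^2(\R^N)$ when $N\ge 13$: the $\alpha^\pm$ are Schwartz (since the $Y^{(i)}$ are), while $W$ and $\Lambda W$ (and their $\lambda^0$-rescalings) have $\mathcal{O}(|x|^{-(N-4)})$ decay, which places them in every $\dot H^s(\R^N)$ with $s\le N/2$. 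Writing $g^0=\sum_{j=1}^{8}c_j\phi_j$ and imposing \eqref{4-2}--\eqref{4-3} yields a linear system $M(\lambda^0)\vec c=\vec b(a_1^0,a_2^0)$, where $M_{ij}(\lambda^0)$ is the real inner product of the $i$-th test direction with the $j$-th basis element, and $|\vec b|\le|a_1^0|+|a_2^0|$.

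The heart of the argument is to show that $M(\lambda^0)$ is invertible with $\|M(\lambda^0)^{-1}\|=O(1)$ as $\lambda^0\downarrow 0$. I would split $M=M_0+R(\lambda^0)$ according to the partition of indices into a ``large-scale'' block $\{\Lambda W,iW,\alpha^\pm_{-\pi/2,1}\}$ and a ``small-scale'' block $\{i\Lambda W_{\lambda^0},W_{\lambda^0},\alpha^\pm_{0,\lambda^0}\}$, with $M_0$ being the block-diagonal part. Each of the two $4\times 4$ diagonal blocks of $M_0$ is nondegenerate: the identities established in the preliminaries, namely $\alpha^\pm_{\theta,\lambda}\perp ie^{i\theta}W_\lambda$ and $\alpha^\pm_{\theta,\lambda}\perp e^{i\theta}\Lambda W_\lambda$, together with the trivial vanishing $\langle\Lambda W,iW\rangle=0$ (the integral is purely imaginary) and the positivity $\langle Y^{(1)},Y^{(2)}\rangle>0$, produce block determinants that are explicit polynomials in $\|W\|_{L^2}^2$, $\|\Lambda W\|_{L^2}^2$, and $\langle Y^{(1)},Y^{(2)}\rangle$, all strictly positive; for the small-scale block the same computation applies by the scale invariance of these pairings. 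The cross-scale perturbation $R(\lambda^0)$ vanishes polynomially as $\lambda^0\to 0$ by direct change of variables (for instance $\langle\Lambda W,W_{\lambda^0}\rangle=O((\lambda^0)^{(N-4)/2})$), and the Schwartz decay of the $Y^{(i)}$ yields analogous rates for cross-terms involving $\alpha^\pm$. Hence for $|T|$ large, $M(\lambda^0)$ is a small perturbation of an invertible block-diagonal matrix.

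Given invertibility, $\vec c=M(\lambda^0)^{-1}\vec b$ satisfies $|\vec c|\lesssim|a_1^0|+|a_2^0|$ and, using $\|\phi_j\|_{\mathcal{E}}\lesssim 1$ (scale-invariance of $\|W_\lambda\|_{\mathcal{E}}$ and the Schwartz character of the $\alpha^\pm$), we obtain the claimed estimate $\|g^0\|_{\mathcal{E}}\lesssim|T|^{-N/(2(N-12))}$. Continuity of $g^0$ in $(\lambda^0,a_1^0,a_2^0)$ in the $X^2$ topology then follows from norm-continuity of the maps $\lambda\mapsto W_\lambda,\Lambda W_\lambda,\alpha^\pm_{0,\lambda}$ into $X^2$, continuity of matrix inversion, and the linear dependence of $\vec b$ on $(a_1^0,a_2^0)$. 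The main obstacle I anticipate is the bookkeeping verification that the two $4\times 4$ diagonal blocks of $M_0$ are indeed invertible: the orthogonalities from the preliminaries kill only half of the entries, leaving nontrivial pairings such as $\langle\Lambda W,\alpha^\pm_{-\pi/2,1}\rangle$, and the strict positivity $\langle Y^{(1)},Y^{(2)}\rangle>0$ proved at the end of the preliminary section is the crucial input that prevents these blocks' determinants from vanishing.
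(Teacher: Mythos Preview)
Your overall strategy---search for $g^0$ in an eight-dimensional subspace and solve a linear system---is the same as the paper's, but there is a genuine gap in your scaling analysis that invalidates the norm bound as written.

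The claim $\|\phi_j\|_{\mathcal{E}}\lesssim 1$ is false for $\phi_j=\alpha^\pm_{0,\lambda^0}$: by definition $\alpha^\pm_{0,\lambda^0}=(\lambda^0)^{-4}\bigl(Y^{(2)}_{\lambda^0}\pm iY^{(1)}_{\lambda^0}\bigr)$, and since $\|Y^{(j)}_\lambda\|_{\dot H^2}=\|Y^{(j)}\|_{\dot H^2}$ one has $\|\alpha^\pm_{0,\lambda^0}\|_{\mathcal{E}}\sim(\lambda^0)^{-4}\to\infty$. For the same reason the small-scale pairings are \emph{not} scale invariant: $\langle\alpha^+_{0,\lambda^0},\alpha^+_{0,\lambda^0}\rangle\sim(\lambda^0)^{-4}$ while $\langle i\Lambda W_{\lambda^0},i\Lambda W_{\lambda^0}\rangle\sim(\lambda^0)^{4}$, so your small-scale $4\times4$ block has entries ranging over three different orders of magnitude, and the assertion ``the same computation applies by scale invariance'' breaks down. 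Consequently neither $\|M(\lambda^0)^{-1}\|=O(1)$ nor $|\vec c|\lesssim|\vec b|$ holds in the operator-norm sense you invoke, and the final step $\|g^0\|_{\mathcal{E}}\le\sum_j|c_j|\,\|\phi_j\|_{\mathcal{E}}\lesssim|\vec b|$ is unjustified.

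The paper repairs this by a careful choice of basis and test directions that absorbs the bad powers of $\lambda^0$: it expands $g^0$ along $i\alpha^\mp_{-\pi/2,1}$, $W$, $-i\Lambda W$, $(\lambda^0)^4 i\alpha^\mp_{0,\lambda^0}$, $iW_{\lambda^0}$, $\Lambda W_{\lambda^0}$ and pairs against $\alpha^\pm_{-\pi/2,1}$, $\Lambda W$, $iW$, $\alpha^\pm_{0,\lambda^0}$, $(\lambda^0)^{-4}i\Lambda W_{\lambda^0}$, $-(\lambda^0)^{-4}W_{\lambda^0}$. With these rescalings every diagonal entry is bounded away from zero uniformly in $\lambda^0$ and every off-diagonal entry is $O\bigl((\lambda^0)^{(N-11)/2}\bigr)$, so the matrix is strictly diagonally dominant for $|T_0|$ large. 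Then $|\vec c|\lesssim|a_1^0|+|a_2^0|$ genuinely holds, and since each rescaled basis element now has $\mathcal{E}$-norm $O(1)$ (in particular $\|(\lambda^0)^4\alpha^\pm_{0,\lambda^0}\|_{\mathcal{E}}\lesssim 1$), the estimate \eqref{eqg0} follows. Your argument can be salvaged by inserting exactly these $(\lambda^0)^{\pm4}$ weights.
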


\begin{proof}
Let $g^0$ be of the following form
\begin{align*}
g^0 = & a_1^+ i \alpha^-_{-\frac{\pi}{2},1} - a_1^- i \alpha^+_{-\frac{\pi}{2}, 1}+b_1 W +c_1 (-i \Lambda W)\\
& +a_2^+ (\lambda^0)^4 i \alpha_{0,\lambda^0}^- -a_2^- (\lambda^0)^4 i \alpha_{0,\lambda^0}^+ + b_2 i W_{\lambda^0} +c_2 \Lambda W_{\lambda^0}, 
\end{align*}
with $a_1^+ ,a_1^- , b_1 ,c_1 ,a_2^+ ,a_2^- , b_2 ,c_2$ being real numbers. Let $\Phi : \R^8 \rightarrow \R^8$ be the linear map defined as
\begin{align*}
&\Phi (a_1^+ , a_1^- , b_1 ,c_1 ,a_2^+ ,a_2^- , b_2 ,c_2) = \big(\langle \alpha^+_{-\frac{\pi}{2},1}, g^0 \rangle , \langle \alpha^-_{-\frac{\pi}{2},1}, g^0 \rangle , \langle \Lambda W , g^0 \rangle , \langle iW ,g^0 \rangle, \\
&\qquad \langle \alpha^+_{0,\lambda^0} , g^0 \rangle , \langle \alpha_{0,\lambda^0}^- , g^0 \rangle , \langle (\lambda^0)^{-4} i \Lambda W_{\lambda^0}, g^0 \rangle , \langle - (\lambda^0)^{-4} W_{\lambda^0} , g^0 \rangle \big).
\end{align*}
Hence we need to solve
\[
\Phi (a_1^+ , a_1^- , b_1 ,c_1 ,a_2^+ ,a_2^- , b_2 ,c_2) = (a_1^0 ,0 , 0,0,a_2^0 ,0 ,0,0).
\]
First we are going to prove that $\Phi$ is invertible.
Denoting by $(M_{ij})$, $i,j=1,\ldots ,8$, the matrix of $\Phi$, we see by direct computations that the diagonal entries satisfy
\[
|M_{ii}|\ge c>0
\]
for some constant $c$ independent of $\lambda^0$ whereas for the other entries we have
\[
|M_{ij}| \lesssim |\lambda^0|^{\frac{N-11}{2}},\quad i\ne j.
\]
Choosing $|T_0|$ large enough, $(M_{ij})$ will be strictly diagonally dominant, and therefore $\Phi$ is invertible. We also notice that $a_i^- \approx b_i \approx c_i \approx 0 $, $i=1,2$, $a_1^+ \approx a_1^0$ and $a_2^+ \approx a_2^0$. Since $\|\lambda^4 \alpha_{\theta , \lambda}\|_{\dot{H}^2} \lesssim 1$, we deduce that $\|g\|_{\mathcal{E}} \lesssim \max \{|a_1^+|, |a_2^+|\} \lesssim |T|^{-\frac{N}{2(N-12)}}$.

\end{proof}


In order to control $\theta^\prime$, we will use a localized virial functional as in \cite{JJ} (see also \cite{raphael2011existence}). The next technical lemma gives an explicite expression for the localizing function $q$. We require that this function satisfies some suitable conditions which will allow us to get \eqref{properA3}.

\begin{lem}
\label{lemconsq}
For any $c>0$ and $R>0$, there exists a radial function $q=q(c,R) \in C^{5,1} (\R^N)$ satisfying the following properties:
\begin{itemize}
\item[(1)] $q(x)= \frac{1}{2} |x|^2$ for $|x|\leq R$.
\item[(2)] There exists $\tilde{R}>0$ (depending on $c$ and $R$) such that $q(x) \equiv \text{Const}$ for $|x|\geq \tilde{R}$.
\item[(3)] $|\nabla q (x)|\lesssim |x|$ and $|\Delta q(x) | \lesssim 1 $ for all $x\in \R^N$, with constants independent of $c$ and $R$.
\item[(4)] $\partial_{rr}^2 q(x) \geq - c$ for all $x\in \R^N$. 
\item[(5)] $\big(2\partial_{rr}^2(\Delta q)+\Delta^2 q\big)(x) \leq c |x|^{-2}$ and $-\Delta^3 q (x) \leq c |x|^{-4} $  for all $x\in \R^N$.
\end{itemize}
\end{lem}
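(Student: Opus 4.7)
The strategy is to construct $q$ as a radial function $q(x)=Q(|x|)$ by prescribing its second derivative. I would set $Q(r)=r^2/2$ on $[0,R]$, $Q$ constant on $[\tilde R,\infty)$, and on $[R,\tilde R]$ prescribe a smooth dip-and-recover profile for $Q''$: starting at the value $1$ at $r=R$, going down to a plateau close to $-c$, and coming back up to $0$ at $r=\tilde R$. The integral identity $\int_R^{\tilde R}Q''(r)\,dr=Q'(\tilde R)-Q'(R)=-R$ combined with the pointwise lower bound $Q''\ge -c$ forces $\tilde R-R\gtrsim R/c$, so I take $L:=\tilde R-R$ of order $R/c$.

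Concretely, fix once and for all a smooth scalar profile $\Phi\in C^\infty(\R;[-1,1])$ with $\Phi\equiv 1$ on $(-\infty,0]$, $\Phi\equiv 0$ on $[1,\infty)$ and all derivatives of order up to $5$ vanishing at $0$ and $1$; then put $Q''(r)=\Phi((r-R)/L)$ on $[R,\tilde R]$, where $L$ and a mild dilation of $\Phi$ are chosen so that the volume condition $\int_R^{\tilde R}Q''=-R$ holds while the pointwise bound $Q''\ge -c$ is preserved. Integrating twice from $Q(0)=Q'(0)=0$ gives a radial $q\in C^{5,1}(\R^N)$. Property (1) holds on $[0,R]$ by construction, property (2) on $[\tilde R,\infty)$ since $Q'(\tilde R)=0$, property (3) follows from $\|Q''\|_\infty\le 1$ (giving $|Q'(r)|\le r$ and $|\Delta q|\lesssim 1$), and property (4) is built in.

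The bulk of the work is property (5). I would use the radial identity
\begin{equation*}
2\partial_{rr}^2\Delta q+\Delta^2 q=3Q^{(4)}+\frac{4(N-1)}{r}Q'''+\frac{(N-1)(N-7)}{r^3}\bigl(rQ''-Q'\bigr),
\end{equation*}
together with an analogous expansion for $\Delta^3 q$ whose lowest-order remainder is proportional to $3(N-1)(N-3)(N-5)(rQ''-Q')/r^5$. On $[0,R]$ all terms vanish because $Q=r^2/2$ satisfies $rQ''-Q'\equiv 0$ and $Q^{(k)}\equiv 0$ for $k\ge 3$; on $[\tilde R,\infty)$ all terms vanish because $Q$ is constant. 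On the transition interval, the scaling $L\sim R/c$ yields $|Q^{(k)}|\lesssim L^{-(k-2)}\sim (c/R)^{k-2}$ for $k=3,4,5,6$, so $|3Q^{(4)}|+|4(N-1)Q'''/r|\lesssim c^2/R^2+c/R^2\lesssim c/r^2$, and similarly for the higher-derivative terms in $\Delta^3 q$. The remaining pieces $(N-1)(N-7)(rQ''-Q')/r^3$ and its fifth-power analogue are handled via the identity $(rQ''-Q')'=rQ'''$: because the profile $\Phi$ has $Q'''$ negative on the left half of the transition and positive on the right, with $\int_R^{\tilde R}sQ'''(s)\,ds=0$ forced by $rQ''-Q'=0$ at both endpoints, the quantity $rQ''-Q'$ stays nonpositive throughout. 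For $N\ge 13$ we have both $N-7>0$ and $N-5>0$, so both combinations are nonpositive in the transition zone and the required upper bounds follow trivially.

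The main obstacle is bookkeeping the constants in the transition zone: the estimates $|3Q^{(4)}|\le C_\Phi c^2/R^2$ and $|4(N-1)Q'''/r|\le C_{\Phi,N}\,c/R^2$ come with constants depending on $\Phi$ and $N$ that must be absorbed below the target prefactor $c$ of $c/r^2$. This is handled by performing the whole construction with $c$ replaced by $c':=c/K$ for $K$ large depending only on $\Phi$ and $N$, and then relabeling; property (4) is unaffected since $Q''\ge -c'\ge -c$. The $C^{5,1}$ regularity at the junctions $r=R$ and $r=\tilde R$ is ensured by the vanishing of $\Phi^{(k)}$ at $0$ and $1$ for $k=1,\dots,5$.
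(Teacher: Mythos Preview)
Your approach is genuinely different from the paper's, and the outline contains a real gap in the treatment of property~(5).

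The paper does \emph{not} build $Q''$ as a single dip-and-recover profile. Instead it first writes down an explicit power-law tail
\[
q_0(r)=c_0 r^{2-\varepsilon}+c_1 r+c_2+c_3 r^{2-N}+c_4 r^{4-N}+c_5 r^{6-N}\qquad(r\ge R),
\]
with coefficients chosen so that $q_0\in C^{5,1}$ at $r=R$. For this function $q_0''\ge 0$ everywhere (so~(4) holds trivially) and, crucially, $q_0^{(j)}(r)=O(\varepsilon\, r^{2-j})$ for $j\ge 3$, which gives~(5) with constant $O(\varepsilon)$. Only \emph{after} $q_0''$ has been driven down to the small value $R_0^{-\varepsilon}$ does the paper cut off, over a window of length $\sim R_0$, so the cut-off only has to absorb small quantities. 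The two stages---slow power-law decay of $q''$, then truncation once $q''$ is already small---are the whole point.

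Your construction skips the first stage and tries to go directly from $Q''=1$ to $Q''\equiv 0$ via a profile with a plateau near $-c$. The derivative bound you invoke, $|Q^{(k)}|\lesssim L^{-(k-2)}\sim(c/R)^{k-2}$, is not compatible with your constraints. Since $Q''(R)=1$ but $\int_R^{\tilde R}Q''=-R$ and $Q''\ge -c$, the function $Q''$ must be close to $-c$ on a subinterval of length $\gtrsim R/c$; the initial drop from $1$ to values near $-c$ is therefore confined to a sub-interval of length $\ell_1\ll L$. On that drop $|Q'''|\sim 1/\ell_1$ and $|Q^{(4)}|\sim 1/\ell_1^2$, not $(c/R)$ and $(c/R)^2$. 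Moreover, even granting your estimates, the step ``$c^2/R^2+c/R^2\lesssim c/r^2$'' is false for $r>R$, and the transition zone extends all the way to $\tilde R\sim R/c$. Your sign observation that $rQ''-Q'\le 0$ is correct and does help, but it does not by itself control the combination $3Q^{(4)}+\tfrac{4(N-1)}{r}Q'''$, which becomes positive near the end of the descent where $Q'''\to 0$ while $Q^{(4)}>0$; showing the third term compensates would require a pointwise argument you have not supplied. The same difficulty recurs (one order worse) for $-\Delta^3 q$.

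In short: the low-order ``geometric'' term is handled correctly, but the asserted smallness of the higher derivatives fails on the initial $O(1)$ drop of $Q''$, and this is exactly what the paper's power-law intermediary is designed to avoid.
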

\begin{proof}
It is enough to assume that $R=1$ since the function $q_R:=R^2 q(\cdot/R)$ satisfies the properties if and only if $q$ does.
First, for $0<\varepsilon\ll 1$, we define a function $q_0=q_{0,\varepsilon}\colon [0,\infty)\to\R$ by setting
\[
q_0(s)=
\begin{cases}
\frac{1}{2}s^2, &\text{ if }s\le 1\\
c_0r^{2-\varepsilon}+c_1s + c_2+c_3s^{2-N} +c_4s^{4-N}+c_5s^{6-N}, &\text{ if }s\ge 1,
\end{cases}
\]
where
\begin{align*}
c_0&=\frac{N(N-2)(N-4)}{(\varepsilon-1)(\varepsilon-2)(N-\varepsilon)(N-2-\varepsilon)(N-4-\varepsilon)},\\
c_1&=\frac{\varepsilon N(N-2)(N-4)}{(\varepsilon-1)(N-1)(N-3)(N-5)},\\
c_2&=-\frac{\varepsilon N}{2(\varepsilon-2)(N-6)},\\
c_3&=-\frac{\varepsilon (N-4)}{8(N-\varepsilon)(N-1)},\\
c_4&=\frac{\varepsilon N}{4(N-2-\varepsilon)(N-3)},\\
\noalign{and}
c_5&=\frac{\varepsilon N(N-2)}{8(N-4-\varepsilon)(N-5)(N-6)}.
\end{align*}
We notice that \[
\sum_{i=0}^5 c_i=1/2,
\]  
\[
(c_0,c_1,c_2,c_3,c_4,c_5)\to (1/2,0,0,0,0,0)
\] 
as $\varepsilon\to 0+$, 
and that
\[
\lim_{s\to 1}\big(q_0(s),q^\prime_0(s),q^{\prime\prime}_0(s),q^{\prime\prime\prime}_0(s),q^{(4)}_0(s),q^{(5)}_0(s)\big)=(1/2,1,1,0,0,0).
\]
Furthermore, for $s>1$ and small enough $\varepsilon>0$, we have
\begin{align*}
q^\prime_0(s)&=(2-\varepsilon)c_0 s^{1-\varepsilon}+O(\varepsilon),\\
q^{\prime\prime}_0(s)&=(2-\varepsilon)(1-\varepsilon)c_0s^{-\varepsilon}+O\big(\varepsilon s^{4-N}\big)\ge 0,\\
\noalign{and}
q^{(j)}_0(s)&=O\big(\varepsilon s^{-(j-2)-\varepsilon}\big)\quad \text{ for }j\in\{3,4,5,6\}.
\end{align*}

Letting $r(x)=|x|$ denote the radial coordinate we notice that
$q_0:=q_0\circ r\in C^{5,1}(\R^N)$.
Furthermore, $q_0$ is convex since $q^{\prime\prime}_0\ge 0$, and for $|x|>1$
\begin{align*}
\Delta q_0(x)&=(2-\varepsilon)(N-\varepsilon)c_0r^{-\varepsilon} +O(\varepsilon r^{-1}),\\
\Delta^2 q_0(x)&= O\big(\varepsilon r^{-2-\varepsilon}\big),\\
\Delta^3 q_0(x)&=O\big(\varepsilon r^{-4-\varepsilon}\big),
\end{align*}
and
\[
\big(2\partial_{rr}^2(\Delta q_0)+\Delta^2 q_0\big)(x)=O\big(\varepsilon r^{-2-\varepsilon}\big).
\]

Let $\chi\in C^\infty_0\big([0,\infty)\big)$ be the standard cut-off function such that $\chi(r)=1$ for $0\le r\le 1$ and  $\chi(r)=0$ for $r\ge 2$. Let $R_0=R_0(c,\varepsilon)\gg 1$. Define $e_j(r)=(1/j!)r^j\,\chi(r)$ for $j\in\{1,2,3,4,5\}$ and
\[
q(r)=\begin{cases}
q_0(r), &\text{ if }r\le R_0;\\
q_0(R_0)+\sum_{j=1}^5 q_0^{(j)}(R_0)R_0^j e_j(-1+R_0^{-1}r), &\text{ if }r\ge R_0.
\end{cases}
\]
Note that $q_0^\prime(R_0)\approx R_0^{1-\varepsilon}$,  $q_0^{\prime\prime} (R_0)\approx R_0^{-\varepsilon}$, and 
$q_0^{(j)}(R_0)\approx \varepsilon R_0^{-j+2}$ for $j\in\{3,4,5\}$.
Clearly $q\in C^{5,1} (\R^N)$ and $q(r)=q_0(R_0)$ for $r\ge 3R_0$ by the definition of the functions $e_j$. Hence properties (1) and (2) hold. Furthermore, it follows from the definition of $q$ that, for $R_0\le r\le 3R_0$, $|q^\prime (r)|\lesssim R_0^{1-\varepsilon}\lesssim r$, $|q^{\prime\prime}(r)|\lesssim R_0^{-\varepsilon}\lesssim 1$, and $|q^{(j)}(r)|\lesssim R_0^{-(j-2)-\varepsilon}$ for $j\in\{3,4,5\}$. Hence property (3) holds. Similarly, property (4) holds if $R_0\gg 1$ is large enough. Finally, property (5) follows by choosing $\varepsilon>0$ small enough depending on $c$ and $R$. 

\end{proof}

To define our localized virial, we define
$$[A(\lambda) h](x)= \frac{N-4}{2N\lambda^4}\Delta q \big(\frac{x}{\lambda}\big)h(x)+ \frac{1}{\lambda^3}\nabla q \big(\frac{x}{\lambda}\big)\cdot \nabla h(x),$$
and
$$[A_0 (\lambda)h](x)= \frac{1}{2\lambda^4}\Delta q \big(\frac{x}{\lambda}\big) h(x)+ \frac{1}{\lambda^3} \nabla q \big(\frac{x}{\lambda}\big)\cdot \nabla h (x).$$
We will write $A$ and $A_0$ instead of $A(1)$ and $A_0 (1)$. For all $h\in \mathcal{E}$, notice that
\begin{equation}\label{scaling}
A(\lambda)(h_\lambda)= \lambda^{-4} (Ah)_\lambda,\ A_0 (\lambda) (h_\lambda)= \lambda^{-4} (A_0 h)_\lambda .
\end{equation}

\begin{lem}
\label{lemproperA}
\begin{itemize}
\item[(i)] For $\lambda>0$, the families $\{A(\lambda )\}, \{A_0 (\lambda) \}, \{\lambda \partial_\lambda A(\lambda)\}$, and $\{\lambda \partial_\lambda A_0 (\lambda)\}$ are bounded in $\mathcal{L} (\mathcal{E} ; \dot{H}^{-2})$ and the families $\{\lambda A(\lambda)\}$ and  $\{\lambda A_0 (\lambda)\}$ are bounded in $\mathcal{L} (\mathcal{E} ; L^2)$.
\item[(ii)] For any complex-valued function $h_1 ,h_2 \in X^2 (\R^N)$ and $\lambda>0$, we have
\begin{equation}
\label{properA1}
\langle A(\lambda)h_1 , f(h_1 +h_2) - f(h_1) - f^\prime (h_1) h_2\rangle = - \langle A(\lambda)h_2 , f(h_1 +h_2) - f(h_1) \rangle ,
\end{equation}
\begin{equation}
\label{properA2}
\langle h_1 ,A_0 (\lambda) h_2 \rangle =- \langle A_0 (\lambda ) h_1 ,h_2 \rangle .
\end{equation}
\item[(iii)] For any $c_0>0$, if we choose $c$ in Lemma \ref{lemconsq} small enough, then for all $h\in X^2$,
\begin{equation}
\label{properA3}
\langle A_0 (\lambda )h, -\Delta^2 h \rangle \leq \dfrac{c_0}{\lambda^4 } \|h\|^2_{\mathcal{E}} - \dfrac{1}{\lambda^4} \int_{|x|\leq R \lambda} |\Delta h(x)|^2 dx .
\end{equation}
\end{itemize}
\end{lem}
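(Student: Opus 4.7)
\emph{Part (i).} Write $A(\lambda)h = \tfrac{N-4}{2N\lambda^4}(\Delta q)(x/\lambda)h + \tfrac{1}{\lambda^3}(\nabla q)(x/\lambda)\cdot\nabla h$ and note that by properties (2) and (3), $|\nabla q(y)|\lesssim |y|$ and $|\Delta q(y)|\lesssim 1$, both supported in $\{|y|\leq\tilde R\}$. Paired against a test function $\phi\in\mathcal{E}$ via duality, each contribution is controlled by H\"older's inequality combined with the Hardy inequalities $\||x|^{-2}u\|_{L^2}+\||x|^{-1}\nabla u\|_{L^2}\lesssim \|u\|_{\mathcal{E}}$ and Sobolev $\dot H^2\hookrightarrow L^{2N/(N-4)}$; the characteristic function of $\{|x|\leq\tilde R\lambda\}$ absorbs the powers of $\lambda$. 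The $\lambda$-derivatives $\lambda\partial_\lambda A(\lambda)$ and $\lambda\partial_\lambda A_0(\lambda)$ produce coefficients involving $(D^2 q)(x/\lambda)$ with the same structural bounds, so the same estimates apply.

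\emph{Part (ii).} The skew-symmetry \eqref{properA2} is immediate from integration by parts: setting $\psi(x):=\lambda^{-2}q(x/\lambda)$, one has $A_0(\lambda)h=\nabla\psi\cdot\nabla h+\tfrac12\Delta\psi\,h$, and
\[
\langle A_0(\lambda)h_1,h_2\rangle+\langle h_1,A_0(\lambda)h_2\rangle=\mathcal{R}\!\int \mathrm{div}(\nabla\psi\,\bar h_1 h_2)\,dx=0,
\]
since $\nabla\psi$ is compactly supported. The identity \eqref{properA1} hinges on the single algebraic fact
\[
\langle A(\lambda)u,f(u)\rangle=0\qquad\text{for every }u\in X^2.
\]
Indeed, using $\mathcal{R}(\bar u f(u))=|u|^{2N/(N-4)}=\tfrac{2N}{N-4}F(u)$ and $\mathcal{R}(\nabla\bar u\,f(u))=\nabla F(u)$, one computes
\[
\langle A(\lambda)u,f(u)\rangle=\tfrac{1}{\lambda^4}\!\int(\Delta q)(x/\lambda)F(u)\,dx+\tfrac{1}{\lambda^3}\!\int(\nabla q)(x/\lambda)\cdot\nabla F(u)\,dx,
\]
and the second integral equals $-\tfrac{1}{\lambda^4}\!\int(\Delta q)(x/\lambda)F(u)\,dx$ after one integration by parts (compact support of $\nabla q$), so the sum vanishes. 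Now applying this basic identity at $u=h_1+h_2$ and at $u=h_1$, subtracting, and using that the map $s\mapsto\langle A(\lambda)(h_1+sh_2),f(h_1+sh_2)\rangle$ vanishes identically---so its first derivative at $s=0$, namely $\langle A(\lambda)h_2,f(h_1)\rangle+\langle A(\lambda)h_1,f'(h_1)h_2\rangle$, also vanishes---produces exactly \eqref{properA1}.

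\emph{Part (iii).} This is the biharmonic Morawetz estimate and is the heart of the lemma. Four integrations by parts move all four Laplacians from $h$ onto the weight $\tilde q(x):=q(x/\lambda)$, producing an identity of the shape
\[
\langle A_0(\lambda)h,-\Delta^2 h\rangle=-\tfrac{1}{\lambda^4}\!\int \partial^2_{rr} q(x/\lambda)\,|\Delta h|^2\,dx+\mathcal I_1[h]+\mathcal I_2[h],
\]
where $\mathcal I_1[h]$ is a gradient quadratic form weighted by $(2\partial^2_{rr}(\Delta q)+\Delta^2 q)(x/\lambda)$ and $\mathcal I_2[h]$ is a $|h|^2$ term weighted by $-\Delta^3 q(x/\lambda)$. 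On $\{|x|\leq R\lambda\}$ property (1) gives $\partial^2_{rr}q=1$ and the two lower-order weights vanish, so the principal part is exactly $-\lambda^{-4}\!\int_{|x|\leq R\lambda}|\Delta h|^2\,dx$. On $\{|x|\geq R\lambda\}$, property (4) gives $\partial^2_{rr}q\geq -c$, so the leakage of the principal term is bounded by $c\lambda^{-4}\|h\|_{\mathcal E}^2$; property (5) together with the Hardy inequalities $\int|\nabla h|^2/|x|^2+\int|h|^2/|x|^4\lesssim\|h\|_{\mathcal E}^2$ controls $\mathcal I_1+\mathcal I_2$ by $c\lambda^{-4}\|h\|_{\mathcal E}^2$. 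Taking $c$ small enough relative to $c_0$ yields \eqref{properA3}. The principal obstacle is simply the bookkeeping of the biharmonic virial identity; one mimics the radial virial computation of Boulenger--Lenzmann and uses the explicit Hardy inequalities to absorb every lower-order weight.
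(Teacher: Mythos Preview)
Your proposal is correct and follows essentially the same approach as the paper. For (i) and (ii) the paper simply defers to \cite[Lemma~4.7]{JJ}, whereas you spell out the argument; your derivation of \eqref{properA1} from the single identity $\langle A(\lambda)u,f(u)\rangle=0$ via differentiation in $s$ is exactly the standard route. For (iii) both you and the paper invoke the Boulenger--Lenzmann biharmonic virial identity, then use properties (4)--(5) of $q$ together with Hardy to absorb the error terms.

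One minor imprecision: the actual identity that comes out of the Boulenger--Lenzmann computation (and that the paper records) has principal part
\[
-2\int\Big(\partial^2_{rr}q\,|\partial^2_{rr}h|^2+\frac{\partial_r q}{r}\,\frac{N-1}{r^2}\,|\partial_r h|^2\Big)\,dx,
\]
not $-\int\partial^2_{rr}q\,|\Delta h|^2\,dx$ as you wrote. The two expressions agree on $\{|x|\le R\lambda\}$ (where $\partial^2_{rr}q=\partial_r q/r=1$, and for radial $h$ one has $\int|\Delta h|^2=\int(|\partial^2_{rr}h|^2+\tfrac{N-1}{r^2}|\partial_r h|^2)$), so the conclusion \eqref{properA3} is unaffected; but controlling the leakage outside requires not only $\partial^2_{rr}q\ge -c$ (property~(4)) but also $\partial_r q\ge 0$, which holds by the explicit construction of $q$ though it is not listed among properties (1)--(5).
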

\begin{proof}
Claims (i) and (ii) follow, up to obvious modifications,  as in \cite[Lemma 4.7]{JJ}.

Let us establish \eqref{properA3}. Integrating by parts, see also Boulenger-Lenzmann
\cite[pp. 513-515]{bou-lenz} (notice that there is a misprint in the definition of $\Gamma_{\varphi_R}$ which should be $-i(2 \nabla \varphi_R .\nabla +\nabla. \nabla \varphi_R)$), we have, using that $q\in C^{5,1}(\R^N)$
\begin{align}\label{A3est}
\langle A_0 h ,-\Delta^2 h\rangle &= -2 \int_{\R^N}\left( \partial_{rr}^2 q |\partial_{rr}^2 h|^2 +\dfrac{\partial_r q}{r} \frac{N-1}{r^2}|\partial_r h|^2 \right) dx \\
&+ \int_{\R^N} |\nabla h |^2 \big(\partial^2_{rr} (\Delta q) +\frac{1}{2}\Delta^2 q\big) dx -\dfrac{1}{4} \int_{\R^N}  |h|^2 \Delta^3 q dx.\nonumber 
\end{align}
Then \eqref{properA3} follows from \eqref{A3est},  
Lemma \ref{lemconsq}, and Hardy's inequality. 

\end{proof}

We are now in position to prove the main bootstrap proposition. We will show that starting with the initial data given in Lemma \ref{condinit}, the parameters $\zeta$, $\mu$, $\theta$ and $g$ satisfy the assumption of Proposition \ref{lem3.1}. The main difficulty will be to control $\theta$.

\begin{prop}\label{4-4}
There exists $T_0<0$ with the following property. Let $T<T_1<T_0$ and let $\lambda^0$, $a_1^0$ and $a_2^0$ satisfy \eqref{condinit}. Let $g^0 \in X^2$ be given by Lemma \ref{lemcondinit} and consider the solution $u(t)$ to \eqref{eq} with initial data $u(T)=-iW+W_{\lambda_0}+g^0$. Suppose that $u(t)$ exists on $[T,T_1]$ and that \eqref{mode1}- \eqref{mode5} hold for any $t\in [T,T_1]$ as well as
\begin{equation}
\label{lastcondai}
|a_1^+ (t)| \leq |t|^{-\frac{N}{2(N-12)}},\ |a_2^+ (t)|\leq |t|^{-\frac{N}{2(N-12)}} .
\end{equation}
Then, for any $t\in [T,T_1]$, we have
\begin{equation}
\label{lastconde1}
\big|\zeta (t) +\frac{\pi}{2}\big| \leq \frac{1}{2} |t|^{-\frac{3}{N-12}},
\end{equation}
\begin{equation}
\label{lastconde2}
|\mu (t) -1|\leq \frac{1}{2} |t|^{-\frac{3}{N-12}},
\end{equation}
\begin{equation}
\label{lastconde3}
|\theta (t)|\leq \frac{1}{2} |t|^{-\frac{1}{N-12}},
\end{equation}
and
\begin{equation}
\label{lastconde4}
\|g(t)\|_{\mathcal{E}} \leq \frac{1}{2}|t|^{- \frac{N-3}{2(N-12)}}.
\end{equation}

\end{prop}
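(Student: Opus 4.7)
Here is my plan. By a standard continuity/bootstrap argument, assuming \eqref{mode1}--\eqref{mode5} and \eqref{lastcondai} hold on $[T,T_1]$, I will establish the strictly stronger bounds \eqref{lastconde1}--\eqref{lastconde4}; the bounds on $a_i^-$ required along the way follow from Lemma \ref{lemmodeai} by exponential damping of the stable modes. The easy part concerns $\zeta$ and $\mu$: by the construction in Lemma \ref{lemcondinit} one has $\zeta(T)=-\pi/2$ and $\mu(T)=1$ exactly, and since $N\geq 13$ gives $(N-5)/(N-12)>1$, the bounds \eqref{mode6}--\eqref{mode7} are integrable in time, so direct integration from $T$ yields
\[
|\zeta(t)+\pi/2|+|\mu(t)-1|\lesssim c\,|t|^{-(N-5)/(N-12)+1}= c\,|t|^{-7/(N-12)}\ll \tfrac12|t|^{-3/(N-12)}
\]
once $c$ (equivalently $|T_0|$) is small enough, establishing \eqref{lastconde1}--\eqref{lastconde2}.

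The control of $\theta$ is the heart of the argument. The modulation estimate \eqref{mode9} contains the forcing $K/(2\lambda^4\|W\|_{L^2}^2)$ of size $\|g\|_{\mathcal{E}}^2/\lambda^4\lesssim|t|^{-(N-11)/(N-12)}$: although this is integrable in time, its primitive is of order $|t|^{-1/(N-12)}$ with an implicit constant too large to beat the $\tfrac12$ factor in \eqref{lastconde3}. I will therefore absorb this bad forcing into a virial-corrected unknown
\[
\widetilde{\theta}(t):=\theta(t)+Z(t),\qquad Z(t):=\frac{1}{2\|W\|_{L^2}^2}\big\langle ig(t),A_0(\lambda(t))g(t)\big\rangle,
\]
with $A_0(\lambda)$ the localized virial operator of Lemma \ref{lemproperA}. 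Using the equation \eqref{eqdeg} for $\partial_t g$, the antisymmetry \eqref{properA2}, the nonlinear identity \eqref{properA1}, and the sign estimate \eqref{properA3}, I expect a direct computation of $\dot{Z}$ to yield
\[
\dot{Z}(t)=-\frac{K(t)}{2\lambda(t)^4\|W\|_{L^2}^2}+r(t),\qquad |r(t)|=o\!\left(|t|^{-(N-11)/(N-12)}\right),
\]
once the constant $c$ in Lemma \ref{lemconsq} is chosen small enough: the leftover $\|g\|_{\mathcal{E}}^2/\lambda^4$-type errors get absorbed by the nonpositive coercive contribution $-\lambda^{-4}\|\Delta g\|^2_{L^2(|x|\le R\lambda)}$ coming from \eqref{properA3}, combined with the orthogonality \eqref{ortg} and the coercivity of Lemma \ref{lemcoer}. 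Adding $\dot{Z}$ to \eqref{mode9} produces a purely linear effective equation
\[
\widetilde{\theta}'(t)+\tfrac{C_2}{2\|W\|_{L^2}^2}\theta(t)\lambda(t)^{(N-12)/2}=o\!\left(|t|^{-(N-11)/(N-12)}\right),
\]
and since $\lambda^{(N-12)/2}\asymp|t|^{-1}$ the integrating-factor solution started from $\theta(T)=0$ and $|Z(T)|\lesssim\|g^0\|_{\mathcal{E}}^2\lesssim|T|^{-N/(N-12)}$ yields $|\widetilde{\theta}(t)|\ll|t|^{-1/(N-12)}$. Because Lemma \ref{lemproperA}(i) gives $|Z(t)|\lesssim\|g\|_{\mathcal{E}}^2\lesssim|t|^{-(N-3)/(N-12)}\ll|t|^{-1/(N-12)}$, \eqref{lastconde3} follows. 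The hardest part will be this derivation of $\dot{Z}$: one must verify that the nonlinear contribution from \eqref{eqdeg} reduces, via \eqref{properA1}, to $-K/(2\lambda^4\|W\|_{L^2}^2)$ up to acceptable errors, while the kinetic term $\langle iA_0 g,-i\Delta^2 g\rangle$ supplies via \eqref{properA3} a sign-definite term large enough to absorb the localisation losses of $q$, the bubble-bubble virial cross-interactions, the time-derivative term $\lambda'\partial_\lambda A_0$, and the modulation errors. The careful construction of $q$ in Lemma \ref{lemconsq} and of $A_0$ in Lemma \ref{lemproperA} is precisely tailored to make this cancellation quantitative.

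The bound on $g$ then follows from energy conservation. From $E(u(t))=E(u(T))$ and \eqref{propenere1} applied at $t=T$ with the initial-data bounds from Lemma \ref{lemcondinit} one obtains $|E(u(T))-2E(W)|\lesssim|T|^{-(N-2)/(N-12)}\ll |t|^{-(N-3)/(N-12)}$. Applying \eqref{propenere2} at time $t$ with the now-improved bounds on $\zeta,\mu,\theta$ and the hypothesis \eqref{lastcondai}, every term on the right-hand side is bounded by $\epsilon|t|^{-(N-3)/(N-12)}$ with $\epsilon\to 0$ as $|T_0|\to\infty$, and the signed contribution $-C_0\theta\lambda^{(N-4)/2}$ is likewise $o\!\left(|t|^{-(N-3)/(N-12)}\right)$ by the virial bound on $\theta$ just obtained. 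This closes $\|g(t)\|_{\mathcal{E}}^2\leq\tfrac14|t|^{-(N-3)/(N-12)}$ and hence \eqref{lastconde4}.
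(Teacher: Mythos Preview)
Your treatment of $\zeta$, $\mu$, the stable modes $a_i^-$, and the final energy step for $\|g\|_{\mathcal{E}}$ is correct and matches the paper. The genuine gap is in your handling of $\theta$.

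You claim that the virial correction $Z$ satisfies
\[
\dot Z(t)=-\frac{K(t)}{2\lambda(t)^4\|W\|_{L^2}^2}+r(t),\qquad |r(t)|=o\bigl(|t|^{-(N-11)/(N-12)}\bigr),
\]
a \emph{two-sided} estimate. This is not achievable, because the key input \eqref{properA3} on $\langle A_0(\lambda)g,-\Delta^2 g\rangle$ is an \emph{inequality}, not an approximate identity. The ``nonpositive coercive contribution'' $-\lambda^{-4}\int_{|x|\le R\lambda}|\Delta g|^2$ that you invoke can be as negative as $-C\lambda^{-4}\|g\|_{\mathcal{E}}^2$ with a constant $C$ of order one; it absorbs errors in one direction only. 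Concretely the computation in the paper yields only the upper bound
\[
\tfrac{1}{2}\tfrac{d}{dt}\langle g, iA_0(\lambda)g\rangle \le \tfrac{K}{\lambda^4}+\tfrac{c_1}{2\lambda^4}\|g\|_{\mathcal{E}}^2,
\]
so the corrected quantity $\psi=\theta-\tfrac{1}{4\|W\|_{L^2}^2}\langle g,iA_0(\lambda)g\rangle$ (note the coefficient $1/4$; with your $1/2$ the $K$ terms over-cancel) satisfies only $\psi'(t)\ge -c_1|t|^{-(N-11)/(N-12)}$. Integrating gives the \emph{lower} bound $\theta(t)\ge -c_0|t|^{-1/(N-12)}$, but no upper bound, and your integrating-factor scheme for a two-sided linear ODE does not apply.

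The paper's argument is therefore intrinsically asymmetric. The missing \emph{upper} bound on $\theta$ is obtained from the energy \emph{before} the virial step: in \eqref{propenere2} one simply drops the nonnegative term $\|g\|_{\mathcal{E}}^2$ on the left and uses $|E(u)-2E(W)|\lesssim|T|^{-(N-2)/(N-12)}$ together with the bounds on $a_j^\pm$ to get
\[
C_0\,\theta(t)\,\lambda(t)^{(N-4)/2}\lesssim |t|^{-(N-2)/(N-12)},\qquad\text{hence}\qquad \theta(t)\lesssim |t|^{-2/(N-12)}\ll |t|^{-1/(N-12)}.
\]
This upper bound is in fact also needed \emph{inside} the virial computation, to show that the linear term $-\tfrac{C_2}{2\|W\|_{L^2}^2}\theta\lambda^{(N-12)/2}$ in \eqref{mode9} contributes $o\bigl(|t|^{-(N-11)/(N-12)}\bigr)$ to $\psi'$. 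Once both bounds on $\theta$ are in hand, your energy argument for \eqref{lastconde4} goes through, since only the lower bound $\theta\ge -c_0|t|^{-1/(N-12)}$ is needed to control $-C_0\theta\lambda^{(N-4)/2}$.
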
 

\begin{proof}
Integrating \eqref{mode6} over $[T,t]$ with the initial value $\zeta (T)= -\frac{\pi}{2}$ we obtain
\begin{align*}
\left|\zeta (t)+\dfrac{\pi}{2}\right|&= |\zeta (t) - \zeta (T)| =
\left|\int_T^t \zeta^\prime (s) ds\right|\leq c\int_T^t  |s|^{-\frac{N-5}{N-12}} ds \le
 \frac{c(N-12)}{7}|t|^{-\frac{7}{N-12}}\\
& \le\frac{1}{2}|t|^{-\frac{3}{N-12}}. 
\end{align*}
The estimate \eqref{lastconde2} follows similarly.

Next we show that
\begin{equation}\label{a1_a2_}
|a_1^-(t)|<|t|^{-\frac{N}{2(N-12)}}\quad\text{and}\quad
|a_1^-(t)|<|t|^{-\frac{N}{2(N-12)}}
\end{equation}
for $t\in [T,T_1]$. The claims hold for $t=T$ by \eqref{4-3}.
Let $T_2\in (T,T_1)$ be the last time for which \eqref{a1_a2_} holds for $t\in [T,T_2)$. Suppose, for example, that $a_1^-=|T_2|^{-\frac{N}{2(N-12)}}$. 
Then \eqref{stcoe2} implies that $\frac{d}{dt}a_1^-(T_2)<0$ provided $c<\nu$. This contradicts the assumption that $a_1^-<|T_2|^{-\frac{N}{2(N-12)}}$
for $t<T_2$ (note that $T_2<T_0<0$). The other inequality in \eqref{a1_a2_} follows similarly.

Next we will prove that, for all $c_0>0$,
\begin{equation}\label{(4-18)}
|\theta(t)|\le c_0|t|^{-\frac{1}{N-12}}
\end{equation}
for $t\in [T,T_1]$ provided $|T_0|$ is chosen large enough depending on $c_0$.
By \eqref{propenere1}, \eqref{condinit}, \eqref{eqg0} and the conservation of the energy, we have
\begin{align*}
|E(u)- &2E(W)|=|E(u(T))-2E(W)|\\
&\lesssim \big(|\zeta (T) +\pi /2| +|\mu (T) -1| +|\theta (T)| +\lambda (T) \big)\lambda (T)^{\frac{N-4}{2}} +\|g(T)\|_{\mathcal{E}}^2\\
&=(\lambda^0)^{\frac{N-2}{2}}+\|g^0\|_{\mathcal{E}}^2\\
&\lesssim |T|^{-\frac{N-2}{N-12}} \le |t|^{-\frac{N-2}{N-12}}.
\end{align*}
So, using \eqref{propenere2}, we get
\begin{align*}
\theta \lambda^{\frac{N-4}{2}}& \lesssim \big(|\zeta +\pi/2|+|\mu -1| +|\theta|^3 +\lambda\big) \lambda^{\frac{N-4}{2}}
+|t|^{-\frac{N-2}{N-12}}+|t|^{-\frac{N}{N-12}}\\
&\lesssim |t|^{-\frac{N-2}{N-12}}.
\end{align*}
We deduce that
\begin{equation}\label{4-19}
\theta(t) \lesssim |t|^{-\frac{2}{N-12}}
\ll |t|^{-\frac{1}{N-12}}.
\end{equation}
In order to prove a converse estimate
\begin{equation}\label{4-20}
\theta(t)\ge -c_0|t|^{-\frac{1}{N-12}}
\end{equation}
we define
\[
\psi(t):=\theta (t) - \dfrac{1}{4\|W\|_{L^2}^2} \langle g(t), i A_0 (\lambda (t)) g(t)\rangle.
\]
We claim that 
\begin{equation}\label{4-21}
\psi^\prime (t)\ge -c_1|t|^{-\frac{N-11}{N-12}}
\end{equation}
for $t\in [T,T_1]$ with the constant $c_1>0$ as small as we wish by taking $|T_0|$ large enough.
Taking $|T_0|$ large enough and choosing $c=c_1/4$ in Proposition~\ref{lem3.1} we obtain, by using \eqref{mode9} and \eqref{4-19}, 
\begin{align}\label{4-22}
\psi^\prime &\geq 
-\frac{C_2}{2\|W\|_{L^2}^2}\theta\lambda^{\frac{N-12}{2}}
+\frac{K}{2\lambda^4 \|W\|_{L^2}^2} -\frac{c_1}{4}|t|^{-\frac{N-11}{N-12}} -\frac{1}{4\|W\|_{L^2}^2}\frac{d}{dt} \langle g, i A_0 (\lambda) g\rangle \nonumber\\
&\geq \frac{1}{2\|W\|_{L^2}^2} \left(\frac{K}{\lambda^4} - \frac{1}{2} \frac{d}{dt} \langle g, iA_0 (\lambda ) g \rangle \right) - \frac{c_1}{2}|t|^{-\frac{N-11}{N-12}}.
\end{align}
Hence we need to compute $ \frac{1}{2} \frac{d}{dt} \langle g, iA_0 (\lambda ) g \rangle$ up to terms of order $\ll |t|^{-\frac{N-11}{N-12}}$.
Using that $iA_0 (\lambda)$ is symmetric, we find
\begin{equation}\label{4-23}
\frac{1}{2} \frac{d}{dt} \langle g, iA_0 (\lambda ) g \rangle  = \frac{1}{2}\lambda^\prime \langle g, i\partial_\lambda A_0 (\lambda) g \rangle + \langle \partial_t g , iA_0 (\lambda) g\rangle.
\end{equation}
Since $\|i\lambda \partial_\lambda A_0 (\lambda) g \|_{\dot{H}^{-2}} \lesssim \|g\|_{\mathcal{E}}$, we get that
$$|\lambda^\prime \langle g, i\partial_\lambda A_0 (\lambda) g \rangle| \lesssim \big|\frac{\lambda^\prime}{\lambda}\big| \|g\|_{\mathcal{E}}^2
\ll |t|^{-\frac{N-11}{N-12}} .$$
To estimate the second term in \eqref{4-23} we write
 $\partial_t g$ as in \eqref{eqdeg}. First we estimate 
 $$C_3:=\langle -  \zeta^\prime i e^{i\zeta} W_\mu + \frac{\mu^\prime}{\mu} e^{i\zeta}\Lambda W_\mu -\theta^\prime i e^{i\theta} W_\lambda + \frac{\lambda^\prime}{\lambda} \Lambda W_\lambda , iA_0(\lambda) g \rangle . $$
Since $|\zeta^\prime (t)| +|\frac{\mu^\prime (t)}{\mu (t)}| + |\theta^\prime (t)|+ |\frac{\lambda^\prime (t)}{\lambda (t)}|\lesssim |t|^{-1} $, and $\|A_0 (\lambda) g\|_{\dot{H}^{-2}}\lesssim \|g\|_{\mathcal{E}}$, we get
$$|C_3|\lesssim |t|^{-1}\|g\|_{\mathcal{E}}
\le |t|^{-\frac{3N-27}{2(N-12)}}\ll |t|^{-\frac{N-11}{N-12}}. $$
Hence we have
\begin{align*}
&
\frac{1}{2} \frac{d}{dt} \langle g, iA_0 (\lambda ) g \rangle \\
&\quad= \langle -\Delta^2 g + f(e^{i\zeta} W_\mu +e^{i\theta}W_\lambda +g)- f(e^{i\zeta} W_\mu) - f(e^{i\theta} W_\lambda)  , A_0 (\lambda )g\rangle\\
&\qquad
+o\big(|t|^{-\frac{N-11}{N-12}}\big) .
\end{align*}
Next we notice that the function $A_0 (\lambda ) g$ is supported in the ball of radius $\tilde{R}\lambda$ and in this ball, we have $W_\lambda \ll W_\mu$. Consequently,  
$|f(e^{i\zeta} W_\mu +e^{i\theta}W_\lambda) - f(e^{i\zeta}W_\mu)- f (e^{i\theta}W_\lambda)|\lesssim |W|_\lambda^{\frac{8}{N-4}}$ by Taylor's expansion \eqref{fe2}. On the other hand, we have
$$\|W_\lambda^{\frac{8}{N-4}}\|_{L^2 (|x|\leq \tilde{R} \lambda)}=\lambda^{\frac{N-4}{2}} \|W^{\frac{8}{N-4}}\|_{L^2 (|x|\leq \tilde{R})}\lesssim \lambda^{\frac{N-4}{2}}\lesssim |t|^{-\frac{N-4}{N-12}},$$
and, by Lemma \ref{lemproperA} (i),
 $\|A_0 (\lambda) g\|_{L^2} \lesssim \lambda^{-1}\|g\|_{\mathcal{E}}.$
So, we deduce from the three previous estimates and the Cauchy-Schwarz inequality that
\begin{align*}
\frac{1}{2} \frac{d}{dt} \langle g, i A_0 (\lambda) g\rangle & = \langle - \Delta^2 g + f(e^{i\zeta}W_\mu +e^{i\theta} W_\lambda +g   )-   f(e^{i\zeta}W_\mu +e^{i\theta} W_\lambda ) , A_0 (\lambda )g \rangle\\
&\qquad
+o\big(|t|^{-\frac{N-11}{N-12}}\big) .
\end{align*}

Using \eqref{properA1}, \eqref{properA3} and since $A_0 (\lambda) g = \frac{2}{N\lambda^4}\Delta q \big(\frac{\cdot}{\lambda}\big) g +A(\lambda ) g$,
\begin{align*}
&\frac{d}{dt} \big\langle g, i A_0 (\lambda) g\big\rangle 
\leq \frac{2c_0}{\lambda^4} \|g\|_{\mathcal{E}}^2\\
&- \frac{1}{\lambda^4} \Big(\int_{|x|\leq R\lambda} |\Delta g|^2 dx -\big\langle f(e^{i\zeta}W_\mu +e^{i\theta} W_\lambda +g) - f(e^{i\zeta} W_\mu +e^{i\theta} W_\lambda), \frac{2}{N} \Delta q \big(\frac{\cdot}{\lambda} \big) g\big\rangle  \Big) \\
&-\big\langle A(\lambda) (e^{i\zeta} W_\mu +e^{i\theta} W_\lambda), f (e^{i\zeta} W_\mu +e^{i\theta} W_\lambda+g) - f(e^{i\zeta} W_\mu +e^{i\theta} W_\lambda)\\
&\qquad - f^\prime (e^{i\zeta} W_\mu +e^{i\theta} W_\lambda)g\big\rangle .
\end{align*}
By applying Taylor's expansion \eqref{fe3}, we get 
\begin{align*}
& \| f (e^{i\zeta}W_\mu +e^{i\theta} W_\lambda +g) - f (e^{i\zeta} W_\mu +e^{i\theta} W_\lambda )  - f^\prime (e^{i\zeta} W_\mu +e^{i\theta} W_\lambda) g \|_{L^{\frac{2N}{N+4}}}\\
& \lesssim \|f(g) \|_{L^{\frac{2N}{N+4}}}  \lesssim  \|g\|_{\mathcal{E}}^{\frac{N+4}{N-4}} \ll \|g \|_{\mathcal{E}}.
\end{align*}
From \eqref{fe1}, we also have
\begin{align*}
\|f^\prime (e^{i\zeta} W_\mu +e^{i\theta} W_\lambda) - f^\prime (e^{i\theta} W_\lambda))g \|_{L^{\frac{2N}{N+4}}(|x| \leq \tilde{R} \lambda) } & \lesssim \|f^\prime (e^{i\zeta} W_\mu)\|_{L^{\frac{N}{4}} (|x|\leq \tilde{R} \lambda)} \|g\|_{\mathcal{E}}\\
&\ll \|g\|_{\mathcal{E}}.
\end{align*}
Combining the two previous estimate and since, by Sobolev's embedding, we have $\|\frac{1}{N} \Delta q (\frac{.}{\lambda}) g \|_{L^{\frac{2N}{N-4}}} \lesssim \|g\|_{\mathcal{E}}$, we obtain
\begin{align*}
&\big|\big\langle f(e^{i\zeta}W_\mu +e^{i\theta} W_\lambda +g) - f(e^{i\zeta}W_\mu +e^{i\theta} W_\lambda) , \frac{1}{N}\Delta q \big(\frac{\cdot}{\lambda}\big) g  \big\rangle - \big\langle f^\prime (e^{i\theta}W_\lambda)g , \frac{1}{N}\Delta q \big(\frac{\cdot}{\lambda}\big) g \big\rangle\big|\\
& \ll 
\|g\|_{\mathcal{E}}^2 . 
\end{align*}

Since $\frac{1}{N}\Delta q \big(\frac{x}{\lambda}\big)=1$, for $|x|\leq R \lambda$, and $\|f^\prime (e^{i\theta} W_\lambda)\|_{L^{\frac{N}{4}} (|x|\geq R\lambda)} \ll 1$ for $R$ large enough, we have
$$\big|\big\langle f(e^{i\zeta}W_\mu +e^{i\theta} W_\lambda +g) - f(e^{i\zeta}W_\mu +e^{i\theta} W_\lambda) , \frac{1}{N}\Delta q \big(\frac{\cdot}{\lambda}\big) g  \big\rangle - \langle f^\prime (e^{i\theta}W_\lambda)g ,  g \rangle\big| \ll  |t|^{-\frac{N-3}{N-12}}.$$

By Proposition \ref{propcoer111}, \eqref{a1_a2_}, and the assumption \eqref{lastcondai}, we have, for some constant $c_3>0$ that can be chosen as small as we wish by enlarging $R$, that
$$\int_{|x|\leq R\lambda} |\Delta g|^2 dx - \langle f^\prime (e^{i\theta } W_\lambda) g ,g\rangle \geq - c_3 \|g\|_{\mathcal{E}}^2 $$
 up to some negligible terms.
Hence
\begin{align*}
\frac{2c_0}{\lambda^4} \|g\|_{\mathcal{E}}^2 & - \frac{1}{\lambda^4} \Big(\int_{|x|\leq R\lambda} |\Delta g|^2 dx \\
&\qquad -\big\langle f (e^{i\zeta}W_\mu +e^{i\theta} W_\lambda +g) - f (e^{i\zeta} W_\mu +e^{i\theta} W_\lambda ) , \frac{1}{N} \Delta q \big(\frac{\cdot}{\lambda} \big) g\big\rangle  \Big) \\
& \leq \frac{c_2}{\lambda^4} \|g\|_{\mathcal{E}}^2
\end{align*}
with a positive constant $c_2$ that can be taken as small as we wish by enlarging $R$.

Since the support of $A(\lambda)(e^{i\zeta} W_\mu)$ is contained in $|x|\leq \tilde{R} \lambda$ and 
$\|A(\lambda)(e^{i\zeta} W_\mu)\|_{L^\infty}\lesssim \lambda^{-4}$, so we have
$$\|A(\lambda)(e^{i\zeta} W_\mu)\|_{L^{\frac{2N}{N-4}}} \lesssim (\lambda^N \lambda^{-\frac{4N}{N-4} } )^{\frac{N-4}{2N}}=\lambda^{\frac{N-12}{2}}\approx |t|^{-1}.  $$

On the other hand, by \eqref{fe3}, we get
$$\|f(e^{i\zeta}W_\mu +e^{i\theta} W_\lambda +g) - f (e^{i\zeta}W_\mu +e^{i\theta} W_\lambda) - f^\prime (e^{i\zeta} W_\mu +e^{i\theta} W_\lambda) g \|_{L^{\frac{2N}{N+4}}} \lesssim \|g\|_{\mathcal{E}}^{\frac{N+4}{N-4}}. $$

Observe that
$$|A(\lambda)(e^{i\theta} W_\lambda ) - \frac{1}{\lambda^4} e^{i\theta} \Lambda W_\lambda| \lesssim \frac{W_\lambda}{\lambda^4},$$
and $A(\lambda ) W = \frac{1}{\lambda^4} \Lambda W_\lambda$ for $|x|\leq R \lambda$, so we obtain
\begin{align*}
\big|\big\langle A(\lambda) (e^{i\theta }W_\lambda)- \frac{1}{\lambda^4} e^{i\theta}\Lambda W_\lambda,&\, f(e^{i\zeta}W_\mu +e^{i\theta} W_\lambda +g)- f (e^{i\zeta} W_\mu +e^{i\theta} W_\lambda)\\
&\, - f^\prime (e^{i\zeta} W_\mu +e^{i\theta} W_\lambda)g\big\rangle\big|\\
 \lesssim \frac{1}{\lambda^4} \int_{|x|\geq R\lambda} W_\lambda\big|f(e^{i\zeta}W_\mu &+e^{i\theta} W_\lambda +g)- f (e^{i\zeta} W_\mu +e^{i\theta} W_\lambda)\\
&\, - f^\prime (e^{i\zeta} W_\mu +e^{i\theta} W_\lambda)g \big| dx.
\end{align*}
Since $|\zeta -\theta|\approx \frac{\pi}{2} $, we have $|e^{i\zeta}W_\mu + e^{i\theta}W_\lambda | \gtrsim W_\lambda $, and therefore \eqref{fe3} implies that
$$W_\lambda \big|f(e^{i\zeta}W_\mu +e^{i\theta} W_\lambda +g)- f (e^{i\zeta} W_\mu +e^{i\theta} W_\lambda) - f^\prime (e^{i\zeta} W_\mu +e^{i\theta} W_\lambda)g\big| \lesssim W_\lambda^{\frac{8}{N-4}} |g|^2 .$$
Integrating the last term over the region $|x|\geq R\lambda$, we find overall that
$$\frac{1}{2} \dfrac{d}{dt} \langle g ,iA_0 (\lambda ) g \rangle \leq \frac{c_1}{2\lambda^4 } \|g\|_{\mathcal{E}}^2 +\dfrac{K}{\lambda^4}.$$
So we proved that 
\[
\psi^\prime (t) \geq - c_1 |t|^{-\frac{N-11}{N-12}}.
\]
Since $\theta (T)=0 $, we have $|\psi (T)|\lesssim \|g\|_{\mathcal{E}}^2\ll |t|^{-\frac{1}{N-12}} $. Integration of \eqref{4-21} over $[T,t]$ yields $\psi(t) \gtrsim -c_1|t|^{-\frac{1}{N-1}}$. But, on the other hand,
$$|\langle g(t) , A_0 (\lambda ) g(t) \rangle| \lesssim \|g(t)\|_{\mathcal{E}}^2\le |t|^{-\frac{N-3}{N-12}}
\ll |t|^{-\frac{1}{N-12}}.$$
Hence $\theta (t) \gtrsim -c_1|t|^{-\frac{1}{N-12}}$ that implies \eqref{4-20} provided $c_1>0$ is chosen small enough. We have proved \eqref{lastconde3}.

Finally, from \eqref{propenere2} we get
$\|g\|_{\mathcal{E}}^2 +C_0 \theta \lambda^{\frac{N-4}{2}}\le C|t|^{-\frac{N-2}{N-12}}$, and therefore
\[
\|g\|_{\mathcal{E}}^2 \le C_0 \theta \lambda^{\frac{N-4}{2}} +C|t|^{-\frac{N-2}{N-12}}\le \frac{1}{8}|t|^{-\frac{N-3}{N-12}} +C|t|^{-\frac{N-2}{N-12}}
\]
if the constant $c_0$ in \eqref{(4-18)} is small enough.
This proves \eqref{lastconde4}. 
\end{proof}

In view of the previous proposition, it only remains to control $\lambda$, $a_1^+$ and $a_2^+$. This will be the purpose of the next proposition.
\begin{prop}
\label{Proplast}
Let $T_0<0$ with $|T_0 |$ large enough. For $T<T_0$, there exist $\lambda^0, a_1^0, a_2^0$ satisfying \eqref{condinit} such that the solution with initial data $u(T)=-iW+W_{\lambda_0}+g^0$ exists on the time interval $[T,T_0]$ and for $t\in [T,T_0]$ the bounds \eqref{lastconde1}-\eqref{lastconde4} and
\begin{equation}
\label{lasteqee1}
|\lambda (t) - \tilde{C}|t|^{-\frac{2}{N-12}}|\leq \frac{1}{2} |t|^{-\frac{5}{2(N-12)}},
\end{equation}
\begin{equation}
\label{lasteqee2}
|a_1^+ (t)|\leq \frac{1}{2} |t|^{-\frac{N}{2(N-12)}},
\end{equation}
\begin{equation}
\label{lasteqee3}
|a_2^+ (t)|\leq \frac{1}{2} |t|^{-\frac{N}{2(N-12)}}.
\end{equation}
\end{prop}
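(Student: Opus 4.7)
The plan is to run a three-parameter topological shooting argument, matching the three free initial parameters $(\lambda^0, a_1^0, a_2^0)$ to the three dynamically unstable directions of the bootstrap tube. Proposition~\ref{4-4} has already closed the bootstrap on $\zeta$, $\mu$, $\theta$ and $\|g\|_{\mathcal{E}}$ with a strict factor $1/2$, so the only way a trajectory can exit the tight region is through one of \eqref{lasteqee1}, \eqref{lasteqee2}, \eqref{lasteqee3}. The instabilities come from \eqref{mode8} (for $\lambda$) and from Lemma~\ref{lemmodeai} (for $a_1^+$, $a_2^+$), and there are exactly three of them, matching the dimension of the shooting parameter.

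Concretely, I would rescale to $p = (s_\lambda, s_1, s_2) \in P := [-1,1]^3$ via
\[
\lambda^0 = \tilde C|T|^{-\tfrac{2}{N-12}} + \tfrac{s_\lambda}{2}|T|^{-\tfrac{5}{2(N-12)}}, \qquad a_i^0 = \tfrac{s_i}{2}|T|^{-\tfrac{N}{2(N-12)}},
\]
use Lemma~\ref{lemcondinit} to produce a continuous map $p \mapsto g^0(p) \in X^2$, solve \eqref{eq} with $u(T;p) = -iW + W_{\lambda^0} + g^0(p)$, and let $T^*(p) \in [T,T_0]$ be the supremum of times for which \eqref{lasteqee1}--\eqref{lasteqee3} all hold on $[T,T^*(p)]$. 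Arguing by contradiction, assume $T^*(p) < T_0$ for every $p$, and define $\Phi(p) \in \partial P$ as the rescaled value of $\bigl(\lambda(T^*) - \tilde C|T^*|^{-\tfrac{2}{N-12}}, a_1^+(T^*), a_2^+(T^*)\bigr)$. By Proposition~\ref{4-4}, exit cannot first saturate any of the other bounds, so $\Phi$ does land on $\partial P$.

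The crux is the transversality on $\partial P$. For the coefficients $a_i^+$, equations \eqref{stcoe1}--\eqref{stcoe4} give $\tfrac{d}{dt}a_i^+ = \tfrac{\nu}{\lambda_i^4}a_i^+ + O\bigl(\tfrac{c}{\lambda_i^4}|t|^{-\tfrac{N}{2(N-12)}}\bigr)$; on the face $|a_i^+| = \tfrac12|t|^{-\tfrac{N}{2(N-12)}}$ the leading term points strictly outward and dominates $\tfrac{d}{dt}\bigl(\tfrac12|t|^{-\tfrac{N}{2(N-12)}}\bigr)$ by a polynomial factor in $|t|$, while the $c$-error is absorbed for $|T_0|$ large. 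For $\lambda$, linearizing \eqref{mode8} at $\tilde C|t|^{-\tfrac{2}{N-12}}$ yields, for $\Lambda := \lambda - \tilde C|t|^{-\tfrac{2}{N-12}}$,
\[
\Lambda'(t) = \tfrac{N-10}{(N-12)|t|}\,\Lambda(t) + O\bigl(c|t|^{-\tfrac{2N-19}{2(N-12)}}\bigr),
\]
and on the face $|\Lambda| = \tfrac12|t|^{-\tfrac{5}{2(N-12)}}$ the outward rate equals $\bigl(\tfrac{N-10}{2(N-12)} - \tfrac{5}{4(N-12)}\bigr)|t|^{-\tfrac{2N-19}{2(N-12)}} = \tfrac{2N-25}{4(N-12)}|t|^{-\tfrac{2N-19}{2(N-12)}}$, which is strictly positive precisely because $N \geq 13$, again absorbing the $c$-error.

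The rest is soft. Standard $\dot H^2$ well-posedness for \eqref{eq} (Pausader) gives continuity of $p \mapsto u(\cdot;p)$ on $[T,T_0]$, and strict transversality promotes this to continuity of $T^*$ and of $\Phi$. On $\partial P$, at least one rescaled coordinate already equals $\pm 1$ at $t = T$, so transversality forces $T^*(p) = T$ and $\Phi(p) = p$; consequently $\Phi : P \to \partial P$ would be a continuous retraction, contradicting Brouwer's no-retraction theorem. Hence some $p$ satisfies $T^*(p) = T_0$, yielding the desired $(\lambda^0, a_1^0, a_2^0)$ and the bounds \eqref{lasteqee1}--\eqref{lasteqee3}; \eqref{lastconde1}--\eqref{lastconde4} then follow from Proposition~\ref{4-4}. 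The main obstacle is the transversality for $\lambda$: its principal rate and its bootstrap derivative share the \emph{same} power of $|t|$, forcing a coefficient-level comparison that relies on the arithmetic inequality $2N - 25 > 0$, which is exactly where the dimensional threshold $N \geq 13$ is sharp.
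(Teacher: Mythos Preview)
Your proposal is correct and follows essentially the same approach as the paper: a three-parameter Brouwer no-retraction argument on the unstable directions $(\lambda, a_1^+, a_2^+)$, with transversality at the exit boundary coming from \eqref{mode8} and Lemma~\ref{lemmodeai}. The paper organizes the computation slightly differently---it introduces rescaled coordinates $p_j(t)=X_t^{-1}(\lambda,a_1^+,a_2^+)$ and derives the differential inequality $|p_0'-\tfrac{2N-25}{2(N-12)}|t|^{-1}p_0|\le c|t|^{-1}$ (exactly your linearization), and it handles continuity of $\Phi$ via a cone-trapping lemma rather than a direct appeal to strict transversality---but the content is the same.
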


The proof will be based on the Brouwer fixed point theorem. We will need some preparation before proceeding.
For $t\in [T,T_0]$, $\tilde{\lambda} >0$, $\tilde{a}_i \in \R$, $i=1,2$, we define
$$X_t (\tilde{\lambda} , \tilde{a}_1 , \tilde{a}_2) = (\tilde{C} |t|^{-\frac{2}{N-12}} + \tilde{\lambda} |t|^{-\frac{5}{2(N-12)}}, \tilde{a}_1 |t|^{-\frac{N}{2(N-12)}}, \tilde{a}_2 |t|^{-\frac{N}{2(N-12)}} ).$$
So our parameters satisfy \eqref{lasteqee1}-\eqref{lasteqee3} if and only if
$$X^{-1}_t (\lambda (t) , a_1^+ (t) , a_2^+ (t)) \in Q =[-\frac{1}{2} , \frac{1}{2} ]^3.$$

\begin{lem}
Assume that $\lambda (t), a_1^+ (t)$ and $a_2^+ (t)$ satisfy \eqref{mode8}, \eqref{stcoe1} and \eqref{stcoe3} for $t\in (T_1 ,T_2)\subset [T,T_0]$ and that, for $t\in (T_1 ,T_2)$,
$$\big(p_0(t) ,p_1(t) ,p_2(t)\big) := X_t^{-1} (\lambda (t) , a_1^+ (t) , a_2^+ (t)) \in Q \backslash \partial Q .$$
Then, for all $t\in (T_1,T_2)$, we have
\begin{equation}
\label{blablaree1}
\left|p_0^\prime (t) - \frac{2N-25}{2(N-12)} |t|^{-1} p_0 (t)\right| \leq c |t|^{-1}
\end{equation}
\begin{equation}
\label{blablaree2}
|p_1^\prime (t) - \frac{\nu}{\mu (t)^4} p_1 (t)| \leq \frac{c}{\mu (t)^4}, 
\end{equation}
\begin{equation}
\label{blablaree3}
|p_2^\prime (t) - \frac{\nu}{\lambda (t)^4 } p_2 (t)| \leq \frac{c}{\lambda (t)^4}, 
\end{equation}
where $c>0$ is as small as we want provided that $|T_0|$ is large enough.
\end{lem}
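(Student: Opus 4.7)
The proof is essentially bookkeeping: one substitutes the defining relations
\[
\lambda(t)=\tilde{C}|t|^{-\frac{2}{N-12}}+p_0(t)|t|^{-\frac{5}{2(N-12)}},\qquad a_i^+(t)=p_i(t)|t|^{-\frac{N}{2(N-12)}}\ (i=1,2)
\]
into the differential estimates \eqref{mode8}, \eqref{stcoe1}, \eqref{stcoe3} and reads off the ODEs satisfied by $p_0,p_1,p_2$.

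For $p_0$, my plan is to differentiate the first relation in time (using $\tfrac{d}{dt}|t|^\alpha=-\alpha|t|^{\alpha-1}$ for $t<0$) and compare with \eqref{mode8}, after Taylor-expanding $\lambda(t)^{(N-10)/2}$ in powers of the small parameter $\tilde{C}^{-1}p_0(t)|t|^{-1/(2(N-12))}$. The crucial algebraic input is the identity
\[
\frac{C_1}{2\|W\|_{L^2}^2}\,\tilde{C}^{\frac{N-10}{2}}=\frac{2\tilde{C}}{N-12},
\]
which is immediate from the definition \eqref{1deftildec} of $\tilde{C}$; it forces the leading $|t|^{-(N-10)/(N-12)}$ terms on both sides to cancel exactly. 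The first-order Taylor term then produces, after division by $|t|^{-5/(2(N-12))}$,
\[
p_0'(t)+\tfrac{5}{2(N-12)}p_0(t)|t|^{-1}=\tfrac{N-10}{N-12}p_0(t)|t|^{-1}+\text{error},
\]
with an error of the form $c|t|^{-1}+O\bigl(|t|^{-1-1/(2(N-12))}\bigr)$: the $c|t|^{-1}$ coming from the bound in \eqref{mode8} (with $c$ small for $|T_0|$ large by Proposition \ref{lem3.1}), and the higher order piece coming from the quadratic Taylor remainder. Rearranging gives \eqref{blablaree1} with prefactor $\tfrac{N-10}{N-12}-\tfrac{5}{2(N-12)}=\tfrac{2N-25}{2(N-12)}$.

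For $p_1$ and $p_2$ the same recipe works using \eqref{stcoe1} and \eqref{stcoe3}. Differentiating $a_i^+(t)=p_i(t)|t|^{-N/(2(N-12))}$ and dividing through by $|t|^{-N/(2(N-12))}$ yields, after using the bound on $(a_i^+)'-\nu\mu_i^{-4}a_i^+$ (with $\mu_1=\mu$, $\mu_2=\lambda$),
\[
p_i'(t)-\tfrac{\nu}{\mu_i(t)^4}p_i(t)=-\tfrac{N}{2(N-12)}p_i(t)|t|^{-1}+O\bigl(\mu_i(t)^{-4}\bigr).
\]
For $i=1$, $\mu\approx 1$ and $|t|^{-1}$ is tiny, so both right-hand-side terms sit inside $c\mu^{-4}$ once $|T_0|$ is large enough. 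For $i=2$, $\lambda(t)^{-4}\approx\tilde{C}^{-4}|t|^{8/(N-12)}$ is enormous compared to $|t|^{-1}$, so absorption into $c\lambda^{-4}$ is automatic.

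I do not anticipate any substantive difficulty: the lemma is a change of variables recording that the leading behaviour predicted by the modulation analysis is consistent with the ansatz, with all residual terms smaller than the scales defining $Q$. The only point requiring a bit of care is bookkeeping the two types of error in the $p_0$ computation to confirm that both are dominated by a single small constant $c|t|^{-1}$ on the right of \eqref{blablaree1}.
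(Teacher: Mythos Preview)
Your proposal is correct and follows essentially the same approach as the paper: differentiate the defining relations for $p_0,p_1,p_2$, expand $\lambda^{(N-10)/2}$ to first order in $p_0$, use the identity $\tfrac{C_1}{2\|W\|_{L^2}^2}\tilde{C}^{(N-10)/2}=\tfrac{2\tilde{C}}{N-12}$ to cancel the leading terms, and then absorb the remainders into $c|t|^{-1}$ (resp.\ $c\mu^{-4}$, $c\lambda^{-4}$) via \eqref{mode8}, \eqref{stcoe1}, \eqref{stcoe3}. Your explicit computation of the coefficient $\tfrac{N-10}{N-12}-\tfrac{5}{2(N-12)}=\tfrac{2N-25}{2(N-12)}$ and your observation that $|t|^{-1}\ll\lambda(t)^{-4}$ for the $p_2$ absorption match the paper exactly.
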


\begin{proof}
By definition, we have
$$\lambda (t)= \tilde{C} |t|^{-\frac{2}{N-12}}+ p_0 (t) |t|^{-\frac{5}{2(N-12)}}.$$
So, taking the derivative with respect to $t$, we find
$$\lambda^\prime (t)= \frac{2\tilde{C}}{N-12} |t|^{-\frac{N-10}{N-12}} + \frac{5p_0 (t)}{2(N-12)} |t|^{-\frac{2N-19}{2(N-12)}} + p_0^\prime (t)|t|^{-\frac{5}{2(N-12)}}. $$
By the Newton's binomial formula and since $|p_0| \lesssim 1$, we have
$$\lambda (t)^{\frac{N-10}{2}}= \tilde{C}^{\frac{N-10}{2}} |t|^{-\frac{N-10}{N-12}} + \frac{N-10}{2} \tilde{C}^{\frac{N-12}{2}} |t|^{-1} p_0 (t) |t|^{-\frac{5}{2(N-12)}}+ O(|t|^{-\frac{N-9}{N-12}}). $$
We want to apply Proposition \ref{lem3.1} and therefore we choose $\tilde{C}$ such that 
\begin{equation}\label{defc}
\frac{2\tilde{C}}{N-12}= \frac{C_1 \tilde{C}^{\frac{N-10}{2}}}{2\|W\|_{L^2}^2}.
\end{equation}

We obtain 
\begin{align*}
\lambda^\prime (t) - \frac{C_1}{2 \|W\|_{L^2}^2} \lambda (t)^{\frac{N-10}{2}} &
 = \left(\frac{5}{2(N-12)} - \frac{(N-10)C_1 \tilde{C}^{\frac{N-12}{2}}}{4 \|W\|_{L^2}^2} \right)p_0 (t) |t|^{-\frac{2N-19}{2(N-12)}} \\
 &\ + p_0^\prime (t)|t|^{-\frac{5}{2(N-12)}} + O\big(|t|^{-\frac{N-9}{N-12}}\big). 
\end{align*}
Multiplying both sides by $|t|^{\tfrac{5}{2(N-12)}}$ and applying \eqref{mode8}
we obtain \eqref{blablaree1}.

We have $a_2^+ (t)=  p_2 (t)|t|^{-\tfrac{N}{2(N-12)}}$, so
$$\frac{d}{dt}a_2^+(t) - \frac{\nu}{\lambda(t)^4}a_2^+(t) = |t|^{- \frac{N}{2(N-12)}} \left(p_2^\prime (t) - \frac{\nu}{\lambda(t)^4} p_2 (t)\right) + O\big(|t|^{-\frac{N}{2 (N-12)} -1}\big).$$
Hence
\begin{align*}
\left|p_2^\prime (t)-\frac{\nu}{\lambda(t)^4}\right|
&=\left|\frac{d}{dt}a_2^+(t)-\frac{\nu}{\lambda(t)^4}\right||t|^{\frac{N}{2(N-12)}} +O\big(|t|^{-1}\big)\\
&\le \frac{c}{\lambda(t)^4}+O\big(|t|^{-1}\big)
\end{align*}
by \eqref{stcoe3}. The estimate \eqref{blablaree2} follows similarly.
\end{proof}

From \eqref{defc} we see that 
\[
\tilde{C}=\left(\frac{4\|W\|_{L^2}^2}{(N-12)C_1}\right)^{\frac{2}{N-12}},
\]
where
\[
C_1=\int_{\R^N} W^{\frac{N+4}{N-4}} dx.
\]
justifying the definition \eqref{1deftildec}.

For $C>1$, $j=0,1,2$ and $p=(p_0,p_1,p_2)\in \R^3$, we define
$$V_j (C,p)=\{p+(r_0 ,r_1,r_2): \sign (r_j)= \sign(p_j)\ \text{and} \max_k |r_k|<C|r_j|\}.$$

\begin{lem}\label{4-10}
Assume that $\lambda (t) , a_1^+ (t)$ and $a_2^+ (t)$ satisfy \eqref{mode4}, \eqref{mode8}, \eqref{stcoe1}, \eqref{stcoe3}, and \eqref{lastcondai} for $t\in (T_1 ,T_2)$. There exists a constant $C>0$ depending on $T_1$ and $T_2$ such that if for some $T_3 \in (T_1 ,T_2))$ and $j=0,1,2$, we have $|p_j (T_3)| \geq \frac{1}{4}$, then for all $t\in (T_3,T_2)$ we have $p(t) \in V_j (C, p(T_3))$.
\end{lem}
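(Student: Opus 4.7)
The plan is to recast the three inequalities \eqref{blablaree1}--\eqref{blablaree3} as a diagonal linear ODE system with small forcing, solve each scalar equation by variation of constants, and then compare the three resulting growth factors on $[T_3,T_2]$. Setting $a_0(t):=\tfrac{2N-25}{2(N-12)}|t|^{-1}$, $a_1(t):=\nu/\mu(t)^4$ and $a_2(t):=\nu/\lambda(t)^4$ (all strictly positive under the hypotheses), the three estimates take the unified form $p_k^\prime=a_k p_k+b_k$ with $|b_k|\le c\,a_k$, where $c>0$ can be made as small as desired by enlarging $|T_0|$.

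I would then integrate each equation with the propagator $\Phi_k(t,s):=\exp\bigl(\int_s^t a_k(\sigma)\,d\sigma\bigr)\ge 1$, obtaining $p_k(t)=\Phi_k(t,T_3)p_k(T_3)+\int_{T_3}^t\Phi_k(t,s)b_k(s)\,ds$. The elementary identity $\int_{T_3}^t a_k(s)\Phi_k(t,s)\,ds=\Phi_k(t,T_3)-1$ (obtained from $a_k\Phi_k=-\partial_s\Phi_k$) combined with $|b_k|\le c\,a_k$ then yields
\[
\bigl|r_k(t)-(\Phi_k(t,T_3)-1)p_k(T_3)\bigr|\le c\,(\Phi_k(t,T_3)-1),\qquad r_k:=p_k-p_k(T_3).
\]
Fixing $c\le 1/8$, the hypothesis $|p_j(T_3)|\ge 1/4$ directly gives both $\sign r_j(t)=\sign p_j(T_3)$ and the lower bound $|r_j(t)|\ge\tfrac{1}{8}(\Phi_j(t,T_3)-1)$, whereas the a-priori control $|p_k(T_3)|\le 1$ (from \eqref{mode4} and \eqref{lastcondai}) supplies the matching upper bound $|r_k(t)|\le 2(\Phi_k(t,T_3)-1)$ for every $k$. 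Hence $|r_k(t)|/|r_j(t)|\le 16(\Phi_k(t,T_3)-1)/(\Phi_j(t,T_3)-1)$.

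The main technical point, and where the dependence of $C$ on $T_1,T_2$ enters, will be bounding this ratio uniformly on $(T_3,T_2)$. On the bounded interval $[T_1,T_2]$ each coefficient $a_k$ is pinched between two strictly positive constants $\alpha_k\le a_k(t)\le\beta_k$ depending only on $T_1,T_2$ (by the explicit form of $a_0$ and the bootstrap bounds on $\mu,\lambda$), so with $h:=t-T_3\in[0,T_2-T_1]$,
\[
\frac{\Phi_k(t,T_3)-1}{\Phi_j(t,T_3)-1}\le\frac{e^{\beta_k h}-1}{e^{\alpha_j h}-1}.
\]
The only slightly delicate issue is that both sides vanish as $h\to 0^+$; however, the right-hand side extends continuously to $h=0$ with value $\beta_k/\alpha_j$ (by L'H\^opital), and is therefore bounded on $[0,T_2-T_1]$ by a constant depending only on $T_1,T_2$. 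Taking $C$ to be sixteen times the maximum of these bounds over $j,k\in\{0,1,2\}$ then completes the argument; conceptually, once each coordinate has been decoupled into a scalar linear equation, both the maximum displacement and the minimum displacement are controlled by the same propagator $\Phi_k-1$, so the comparison reduces to an elementary scalar estimate.
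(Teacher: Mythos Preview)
Your argument is correct, but the paper's own proof is considerably shorter. Rather than integrating each equation with its propagator, the paper simply extracts from \eqref{blablaree1}--\eqref{blablaree3} two pointwise facts valid on $(T_1,T_2)$: there are constants $c_1,C_1>0$ (depending on $T_1,T_2$) such that $|p_k'(t)|\le C_1$ for all $k$, and whenever $|p_j(t)|\ge\tfrac14$ one has $|p_j'(t)|\ge c_1$ with $\sign p_j'(t)=\sign p_j(t)$. The second fact forces $|p_j|$ to be nondecreasing once it exceeds $\tfrac14$, so $|p_j(t)|\ge\tfrac14$ for all $t\ge T_3$; hence $r_j(t)=\int_{T_3}^t p_j'$ has the sign of $p_j(T_3)$ and $|r_j(t)|\ge c_1(t-T_3)$, while $|r_k(t)|\le C_1(t-T_3)$ for every $k$. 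Taking $C>C_1/c_1$ finishes the proof in one line.

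Your variation-of-constants route reaches the same conclusion by comparing the propagators $\Phi_k(t,T_3)-1$, which are themselves comparable to $t-T_3$ once the $a_k$ are pinched between positive constants on $[T_1,T_2]$. So the two arguments are equivalent in substance; the paper's version just skips the exponential machinery by working directly with the derivatives.
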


\begin{proof}
Using the previous lemma, we see that there exist constants $c_1,C_1>0$ depending on $T_1$ and $T_2$ such that $|p_j^\prime (t)|\leq C_1$ and $|p_j (t)|\geq \frac{1}{4}$ implies that $|p_j^\prime (t)|\geq c_1$ and sign $p_j^\prime (t)=$ sign $p_j (t)$. Then it suffices to take $C>\frac{C_1}{c_1}$. 
\end{proof}

\begin{proof}[Proof of Proposition \ref{Proplast}]
Suppose that the result does not hold. We will construct a continuous retraction $\Phi : Q \rightarrow \partial Q$, $\Phi (p)=p$, for $p\in \partial Q$. However, this leads to a contradiction since by the Brouwer fixed point theorem 
such a map cannot exist.

Let $p^0 \in Q$. Take $(\lambda^0 , {a}_1^0 , {a}_2^0 )= X_T (p^0)$ and let $g^0$ be given by Lemma \ref{lemcondinit}. Let $u: [T,T_+) \rightarrow \mathcal{E}$ be the solution of \eqref{eq} for the initial data $u(T)=-iW +W_{\lambda^0}+g^0$. We will say that the solution $u$ is associated with $p^0 \in Q$.

Let $T_2$ be the infimum of the values of $t\in [T,T_+)$ such that at least one of the conditions \eqref{lastconde1}-\eqref{lastconde4}, \eqref{lasteqee1}, \eqref{lasteqee2} or \eqref{lasteqee3} does not hold. By counter assumption Proposition \ref{Proplast} is false, so $T_2$  exists and $T_2<T_0$. Indeed, if all the conditions were true for $t\in [T,T_+)$, then $T_+>T_0$ and hence all the conditions would hold on $[T,T_0]$.

Set $p^1 = X_{T_2}^{-1} (\lambda (T_2) , a_1^+ (T_2) ,a_2^+ (T_2))$. By continuity $p^1 \in Q$ and we will show that $p^1 \in \partial Q$. By continuity of the flow, the assumptions of Proposition \ref{Proplast} are satisfied for $T_1 =T_2  +\tau$ for some $\tau>0$. Hence \eqref{lastconde1}-\eqref{lastconde4} hold on $[T_2 , T_2 +\tau]$, so at least one of the conditions \eqref{lasteqee1}, \eqref{lasteqee2} or \eqref{lasteqee3} is false somewhere on $[T_2 , T_2+\tau]$ for every $\tau>0$. By continuity of the parameters with respect to time, this yields that $p^1 \in \partial Q$.

 We set
$$\Phi : Q \rightarrow \partial Q,\ \Phi (p^0)=p^1.$$
We see from the definition that $\Phi (p)=p$ for $p\in \partial Q$. Next, let us show that $\Phi$ is continuous.

Let $p^0 \in Q$, $\Phi (p^0) =p^1 \in \partial Q$ and $\varepsilon >0$. Let $C$ be the constant defined in Lemma \ref{4-10} for $T_1=T$ and $T_2=T_0$. Without loss of generality, we suppose that $p_0^1 = \frac{1}{2}$. For $\delta>0$ small enough, $V_\delta = V_0 (C, \frac{1}{2}-\delta , p_1^1 ,p_2^1)\cap \partial Q$ is an $\varepsilon$-neighborhood of $p^1$. So, from Lemma \ref{4-10}, we only need to show that if $q^0 \in Q$ with $|q^0 -p^0|$ small enough, then the solution associated with $q^0$ passes through $V_\delta$.

If $p^0=p^1 \in \partial Q$, this holds since $V_\delta$ is a neighborhood of $p^0$. If $p^0 \in Q \backslash \partial Q$, the solution associated with $p^0$ passes through $V_\delta$ before reaching $\partial Q$. Thus by continuous dependence on the initial data the solution associated with $q^0$ has to pass through $V_\delta$ if $|q^0 -p^0|$ is small enough.

\end{proof}
We are finally in position to prove Theorem \ref{main}.

\begin{proof}[Proof of Theorem \ref{main}]
Let $T_0<0$ be given by Proposition \ref{Proplast} and let $T_0,T_1,T_2,\ldots$ be a decreasing sequence tending to $-\infty$. For $n\geq 1$, let $u_n$ be the solution given by Proposition \ref{Proplast}. Then \eqref{lastconde1}-\eqref{lastconde4}, and \eqref{lasteqee1} yield
\begin{equation}
\label{verylasteq}
\|u_n (t) - (-iW +W_{\tilde{C} |t|^{-2/(N-12)}})\|_{\mathcal{E}}\lesssim |t|^{-\frac{1}{2(N-12)}},
\end{equation}
for all $t\in [T_n ,T_0]$ and with a constant independent of $n$. Passing to a subsequence if necessary, we can assume that $u_n (T_0) \rightharpoonup u_0 \in \mathcal{E}$. Let $u$ be the solution of \eqref{eq} with initial condition $u(T_0)=u_0$. Using nonlinear profile decomposition (see \cite{MR3462127} for the linear one and we refer to \cite{MR2781926} Proposition $2.8$ and  \cite{MR2654778} proof of Lemma $3.2$ for the nonlinear one), we can prove proceeding as \cite{MR3904767} that there exists a constant $\eta>0$ such that the following holds. Let $K\subset \mathcal{E}$ be a compact set and let $u_n : [T_1, T_2] \rightarrow \mathcal{E}$ be a sequence of solutions of \eqref{eq} such that
$$\text{dist} (u_n (t) , K)\leq \eta , \ \text{for all } n\in \N \ \text{and}\ t\in [T_1 ,T_2].$$
Suppose that $u_n (T_1) \rightharpoonup u_0 \in \mathcal{E}$. Then the solution $u(t)$ of \eqref{eq} with initial data $u(T_1)=u_0$ is defined for $t\in [T_1 ,T_2]$ and 
$$u_n (t) \rightharpoonup u(t)\ \text{ for all}\  t\in [T_1 ,T_2].$$
Applying the previous result, we see that $u$ exists on the time interval $(-\infty , T_0]$ and for all $t\in (-\infty ,T_0]$, we have $u_n (t) \rightharpoonup u(t)$. Passing to the weak limit in \eqref{verylasteq}, we finish the proof.
\end{proof}




\begin{thebibliography}{10}

\bibitem{MR1745182}
Matania Ben-Artzi, Herbert Koch, and Jean-Claude Saut.
\newblock Dispersion estimates for fourth order {S}chr\"{o}dinger equations.
\newblock {\em C. R. Acad. Sci. Paris S\'{e}r. I Math.}, 330(2):87--92, 2000.

\bibitem{MR3855391}
Denis Bonheure, Jean-Baptiste Casteras, Ederson~Moreira dos Santos, and Robson
  Nascimento.
\newblock Orbitally stable standing waves of a mixed dispersion nonlinear
  {S}chr\"{o}dinger equation.
\newblock {\em SIAM J. Math. Anal.}, 50(5):5027--5071, 2018.

\bibitem{MR3976588}
Denis Bonheure, Jean-Baptiste Casteras, Tianxiang Gou, and Louis Jeanjean.
\newblock Normalized solutions to the mixed dispersion nonlinear
  {S}chr\"{o}dinger equation in the mass critical and supercritical regime.
\newblock {\em Trans. Amer. Math. Soc.}, 372(3):2167--2212, 2019.

\bibitem{MR4001029}
Denis Bonheure, Jean-Baptiste Cast\'{e}ras, Tianxiang Gou, and Louis Jeanjean.
\newblock Strong instability of ground states to a fourth order
  {S}chr\"{o}dinger equation.
\newblock {\em Int. Math. Res. Not. IMRN}, (17):5299--5315, 2019.

\bibitem{MR3977892}
Denis Bonheure, Jean-Baptiste Casteras, and Rainer Mandel.
\newblock On a fourth-order nonlinear {H}elmholtz equation.
\newblock {\em J. Lond. Math. Soc. (2)}, 99(3):831--852, 2019.

\bibitem{bou-lenz}
Thomas Boulenger and Enno Lenzmann.
\newblock Blowup for biharmonic {NLS}.
\newblock {\em Ann. Sci. \'{E}c. Norm. Sup\'{e}r. (4)}, 50(3):503--544, 2017.

\bibitem{CSZ}
Daomin Cao, Yiming Su, and Deng Zhang.
\newblock On Uniqueness of Multi-bubble Blow-Up Solutions and Multi-solitons to $L^2$-Critical Nonlinear Schr\" odinger Equations.
\newblock {\em Arch. Rational Mech. Anal.} 247:4 (2023).

\bibitem{CF}
Jean-Baptiste Casteras and Juraj F\"{o}ldes.
\newblock Existence of travelling waves for a fourth order {S}chr\"{o}dinger
  equation with mixed dispersion in the {H}elmholtz regime.
\newblock {\em Preprint arXiv:2103.11440}, 2021.

\bibitem{CH}
Jean-Baptiste Casteras and Ilkka Holopainen.
\newblock Fourth order {S}chr\"{o}dinger equation with mixed dispersion on
  certain {C}artan-{H}adamard manifolds.
\newblock {\em J. Dynam. Differential Equations}.
\newblock To appear.

\bibitem{CDKM}
Charles Collot, Thomas Duyckaerts, Carlos Kenig, and Frank Merle.
\newblock Soliton resolution for the radial quadratic wave equation in six space dimensions.
\newblock {\em Preprint arXiv:2201.01848.}

\bibitem{CoteDu}
Rapha\" el C\^ ote and Haiming Du.
\newblock Construction of multi solitary waves with symmetry for the damped nonlinear Klein-Gordon equation.
\newblock {\em Preprint arXiv:2411.11703}, 2024.


\bibitem{CMM}
Rapha\" el C\^ ote, Yvan Martel, and Frank Merle.
\newblock Construction of multi-soliton solutions for the $L^2$-supercritical gKdV and NLS equations.
\newblock {\em Rev. Mat. Iberoam.} 27 (2011), no. 1, pp. 273-302.


\bibitem{MR4397184}
 Thomas Duyckaerts, Carlos Kenig, Yvan Martel and Frank Merle.
\newblock Soliton resolution for critical co-rotational wave maps and radial cubic wave equation.
\newblock {\em Comm. Math. Phys.}, 391(2):779-871, 2022.

\bibitem{DKMfirst}
Thomas Duyckaerts, Carlos Kenig, and Frank Merle.
\newblock Classification of the radial solutions of the focusing, energy-critical wave equation. 
\newblock {\em Camb. J. Math.} 1, No. 1, 75-144, 2013.

\bibitem{DKMActa}
Thomas Duyckaerts, Carlos Kenig, and Frank Merle.
\newblock Soliton resolution for the radial critical wave equation in all odd
  space dimensions.
\newblock {\em Acta Math.}
\newblock To appear.

\bibitem{MR2781926}
Thomas Duyckaerts, Carlos Kenig, and Frank Merle.
\newblock Universality of blow-up profile for small radial type {II} blow-up
  solutions of the energy-critical wave equation.
\newblock {\em J. Eur. Math. Soc. (JEMS)}, 13(3):533--599, 2011.

\bibitem{MR2972605}
Thomas Duyckaerts, Carlos Kenig, and Frank Merle.
\newblock Profiles of bounded radial solutions of the focusing, energy-critical
  wave equation.
\newblock {\em Geom. Funct. Anal.}, 22(3):639--698, 2012.

\bibitem{MR4163362}
Thomas Duyckaerts, Carlos Kenig, and Frank Merle.
\newblock Exterior energy bounds for the critical wave equation close to the
  ground state.
\newblock {\em Comm. Math. Phys.}, 379(3):1113--1175, 2020.

\bibitem{MR4289254}
Thomas Duyckaerts, Carlos Kenig, and Frank Merle.
\newblock Decay estimates for nonradiative solutions of the energy-critical
  focusing wave equation.
\newblock {\em J. Geom. Anal.}, 31(7):7036--7074, 2021.

\bibitem{MR2491692}
Thomas Duyckaerts and Frank Merle.
\newblock Dynamic of threshold solutions for energy-critical {NLS}.
\newblock {\em Geom. Funct. Anal.}, 18(6):1787--1840, 2009.

\bibitem{MR1898529}
Gadi Fibich, Boaz Ilan, and George Papanicolaou.
\newblock Self-focusing with fourth-order dispersion.
\newblock {\em SIAM J. Appl. Math.}, 62(4):1437--1462, 2002.

\bibitem{MR3462127}
Qing Guo.
\newblock Scattering for the focusing {$L^2$}-supercritical and
  {$\dot{H}^2$}-subcritical biharmonic {NLS} equations.
\newblock {\em Comm. Partial Differential Equations}, 41(2):185--207, 2016.

\bibitem{JJ}
Jacek Jendrej.
\newblock Construction of two-bubble solutions for the energy-critical {NLS}.
\newblock {\em Anal. PDE}, 10(8):1923--1959, 2017.

\bibitem{MR3579128}
Jacek Jendrej.
\newblock Construction of type {II} blow-up solutions for the energy-critical
  wave equation in dimension 5.
\newblock {\em J. Funct. Anal.}, 272(3):866--917, 2017.

\bibitem{MR3801295}
Jacek Jendrej.
\newblock Nonexistence to two-bubbles with opposite signs for the radial
  energy-critical wave equation.
\newblock {\em Ann. Sc. Norm. Super. Pisa Cl. Sci. (5)}, 18(2):735--778, 2018.

\bibitem{MR3904767}
Jacek Jendrej.
\newblock Construction of two-bubble solutions for energy-critical wave
  equations.
\newblock {\em Amer. J. Math.}, 141(1):55--118, 2019.

\bibitem{MR3842064}
Jacek Jendrej and Andrew Lawrie.
\newblock Two-bubble dynamics for threshold solutions to the wave maps
  equation.
\newblock {\em Invent. Math.}, 213(3):1249--1325, 2018.

\bibitem{JLsoli}
Jacek Jendrej and Andrew Lawrie.
\newblock Soliton resolution for the energy-critical nonlinear wave equation in
  the radial case.
\newblock {\em Preprint arXiv:2203.09614}, 2022.

\bibitem{MR4458521}
Jacek Jendrej, Andrew Lawrie, and Casey Rodriguez.
\newblock Dynamics of bubbling wave maps with prescribed radiation.
\newblock {\em Ann. Sci. \'{E}c. Norm. Sup\'{e}r. (4)}, 55(3):1135--1198, 2022.


\bibitem{JM}
Jacek Jendrej and Yvan Martel
\newblock Construction of multi-bubble solutions for the energy-critical
wave equation in dimension $5$
\newblock {\em J. Math. Pures. Appl. } 139 (2020): 317-355.


\bibitem{MR1779828}
V.~I. Karpman and A.~G. Shagalov.
\newblock Stability of solitons described by nonlinear {S}chr\"{o}dinger-type
  equations with higher-order dispersion.
\newblock {\em Phys. D}, 144(1-2):194--210, 2000.

\bibitem{MR2654778}
Rowan Killip and Monica Visan.
\newblock The focusing energy-critical nonlinear {S}chr\"{o}dinger equation in
  dimensions five and higher.
\newblock {\em Amer. J. Math.}, 132(2):361--424, 2010.

\bibitem{MR2372807}
J.~Krieger, W.~Schlag, and D.~Tataru.
\newblock Renormalization and blow up for charge one equivariant critical wave
  maps.
\newblock {\em Invent. Math.}, 171(3):543--615, 2008.

\bibitem{MR2494455}
Joachim Krieger, Wilhelm Schlag, and Daniel Tataru.
\newblock Slow blow-up solutions for the {$H^1(\Bbb R^3)$} critical focusing
  semilinear wave equation.
\newblock {\em Duke Math. J.}, 147(1):1--53, 2009.

\bibitem{MR1694339}
Guozhen Lu and Juncheng Wei.
\newblock On a {S}obolev inequality with remainder terms.
\newblock {\em Proc. Amer. Math. Soc.}, 128(1):75--84, 2000.

\bibitem{MR3114854}
C.~Ortoleva and G.~Perelman.
\newblock Nondispersive vanishing and blow up at infinity for the energy
  critical nonlinear {S}chr\"{o}dinger equation in {$\Bbb R^3$}.
\newblock {\em Algebra i Analiz}, 25(2):162--192, 2013.

\bibitem{MR2502523}
Benoit Pausader.
\newblock The cubic fourth-order {S}chr\"{o}dinger equation.
\newblock {\em J. Funct. Anal.}, 256(8):2473--2517, 2009.

\bibitem{MR2505703}
Beno\^{\i}t Pausader.
\newblock The focusing energy-critical fourth-order {S}chr\"{o}dinger equation
  with radial data.
\newblock {\em Discrete Contin. Dyn. Syst.}, 24(4):1275--1292, 2009.

\bibitem{raphael2011existence}
Pierre Rapha\" el and Jeremie Szeftel.
 \newblock  Existence and uniqueness of minimal blow-up solutions to an inhomogeneous mass critical NLS.
\newblock {\em J. Amer. Math. Soc.} 24 (2011), 471-546.


\bibitem{Xu}
Xu Yuan.
\newblock Construction of excited multi-solitons for the focusing 4D cubic wave equation.
\newblock {\em J. Funct. Anal.}, 282 (2022) 109336.

\end{thebibliography}

\end{document}